\let\th@plain\relax
\pgfplotsset{compat=newest}
\DeclareFontFamily{U}{MnSymbolA}{}
\DeclareFontShape{U}{MnSymbolA}{m}{n}{
    <-6> MnSymbolA5
    <6-7> MnSymbolA6
    <7-8> MnSymbolA7
    <8-9> MnSymbolA8
    <9-10> MnSymbolA9
    <10-12> MnSymbolA10
    <12-> MnSymbolA12
}{}
\DeclareFontShape{U}{MnSymbolA}{b}{n}{
    <-6> MnSymbolA-Bold5
    <6-7> MnSymbolA-Bold6
    <7-8> MnSymbolA-Bold7
    <8-9> MnSymbolA-Bold8
    <9-10> MnSymbolA-Bold9
    <10-12> MnSymbolA-Bold10
    <12-> MnSymbolA-Bold12
}{}
\DeclareSymbolFont{MnSymA}{U}{MnSymbolA}{m}{n}
\DeclareMathSymbol{\lcirclearrowright}{\mathrel}{MnSymA}{252}
\DeclareMathSymbol{\lcirclearrowdown}{\mathrel}{MnSymA}{255}
\DeclareMathSymbol{\rcirclearrowleft}{\mathrel}{MnSymA}{250}
\DeclareMathSymbol{\rcirclearrowdown}{\mathrel}{MnSymA}{251}
\DeclareFontFamily{U}{MnSymbolC}{}
\DeclareSymbolFont{MnSyC}{U}{MnSymbolC}{m}{n}
\DeclareFontShape{U}{MnSymbolC}{m}{n}{
    <-6>  MnSymbolC5
    <6-7>  MnSymbolC6
    <7-8>  MnSymbolC7
    <8-9>  MnSymbolC8
    <9-10> MnSymbolC9
    <10-12> MnSymbolC10
    <12->   MnSymbolC12%
}{}
\DeclareMathSymbol{\powerset}{\mathord}{MnSyC}{180}
\DeclareMathSymbol{\righthalfcap}{\mathbin}{MnSyC}{186}
\DeclareMathAlphabet{\mathpzc}{OT1}{pzc}{m}{it}
\def\boolwahr{true}
\def\boolfalsch{false}
\def\boolleer{}
\let\boolinappendix\boolfalsch
\let\boolinmdframed\boolfalsch
\let\eqtagset\boolfalsch
\let\eqtaglabel\boolleer
\let\eqtagsymb\boolleer
\newlength\rtab
\newlength\gesamtlinkerRand
\newlength\gesamtrechterRand
\newlength\ownspaceabovethm
\newlength\ownspacebelowthm
\def\secnumberingpt{.}
\def\secnumberingseppt{.}
\def\subsecnumberingseppt{}
\def\thmnumberingpt{.}
\def\thmnumberingseppt{}
\def\thmForceSepPt{.}
\definecolor{leer}{gray}{1}
\definecolor{boxgrau}{gray}{0.85}
\definecolor{dunkelgrau}{gray}{0.5}
\definecolor{maroon}{rgb}{0.6901961,0.1882353,0.3764706}
\definecolor{dunkelgruen}{rgb}{0.015625,0.363281,0.109375}
\definecolor{dunkelrot}{rgb}{0.5450980392,0,0}
\definecolor{dunkelblau}{rgb}{0,0,0.5450980392}
\definecolor{blau}{rgb}{0,0,1}
\definecolor{newresult}{rgb}{0.6,0.6,0.6}
\definecolor{improvedresult}{rgb}{0.9,0.9,0.9}
\definecolor{hervorheben}{rgb}{0,0.9,0.7}
\definecolor{starkesblau}{rgb}{0.1019607843,0.3176470588,0.8156862745}
\definecolor{achtung}{rgb}{1,0.5,0.5}
\definecolor{frage}{rgb}{0.5,1,0.5}
\definecolor{schreibweise}{rgb}{0,0.7,0.9}
\definecolor{axiom}{rgb}{0,0.3,0.3}
\definecolor{drawing_light_grey}{gray}{0.85}
\def\let@name#1#2{
    \expandafter\let\csname #1\expandafter\endcsname\csname #2\endcsname\relax
}
\DeclareRobustCommand\crfamily{\fontfamily{ccr}\selectfont}
\DeclareTextFontCommand{\textcr}{\crfamily}
\def\ifthenelseleer#1#2#3{\ifthenelse{\equal{#1}{}}{#2}{#1#3}}
\def\bedingtesspaceexpand#1#2#3{\ifthenelseleer{\csname #1\endcsname}{#3}{#2#3}}
\def\hraum{\null\hfill\null}
\def\nvraum{\@ifnextchar\bgroup{\nvraum@c}{\nvraum@bes}}
    \def\nvraum@c#1{\vspace*{-#1\baselineskip}}
    \def\nvraum@bes{\vspace*{-\baselineskip}}
\def\erlaubeplatz{\relax\ifmmode\else\@\xspace\fi}
\def\entferneplatz{\relax\ifmmode\else\expandafter\@gobble\fi}
\def\send@toaux#1{\@bsphack\protected@write\@auxout{}{\string#1}\@esphack}
\def\rlabel#1[#2]#3#4#5{#5\rlabel@aux{#1}[#2]{#3}{#4}{#5}}
    \def\rlabel@aux#1[#2]#3#4#5{%
        \send@toaux{\newlabel{#1}{{\@currentlabel}{\thepage}{{\unexpanded{#5}}}{#2.\csname the#2\endcsname}{}}}\relax%
    }
\def\tag@rawscheme#1#2[#3]#4#5{\@ifnextchar[{\tag@rawscheme@{#1}{#2}[#3]{#4}{#5}}{\tag@rawscheme@{#1}{#2}[#3]{#4}{#5}[*]}}
    \def\tag@rawscheme@#1#2[#3]#4#5[#6]{\@ifnextchar\bgroup{\tag@rawscheme@@{#1}{#2}[#3]{#4}{#5}[#6]}{\tag@rawscheme@@{#1}{#2}[#3]{#4}{#5}[#6]{}}}
    \def\tag@rawscheme@@#1#2[#3]#4#5[#6]#7{%
        \ifthenelse{\equal{#6}{*}}{%
            \ifthenelse{\equal{#7}{\boolleer}}{\refstepcounter{#3}#4\csname the#3\endcsname#5}{#4#7#5}%
        }{%
            \refstepcounter{#3}#4%
            \ifthenelse{\equal{#7}{\boolleer}}{\rlabel{#6}[#3]{#1}{#2}{\csname the#3\endcsname}}{\rlabel{#6}[#3]{#1}{#2}{#7}}%
            #5%
        }%
    }
\def\tag@scheme#1#2[#3]{\tag@rawscheme{#1}{#2}[#3]{\upshape(}{\upshape)}}
\def\eqtag@post#1{\makebox[0pt][r]{#1}}
\def\eqtag@pre{\tag@scheme{Eq}{Equation}[Xe]}
\def\eqtag{\@ifnextchar[{\eqtag@}{\eqtag@[*]}}
    \def\eqtag@[#1]{\@ifnextchar\bgroup{\eqtag@@[#1]}{\eqtag@@[#1]{}}}
    \def\eqtag@@[#1]#2{\eqtag@post{\eqtag@pre[#1]{#2}}}
\def\eqcref#1{\text{(\ref{#1})}}
\def\punktlabel#1{\label{it:#1:\beweislabel}}
\def\punktcref#1{\eqcref{it:#1:\beweislabel}}
\def\opfromto[#1]_#2^#3{\underset{#2}{\overset{#3}{#1}}}
\def\textoverset#1#2{\overset{\text{#1}}{#2}}
\def\eqcrefoverset#1#2{\textoverset{\eqcref{#1}}{#2}}
\def\mathclap#1{#1}
\def\oberunterset#1{\@ifnextchar^{\oberunterset@oben{#1}}{\oberunterset@unten{#1}}}
    \def\oberunterset@oben#1^#2_#3{\underset{\mathclap{#3}}{\overset{\mathclap{#2}}{#1}}}
    \def\oberunterset@unten#1_#2^#3{\underset{\mathclap{#2}}{\overset{\mathclap{#3}}{#1}}}
    \def\breitunderbrace#1_#2{\underbrace{#1}_{\mathclap{#2}}}
    \def\breitoverbrace#1^#2{\overbrace{#1}^{\mathclap{#2}}}
    \def\breitunderbracket#1_#2{\underbracket{#1}_{\mathclap{#2}}}
    \def\breitoverbracket#1^#2{\overbracket{#1}^{\mathclap{#2}}}
\def\generatenestedsecnumbering#1#2#3{%
    \expandafter\gdef\csname thelong#3\endcsname{%
        \expandafter\csname the#2\endcsname%
        \secnumberingpt%
        \expandafter\csname #1\endcsname{#3}%
    }%
    \expandafter\gdef\csname theshort#3\endcsname{%
        \expandafter\csname #1\endcsname{#3}%
    }%
}
\def\generatenestedthmnumbering#1#2#3{%
    \expandafter\gdef\csname the#3\endcsname{%
        \expandafter\csname the#2\endcsname%
        \thmnumberingpt%
        \expandafter\csname #1\endcsname{#3}%
    }%
    \expandafter\gdef\csname theshort#3\endcsname{%
        \expandafter\csname #1\endcsname{#3}%
    }%
}
\providecommand{\setcounternach}{}
\renewcommand{\setcounternach}[2]{\setcounter{#1}{#2}\addtocounter{#1}{-1}}
\def\forcepunkt#1{#1\IfEndWith{#1}{.}{}{.}}
\def\lateinabkuerzung#1#2{%
    \expandafter\gdef\csname #1\endcsname{\emph{#2}\@ifnextchar.{\entferneplatz}{\erlaubeplatz}}
}
\def\deutscheabkuerzung#1#2{%
    \expandafter\gdef\csname #1\endcsname{{#2}\@ifnextchar.{\entferneplatz}{\erlaubeplatz}}
}
\def\matrix#1{\left(\begin{array}{#1}}
    \def\endmatrix{\end{array}\right)}
\def\smatrix{\left(\begin{smallmatrix}}
    \def\endsmatrix{\end{smallmatrix}\right)}
\def\multiargrekursiverbefehl#1#2#3#4#5#6#7#8{%
    \expandafter\gdef\csname#1\endcsname #2##1#4{\csname #1@anfang\endcsname##1#3\egroup}
    \expandafter\def\csname #1@anfang\endcsname##1#3{#5##1\@ifnextchar\egroup{\csname #1@ende\endcsname}{#7\csname #1@mitte\endcsname}}
    \expandafter\def\csname #1@mitte\endcsname##1#3{#6##1\@ifnextchar\egroup{\csname #1@ende\endcsname}{#7\csname #1@mitte\endcsname}}
    \expandafter\def\csname #1@ende\endcsname##1{#8}
}
\def\BeweisRichtung[#1]{\@ifnextchar\bgroup{\@BeweisRichtung@c[#1]}{\@BeweisRichtung@bes[#1]}}
    \def\@BeweisRichtung@bes[#1]{{\bfseries (#1)}}
    \def\@BeweisRichtung@c[#1]#2#3{#2~#1~#3}
\def\erzeugeBeweisRichtungBefehle#1#2{
    \expandafter\gdef\csname #1text\endcsname##1##2{\BeweisRichtung[#2]{##1}{##2}}
    \expandafter\gdef\csname #1\endcsname{%
        \@ifnextchar\bgroup{\csname #1@\endcsname}{\csname #1text\endcsname{}{}}%
    }
    \expandafter\gdef\csname #1@\endcsname##1##2{%
        \csname #1text\endcsname{\punktcref{##1}}{\punktcref{##2}}%
    }
}
\def\cal#1{\mathcal{#1}}
\def\brkt#1{\langle{}#1{}\rangle}
\def\mathfrak#1{\mbox{\usefont{U}{euf}{m}{n}#1}}
\def\rectangleblack{\text{\RectangleBold}}
\def\squarewhite{\Box}
\def\crefname@full#1#2#3#4#5{%
    \crefname{#1}{#2}{#3}
    \Crefname{#1}{#4}{#5}
}
\def\crefname@fullmod#1#2#3#4#5{%
    \crefname@full{#1}{#2}{#3}{#4}{#5}
    \crefname@full{#1@basic}{#2}{#3}{#4}{#5}
    \crefname@full{#1@withName}{#2}{#3}{#4}{#5}
}
\def\qedEIGEN#1{\@ifnextchar[{\qedEIGEN@c{#1}}{\qedEIGEN@bes{#1}}}
\def\qedEIGEN@bes#1{%
    \parfillskip=0pt
    \widowpenalty=10000
    \displaywidowpenalty=10000
    \finalhyphendemerits=0
    \leavevmode
    \unskip
    \nobreak
    \hfil
    \penalty50
    \hskip.2em
    \null
    \hfill
    #1
    \par%
}
\def\qedEIGEN@c#1[#2]{%
    \parfillskip=0pt
    \widowpenalty=10000
    \displaywidowpenalty=10000
    \finalhyphendemerits=0
    \leavevmode
    \unskip
    \nobreak
    \hfil
    \penalty50
    \hskip.2em
    \null
    \hfill
    {#1~{\small\bfseries\upshape (#2)}}%
    \par%
}
\def\qedVARIANT#1#2{
    \expandafter\def\csname ennde#1Sign\endcsname{#2}
    \expandafter\def\csname ennde#1\endcsname{\@ifnextchar[{\qedEIGEN@c{#2}}{\qedEIGEN@bes{#2}}} 
}
\def\ra@pretheoremwork{
    \setlength{\theorempreskipamount}{\ownspaceabovethm}
    \setlength{\theorempostskipamount}{\ownspacebelowthm}
}
\def\rathmtransfer#1#2{
    \expandafter\def\csname #2\endcsname{\csname #1\endcsname}
    \expandafter\def\csname end#2\endcsname{\csname end#1\endcsname}
}
\def\ranewthm#1#2#3[#4]{
    \theoremstyle{\current@theoremstyle}
    \theoremseparator{\current@theoremseparator}
    \theoremprework{\ra@pretheoremwork}
    \@ifundefined{#1@basic}{\newtheorem{#1@basic}[#4]{#2}}{\renewtheorem{#1@basic}[#4]{#2}}
    \theoremstyle{\current@theoremstyle}
    \theoremseparator{\thmForceSepPt}
    \theoremprework{\ra@pretheoremwork}
    \@ifundefined{#1@withName}{\newtheorem{#1@withName}[#4]{#2}}{\renewtheorem{#1@withName}[#4]{#2}}
    \theoremstyle{nonumberplain}
    \theoremseparator{\thmForceSepPt}
    \theoremprework{\ra@pretheoremwork}
    \@ifundefined{#1@star@basic}{\newtheorem{#1@star@basic}[#4]{#2}}{\renewtheorem{#1@star@basic}[#4]{#2}}
    \theoremstyle{nonumberplain}
    \theoremseparator{\thmForceSepPt}
    \theoremprework{\ra@pretheoremwork}
    \@ifundefined{#1@star@withName}{\newtheorem{#1@star@withName}[#4]{#2}}{\renewtheorem{#1@star@withName}[#4]{#2}}
    \umbauenenv{#1}{#3}[#4]
    \umbauenenv{#1@star}{#3}[#4]
    \rathmtransfer{#1@star}{#1*}
}
\def\umbauenenv#1#2[#3]{%
    \expandafter\def\csname #1\endcsname{\relax%
        \@ifnextchar[{\csname #1@\endcsname}{\csname #1@\endcsname[*]}%
    }
    \expandafter\def\csname #1@\endcsname[##1]{\relax%
        \@ifnextchar[{\csname #1@@\endcsname[##1]}{\csname #1@@\endcsname[##1][*]}%
    }
    \expandafter\def\csname #1@@\endcsname[##1][##2]{%
        \ifx*##1%
            \def\enndeOfBlock{\csname end#1@basic\endcsname}
            \csname #1@basic\endcsname%
        \else%
            \def\enndeOfBlock{\csname end#1@withName\endcsname}
            \csname #1@withName\endcsname[##1]%
        \fi%
        \def\makelabel####1{%
            \gdef\beweislabel{####1}%
            \label{\beweislabel}%
        }%
        \ifx*##2%
            \def\enndeSymbol{\qedEIGEN{#2}}
        \else%
            \def\enndeSymbol{\qedEIGEN{#2}[##2]}
        \fi
    }
    \expandafter\gdef\csname end#1\endcsname{\enndeSymbol\enndeOfBlock}
}
    \def\current@theoremstyle{plain}
    \def\current@theoremseparator{\thmnumberingseppt}
    \theoremstyle{\current@theoremstyle}
\def\behauptungbeleg@claim{%
    \iflanguage{british}{Claim}{%
    \iflanguage{english}{Claim}{%
    \iflanguage{ngerman}{Behauptung}{%
    \iflanguage{russian}{Утверждение}{%
    Claim%
    }}}}%
}
\def\behauptungbeleg@pf@kurz{%
    \iflanguage{british}{Pf}{%
    \iflanguage{english}{Pf}{%
    \iflanguage{ngerman}{Bew}{%
    \iflanguage{russian}{Доказательство}{%
    Pf%
    }}}}%
}
\def\behauptungbeleg{\@ifnextchar\bgroup{\behauptungbeleg@c}{\behauptungbeleg@bes}}
    \def\behauptungbeleg@c#1{\item[{\bfseries \behauptungbeleg@claim\erlaubeplatz #1.}]}
    \def\behauptungbeleg@bes{\item[{\bfseries \behauptungbeleg@claim.}]}
\def\belegbehauptung{\item[{\bfseries\itshape\behauptungbeleg@pf@kurz.}]}
\newcolumntype{\RECHTS}[1]{>{\raggedleft}p{#1}}
\newcolumntype{\LINKS}[1]{>{\raggedright}p{#1}}
\newcolumntype{m}{>{$}l<{$}}
\newcolumntype{C}{>{$}c<{$}}
\newcolumntype{L}{>{$}l<{$}}
\newcolumntype{R}{>{$}r<{$}}
\newcolumntype{0}{@{\hspace{0pt}}}
\newcolumntype{\LINKSRAND}{@{\hspace{\@totalleftmargin}}}
\newcolumntype{h}{@{\extracolsep{\fill}}}
\newcolumntype{i}{>{\itshape}}
\newcolumntype{t}{@{\hspace{\tabcolsep}}}
\newcolumntype{q}{@{\hspace{1em}}}
\newcolumntype{n}{@{\hspace{-\tabcolsep}}}
\newcolumntype{M}[2]{%
    >{\begin{minipage}{#2}\begin{math}}%
    {#1}%
    <{\end{math}\end{minipage}}%
}
\newcolumntype{T}[2]{%
    >{\begin{minipage}{#2}}%
    {#1}%
    <{\end{minipage}}%
}
\def\punkteumgebung@genbefehl#1#2#3{
    \punkteumgebung@genbefehl@{#1}{#2}{#3}{}{}
    \punkteumgebung@genbefehl@{multi#1}{#2}{#3}{
        \setlength{\columnsep}{10pt}%
        \setlength{\columnseprule}{0pt}%
        \begin{multicols}{\thecolumnanzahl}%
    }{\end{multicols}\nvraum{1}}
}
\def\punkteumgebung@genbefehl@#1#2#3#4#5{
    \expandafter\gdef\csname #1\endcsname{
        \@ifnextchar\bgroup{\csname #1@c\endcsname}{\csname #1@bes\endcsname}
    }
        \expandafter\def\csname #1@c\endcsname##1{
            \@ifnextchar[{\csname #1@c@\endcsname{##1}}{\csname #1@c@\endcsname{##1}[\z@]}
        }
        \expandafter\def\csname #1@c@\endcsname##1[##2]{
            \@ifnextchar[{\csname #1@c@@\endcsname{##1}[##2]}{\csname #1@c@@\endcsname{##1}[##2][\z@]}
        }
        \expandafter\def\csname #1@c@@\endcsname##1[##2][##3]{
            \let\alterlinkerRand\gesamtlinkerRand
            \let\alterrechterRand\gesamtrechterRand
            \addtolength{\gesamtlinkerRand}{##2}
            \addtolength{\gesamtrechterRand}{##3}
            \advance\linewidth -##2%
            \advance\linewidth -##3%
            \advance\@totalleftmargin ##2%
            \parshape\@ne \@totalleftmargin\linewidth%
            #4
            \begin{#2}[\upshape ##1]%
                \setlength{\parskip}{0.5\baselineskip}\relax%
                \setlength{\topsep}{\z@}\relax%
                \setlength{\partopsep}{\z@}\relax%
                \setlength{\parsep}{\parskip}\relax%
                \setlength{\itemsep}{#3}\relax%
                \setlength{\listparindent}{\z@}\relax%
                \setlength{\itemindent}{\z@}\relax%
        }
        \expandafter\def\csname #1@bes\endcsname{
            \@ifnextchar[{\csname #1@bes@\endcsname}{\csname #1@bes@\endcsname[\z@]}
        }
        \expandafter\def\csname #1@bes@\endcsname[##1]{
            \@ifnextchar[{\csname #1@bes@@\endcsname[##1]}{\csname #1@bes@@\endcsname[##1][\z@]}
        }
        \expandafter\def\csname #1@bes@@\endcsname[##1][##2]{
            \let\alterlinkerRand\gesamtlinkerRand
            \let\alterrechterRand\gesamtrechterRand
            \addtolength{\gesamtlinkerRand}{##1}
            \addtolength{\gesamtrechterRand}{##2}
            \advance\linewidth -##1%
            \advance\linewidth -##2%
            \advance\@totalleftmargin ##1%
            \parshape\@ne \@totalleftmargin\linewidth%
            #4
            \begin{#2}%
                \setlength{\parskip}{0.5\baselineskip}\relax%
                \setlength{\topsep}{\z@}\relax%
                \setlength{\partopsep}{\z@}\relax%
                \setlength{\parsep}{\parskip}\relax%
                \setlength{\itemsep}{#3}\relax%
                \setlength{\listparindent}{\z@}\relax%
                \setlength{\itemindent}{\z@}\relax%
        }
    \expandafter\gdef\csname end#1\endcsname{%
        \end{#2}#5
        \setlength{\gesamtlinkerRand}{\alterlinkerRand}
        \setlength{\gesamtlinkerRand}{\alterrechterRand}
    }
}
\def\ritempunkt{{\Large \textbullet}} 
\setdefaultitem{\ritempunkt}{\ritempunkt}{\ritempunkt}{\ritempunkt}
\def\displaysum_#1{\@ifnextchar^{\displaysum@both_{#1}}{\displaysum@@sub{#1}}}
    \def\displaysum@both_#1^#2{\displaysum@@subsup{#1}{#2}}
    \def\displaysum@@sub#1{\mathop{\displaystyle\csname sum\endcsname_{#1}}}
    \def\displaysum@@subsup#1#2{\mathop{\displaystyle\csname sum\endcsname_{#1}^{#2}}}
\def\displaysup_#1{\@ifnextchar^{\displaysup@both_{#1}}{\displaysup@@sub{#1}}}
    \def\displaysup@both_#1^#2{\displaysup@@subsup{#1}{#2}}
    \def\displaysup@@sub#1{\mathop{\displaystyle\csname sup\endcsname_{#1}}}
    \def\displaysup@@subsup#1#2{\mathop{\displaystyle\csname sup\endcsname_{#1}^{#2}}}
\def\displaymin_#1{\@ifnextchar^{\displaymin@both_{#1}}{\displaymin@@sub{#1}}}
    \def\displaymin@both_#1^#2{\displaymin@@subsup{#1}{#2}}
    \def\displaymin@@sub#1{\mathop{\displaystyle\csname min\endcsname_{#1}}}
    \def\displaymin@@subsup#1#2{\mathop{\displaystyle\csname min\endcsname_{#1}^{#2}}}
\def\displaymax_#1{\@ifnextchar^{\displaymax@both_{#1}}{\displaymax@@sub{#1}}}
    \def\displaymax@both_#1^#2{\displaymax@@subsup{#1}{#2}}
    \def\displaymax@@sub#1{\mathop{\displaystyle\csname max\endcsname_{#1}}}
    \def\displaymax@@subsup#1#2{\mathop{\displaystyle\csname max\endcsname_{#1}^{#2}}}
\def\displaylim_#1{\@ifnextchar^{\displaylim@both_{#1}}{\displaylim@@sub{#1}}}
    \def\displaylim@both_#1^#2{\displaylim@@subsup{#1}{#2}}
    \def\displaylim@@sub#1{\mathop{\displaystyle\csname lim\endcsname_{#1}}}
    \def\displaylim@@subsup#1#2{\mathop{\displaystyle\csname lim\endcsname_{#1}^{#2}}}
\def\displayliminf_#1{\@ifnextchar^{\displayliminf@both_{#1}}{\displayliminf@@sub{#1}}}
    \def\displayliminf@both_#1^#2{\displayliminf@@subsup{#1}{#2}}
    \def\displayliminf@@sub#1{\mathop{\displaystyle\csname liminf\endcsname_{#1}}}
    \def\displayliminf@@subsup#1#2{\mathop{\displaystyle\csname liminf\endcsname_{#1}^{#2}}}
\def\displaylimsup_#1{\@ifnextchar^{\displaylimsup@both_{#1}}{\displaylimsup@@sub{#1}}}
    \def\displaylimsup@both_#1^#2{\displaylimsup@@subsup{#1}{#2}}
    \def\displaylimsup@@sub#1{\mathop{\displaystyle\csname limsup\endcsname_{#1}}}
    \def\displaylimsup@@subsup#1#2{\mathop{\displaystyle\csname limsup\endcsname_{#1}^{#2}}}
\def\matrix#1{\left(\begin{array}[mc]{#1}}
    \def\endmatrix{\end{array}\right)}
\def\smatrix{\left(\begin{smallmatrix}}
    \def\endsmatrix{\end{smallmatrix}\right)}
\def\multiargrekursiverbefehl#1#2#3#4#5#6#7#8{%
    \expandafter\gdef\csname#1\endcsname #2##1#4{\csname #1@anfang\endcsname##1#3\egroup}
    \expandafter\def\csname #1@anfang\endcsname##1#3{#5##1\@ifnextchar\egroup{\csname #1@ende\endcsname}{#7\csname #1@mitte\endcsname}}
    \expandafter\def\csname #1@mitte\endcsname##1#3{#6##1\@ifnextchar\egroup{\csname #1@ende\endcsname}{#7\csname #1@mitte\endcsname}}
    \expandafter\def\csname #1@ende\endcsname##1{#8}
}
\def\underbracenodisplay#1{%
    \mathop{\vtop{\m@th\ialign{##\crcr
    $\hfil\displaystyle{#1}\hfil$\crcr
    \noalign{\kern3\p@\nointerlineskip}%
    \upbracefill\crcr\noalign{\kern3\p@}}}}\limits%
}
\def\maths[#1]#2{%
    \ifthenelse{\equal{\boolinmdframed}{\boolwahr}}{}{\begin{escapeeinzug}}
    \noindent%
    \let\eqtagset\boolfalsch
    \let\eqtaglabel\boolleer
    \let\eqtagsymb\boolleer
    \let\alteqtag\eqtag
    \def\eqtag{\@ifnextchar[{\eqtag@loc@}{\eqtag@loc@[*]}}%
    \def\eqtag@loc@[##1]{\@ifnextchar\bgroup{\eqtag@loc@@[##1]}{\eqtag@loc@@[##1]{}}}%
    \def\eqtag@loc@@[##1]##2{%
        \gdef\eqtagset{\boolwahr}
        \gdef\eqtaglabel{##1}
        \gdef\eqtagsymb{##2}
    }%
    \def\verticalalign{}%
        \IfBeginWith{#1}{t}{\def\verticalalign{t}}{}%
        \IfBeginWith{#1}{m}{\def\verticalalign{c}}{}%
        \IfBeginWith{#1}{b}{\def\verticalalign{b}}{}%
    \def\horizontalalign{\null\hfill\null}%
        \IfEndWith{#1}{l}{}{\null\hfill\null}%
        \IfEndWith{#1}{r}{\def\horizontalalign{}}{}%
    \begin{math}
    \begin{array}[\verticalalign]{0#2}%
}
    \def\endmaths{%
        \end{array}
        \end{math}\horizontalalign%
        \let\eqtag\alteqtag
        \ifthenelse{\equal{\eqtagset}{\boolwahr}}{\eqtag[\eqtaglabel]{\eqtagsymb}}{}
        \ifthenelse{\equal{\boolinmdframed}{\boolwahr}}{}{\end{escapeeinzug}}%
    }
\def\longmaths[#1]#2{\relax
    \let\altarraystretch\arraystretch
    \renewcommand\arraystretch{1.2}\relax
    \begin{longtable}[#1]{\LINKSRAND #2}
}
    \def\endlongmaths{
        \end{longtable}
        \renewcommand\arraystretch{\altarraystretch}
    }
\def\einzug{\@ifnextchar[{\indents@}{\indents@[\z@]}}
    \def\indents@[#1]{\@ifnextchar[{\indents@@[#1]}{\indents@@[#1][\z@]}}
    \def\indents@@[#1][#2]{%
        \begin{list}{}{\relax
            \setlength{\topsep}{\z@}\relax
            \setlength{\partopsep}{\z@}\relax
            \setlength{\parsep}{\parskip}\relax
            \setlength{\listparindent}{\z@}\relax
            \setlength{\itemindent}{\z@}\relax
            \setlength{\leftmargin}{#1}\relax
            \setlength{\rightmargin}{#2}\relax
            \let\alterlinkerRand\gesamtlinkerRand
            \let\alterrechterRand\gesamtrechterRand
            \addtolength{\gesamtlinkerRand}{#1}
            \addtolength{\gesamtrechterRand}{#2}
        }\relax
            \item[]\relax
    }
        \def\endeinzug{%
            \setlength{\gesamtlinkerRand}{\alterlinkerRand}
            \setlength{\gesamtlinkerRand}{\alterrechterRand}
            \end{list}%
        }
\def\escapeeinzug{\begin{einzug}[-\gesamtlinkerRand][-\gesamtrechterRand]}
    \def\endescapeeinzug{\end{einzug}}
\def\programmiercode{
    \modulolinenumbers[1]
    \begin{einzug}[\rtab][\rtab]%
    \begin{linenumbers}%
        \fontfamily{cmtt}\fontseries{m}\fontshape{u}\selectfont%
        \setlength{\parskip}{1\baselineskip}%
        \setlength{\parindent}{0pt}%
}
    \def\endprogrammiercode{
        \end{linenumbers}
        \end{einzug}
    }
\def\schattiertebox@genbefehl#1#2#3{
    \expandafter\gdef\csname #1\endcsname{%
        \@ifnextchar[{\csname #1@args\endcsname}{\csname #1@args\endcsname[#3]}
    }
        \expandafter\def\csname #1@args\endcsname[##1]{%
            \@ifnextchar[{\csname #1@args@l\endcsname[##1]}{\csname #1@args@n\endcsname[##1]}
        }
        \expandafter\def\csname #1@args@l\endcsname[##1][##2]{%
            \@ifnextchar[{\csname #1@args@l@r\endcsname[##1][##2]}{\csname #1@args@l@n\endcsname[##1][##2]}
        }
        \expandafter\def\csname #1@args@n\endcsname[##1]{%
            \let\boolinmdframed\boolwahr
            \begin{mdframed}[#2leftmargin=0,rightmargin=0,outermargin=0,innermargin=0,##1]
        }
        \expandafter\def\csname #1@args@l@n\endcsname[##1][##2]{%
            \let\boolinmdframed\boolwahr
            \begin{mdframed}[#2leftmargin=##2/2,rightmargin=##2/2,outermargin=##2/2,innermargin=##2/2,##1]
        }
        \expandafter\def\csname #1@args@l@r\endcsname[##1][##2][##3]{%
            \let\boolinmdframed\boolwahr
            \begin{mdframed}[#2leftmargin=##2,rightmargin=##3,outermargin=##2,innermargin=##3,##1]
        }
    \expandafter\gdef\csname end#1\endcsname{%
        \end{mdframed}
        \let\boolinmdframed\boolfalsch
    }
}
\def\tikzsetzepfeil#1{%
    \begin{tikzpicture}[remember picture,overlay,>=latex]%
        \draw #1;%
    \end{tikzpicture}%
}
\def\tikzsetzekreise[#1]#2#3{%
    \tikzsetzepfeil{%
    [rounded corners,#1]%
        ([shift={(-\tabcolsep,0.75\baselineskip)}]#2)%
        rectangle%
        ([shift={(\tabcolsep,-0.5\baselineskip)}]#3)
    }%
}
\tikzset{
    >=stealth,
    auto,
    node distance=1cm,
    thick,
    main node/.style={
        circle,draw,font=\sffamily\Large\bfseries,minimum size=0pt
    },
    state/.style={minimum size=0pt}
    loop above right/.style={loop,out=30,in=60,distance=0.5cm},
    loop above left/.style={above left,out=150,in=120,loop},
    loop below right/.style={below right,out=330,in=300,loop},
    loop below left/.style={below left,out=240,in=210,loop},
    itria/.style={
        draw,dashed,shape border uses incircle,
        isosceles triangle,shape border rotate=90,yshift=-1.45cm
    },
    rtria/.style={
        draw,dashed,shape border uses incircle,
        isosceles triangle,isosceles triangle apex angle=90,
        shape border rotate=-45,yshift=0.2cm,xshift=0.5cm
    },
    ritria/.style={
        draw,dashed,shape border uses incircle,
        isosceles triangle,isosceles triangle apex angle=110,
        shape border rotate=-55,yshift=0.1cm
    },
    litria/.style={
        draw,dashed,shape border uses incircle,
        isosceles triangle,isosceles triangle apex angle=110,
        shape border rotate=235,yshift=0.1cm
    }
}
\renewenvironment{cases}[0]{\left\{\begin{array}[c]{0lcl}}{\end{array}\right.}
\newcommand{\usesinglequotes}[1]{`#1'}
\providecommand{\topInterior}{}
\renewcommand{\topInterior}[1]{\mathop{\textup{int}}(#1)}
\providecommand{\card}{}
\renewcommand{\card}[1]{\lvert #1 \rvert}
\providecommand{\signum}{}
\renewcommand{\signum}[1]{\mathop{\mathrm{sgn}}(#1)}
\providecommand{\abs}{}
\renewcommand{\abs}[1]{\lvert #1 \rvert}
\providecommand{\absLong}{}
\renewcommand{\absLong}[1]{\Big| #1 \Big|}
\providecommand{\MonoidAsTuple}{}
\renewcommand{\MonoidAsTuple}[3]{(#1, #2, #3)}
\providecommand{\norm}{}
\renewcommand{\norm}[1]{\lVert #1 \rVert}
\providecommand{\normLong}{}
\renewcommand{\normLong}[1]{\Big\| #1 \Big\|}
\providecommand{\opSpectrum}{}
\renewcommand{\opSpectrum}[1]{\mathop{\sigma}(#1)}
\providecommand{\opResolventSet}{}
\renewcommand{\opResolventSet}[1]{\mathop{\rho}(#1)}
\providecommand{\opRegRevExponents}{}
\renewcommand{\opRegRevExponents}[1]{\mathop{\mathrm{reg}}(#1)}
\providecommand{\isPartition}{}
\renewcommand{\isPartition}[2]{#1 \in \mathop{\mathrm{Part}}(#2)}
\def\Cts{\@ifnextchar_{\Cts@tief}{\Cts@tief_{}}}
    \def\Cts@tief_#1#2{\@ifnextchar\bgroup{\Cts@two_{#1}{#2}}{\Cts@one_{#1}{#2}}}
    \def\Cts@one_#1#2{C_{#1}\big(#2\big)}
    \def\Cts@two_#1#2#3{C_{#1}\big(#2,~#3\big)}
\newcommand{\onematrix}[0]{\text{\upshape\bfseries I}}
\newcommand{\zeromatrix}[0]{\mathbf{0}}
\newcommand{\onevector}[0]{\mathbf{1}}
\newcommand{\zerovector}[0]{\mathbf{0}}
\newcommand{\AlgebraUpperTr}[0]{\mathcal{G}}
\def\naturals{\mathbb{N}}
\def\naturalsPos{\mathbb{N}}
\def\naturalsZero{\mathbb{N}_{0}}
\def\integers{\mathbb{Z}}
\def\reals{\mathbb{R}}
\def\realsPos{\reals_{>0}}
\def\realsNonNeg{\reals_{\geq 0}}
\def\complex{\mathbb{C}}
\def\Torus{\mathbb{T}}
\def\BRAKET#1#2{\langle{}#1,~#2{}\rangle}
\def\brkt#1{\langle{}#1{}\rangle}
\newcommand{\Cnought}[0]{\ensuremath{C_{0}}}
\def\without{\mathbin{\setminus}}
\def\da{\partial}
\let\altphi\phi
\let\altvarphi\varphi
    \def\phi{\altvarphi}
    \def\varphi{\altphi}
\def\quer#1{\overline{#1}}
\def\lim{\mathop{\ell\mathrm{im}}}
\def\supp{\mathop{\textup{supp}}}
\def\dim{\mathop{\textup{dim}}}
\def\dom{\mathop{\textup{dom}}}
\def\ran{\mathop{\textup{ran}}}
\def\Re{\mathop{\mathfrak{R}\mathrm{e}}}
\def\Imag{\mathop{\mathfrak{I}\mathrm{m}}}
\def\imagunit{\imath}
\def\BoundedOpsSymbol{\mathfrak{L}}
\def\BoundedOps#1{\@ifnextchar\bgroup{\BoundedOps@two{#1}}{\mathop{\BoundedOpsSymbol}(#1)}}
    \def\BoundedOps@two#1#2{\mathop{\BoundedOpsSymbol}(#1,#2)}
\def\BoundedOpsInv#1{\@ifnextchar\bgroup{\BoundedOps@two{#1}}{\mathop{\BoundedOpsSymbol}(#1)^{\times}}}
    \def\BoundedOpsInv@two#1#2{\mathop{\BoundedOpsSymbol}(#1,#2)^{\times}}
\def\HilbertRaum{\mathcal{H}}
\def\topSOT{\text{\upshape \scshape sot}}
\renewcommand{\arraystretch}{1}
\def\firstparagraph{\noindent}
\def\continueparagraph{\noindent}
    \def\theunitnamesection{\thesection}
    \def\sectionname{}
    \let\appendix@orig\appendix
    \def\appendix{%
        \appendix@orig%
        \let\boolinappendix\boolwahr
        \addcontentsline{toc}{part}{\appendixname}%
        \addtocontents{toc}{\protect\setcounter{tocdepth}{0}}
        \def\sectionname{Appendix}%
        \def\theunitnamesection{\Alph{section}}%
    }
    \def\notappendix{%
        \let\boolinappendix\boolfalse
        \addtocontents{toc}{\protect\setcounter{tocdepth}{1 }}
        \def\sectionname{}%
        \def\theunitnamesection{\arabic{section}}%
    }
    \def\@seccntformat#1{%
        \protect\textup{%
            \protect\@secnumfont
            \expandafter\protect\csname format#1\endcsname%
            \csname the#1\endcsname
            \expandafter\protect\csname format#1@pt\endcsname%
            \space
        }%
    }
    \def\formatsection@text{\centering\Large\scshape}
    \def\formatsection@pt{\secnumberingseppt}
    \def\section{\@startsection{section}{1}{\z@}{.7\linespacing\@plus\linespacing}{.5\linespacing}{\formatsection@text}}
    \def\formatsubsection@text{\flushleft\bfseries\scshape}
    \def\formatsubsection@pt{\subsecnumberingseppt}
    \def\subsection{\@startsection{subsection}{2}{\z@}{\z@}{\z@\hspace{1em}}{\formatsubsection@text}}
    \renewcommand{\paragraph}[1]{%
        {\bfseries\itshape #1}\:%
    }
\def\footnotemark[#1]{\text{\textsuperscript{\getrefnumber{#1}}}}
\def\rafootnotectr{20}
\providecommand{\incrftnotectr}{}
\renewcommand{\incrftnotectr}[1]{%
    \addtocounter{#1}{1}%
    \ifnum\value{#1}>\rafootnotectr\relax
        \setcounter{#1}{0}%
    \fi%
}
\providecommand{\footnoteref}{}
\renewcommand{\footnoteref}[1]{\protected@xdef\@thefnmark{\ref{#1}}\@footnotemark}
\let\@old@footnotetext\footnotetext
\def\footnotetext[#1]#2{%
    \incrftnotectr{footnote}%
    \@old@footnotetext[\value{footnote}]{\label{#1}#2}%
}
\def\kopfzeiledefault{
    \lhead[]{}
    \lhead[]{}
    \chead[]{}
    \rhead[]{}
    \lfoot[]{}
    \cfoot{\footnotesize\thepage}
    \rfoot[]{}
}
\def\aktuellesfont{\csname lmodern\endcsname}
\def\documentfont{%
    \gdef\aktuellesfont{\csname lmodern\endcsname}%
    \fontfamily{lmr}\fontseries{m}\selectfont%
    \renewcommand{\sfdefault}{phv}%
    \renewcommand{\ttdefault}{pcr}%
    \renewcommand{\rmdefault}{cmr}
    \renewcommand{\bfdefault}{bx}%
    \renewcommand{\itdefault}{it}%
    \renewcommand{\sldefault}{sl}%
    \renewcommand{\scdefault}{sc}%
    \renewcommand{\updefault}{n}%
}
\def\startdocumentlayoutoptions{
    \selectlanguage{british}
    \setlength{\parskip}{0.25\baselineskip}
    \setlength{\parindent}{2em}
    \kopfzeiledefault
    \documentfont
    \normalsize
}
\providecommand{\highlightTerm}{}
\renewcommand{\highlightTerm}[1]{\emph{#1}}
\providecommand{\highlightForReview}{}
\renewcommand{\highlightForReview}[1]{\bgroup\color{blue}#1\egroup}
\def\@adminfootnotes{%
    \let\@makefnmark\relax
    \let\@thefnmark\relax
    \ifx\@empty\@date\else%
        \@footnotetext{\@setdate}%
    \fi%
    \ifx\@empty\@subjclass\else%
        \@footnotetext{\@setsubjclass}%
    \fi
    \ifx\@empty\@keywords\else%
        \@footnotetext{\@setkeywords}%
    \fi
    \ifx\@empty\thankses\else%
        \@footnotetext{\def\par{\let\par\@par}\@setthanks}%
    \fi
}
\def\@settitle{\Large\bfseries\scshape\@title}
\def\@maketitle{%
  \normalfont\normalsize
  \@adminfootnotes
  \@mkboth{\@nx\shortauthors}{\@nx\shorttitle}%
  \global\topskip42\p@\relax
  {\centering\@settitle}
  \ifx\@empty\authors\else{\centering\small\@setauthors}\fi
  \ifx\@empty\@date\else{\vtop{\centering\small\@date\@@par}}\fi
  \ifx\@empty\@dedicatory%
  \else%
    \baselineskip\p@
    \vtop{\centering{\footnotesize\itshape\@dedicatory\@@par}%
    \global\dimen@i\prevdepth}\prevdepth\dimen@i%
  \fi
  \@setabstract
  \normalsize
  \if@titlepage
    \newpage
  \else
    \dimen@34\p@\advance\dimen@-\baselineskip
  \fi
}
\def\addresseshere{%
  \bgroup
  \setlength{\parindent}{0pt}
  \enddoc@text
  \egroup
  \let\enddoc@text\relax
}
\begin{document}
\startdocumentlayoutoptions

\thispagestyle{plain}



\def\abstractname{Abstract}
\begin{abstract}
    Consider $d$ commuting $\Cnought$-semigroups
    (or equivalently: $d$-parameter $\Cnought$-semigroups)
    over a Hilbert space
    for $d \in \naturals$.
    In the literature
    (\cf \cite{Nagy1970,Slocinski1974,Slocinski1982,Ptak1985,LeMerdy1996DilMultiParam,Shamovich2017dilationsMultiParam}),
    conditions are provided to classify the existence of
    unitary and regular unitary dilations.
    Some of these conditions require inspecting values of the semigroups,
    some provide only sufficient conditions,
    and others involve verifying sophisticated properties of the generators.
    By focussing on semigroups with bounded generators,
    we establish a simple and natural condition on the generators,
    \viz
        \emph{complete dissipativity},
        which naturally extends the basic notion of the dissipativity of the generators.
    Using examples of non-doubly commuting semigroups,
    this property can be shown to be strictly stronger than dissipativity.
    As the first main result, we demonstrate that
    complete dissipativity completely characterises
    the existence of regular unitary dilations,
    and extend this to the case of arbitrarily many commuting $\Cnought$-semigroups.
    We furthermore show that all multi-parameter $\Cnought$-semigroups (with bounded generators)
    admit a weaker notion of regular unitary dilations,
    and provide simple sufficient norm criteria for complete dissipativity.
    The paper concludes with an application to the von Neumann polynomial inequality problem,
    which we formulate for the semigroup setting and solve negatively for all $d \geq 2$.
\end{abstract}



\subjclass[2020]{47A13, 47A20, 47D03, 47D06}
\keywords{Semigroups of operators, bounded semigroups, dilations, infinitesimal generator.}
\title[Dilations of commuting $\Cnought$-semigroups with bounded generators and the von Neumann polynomial inequality]{Dilations of commuting $\Cnought$-semigroups with bounded generators and the von Neumann polynomial inequality}
\author{Raj Dahya}
\email{raj.dahya@web.de}
\address{Fakult\"at f\"ur Mathematik und Informatik\newline
Universit\"at Leipzig, Augustusplatz 10, D-04109 Leipzig, Germany}

\maketitle



\setcounternach{section}{1}



\section[Introduction]{Introduction}
\label{sec:introduction}


\firstparagraph
Dynamical systems can often be described by
evolving contractive operators over
    Hilbert or Banach spaces.
Characterising the possibility of embedding these into larger systems
described by surjective isometries
began in 1953 with the research of Sz.-Nagy, \etAlia in \cite{Nagy1953},
in which the unitary (power) dilation of contractions
and of $1$-parameter contractive $\Cnought$-semigroups
over Hilbert spaces is presented.
In 1955, Stinespring \cite{Stinespring1955} introduced dilation to the non-commutative setting
of Banach and $C^{\ast}$-algebras, thus opening the way for results
for more sophisticated dynamical systems.
For an overview of this development, see \exempli \cite{Averson2010DilationOverview,Shalit2021DilationBook}.
With the backdrop of these theoretical frameworks,
intensive research has been conducted over the decades to yield concrete results
for families of operators (see \exempli \cite{Ando1963pairContractions}),
classes of semigroups (see \exempli \cite{Slocinski1974,Slocinski1982,Ptak1985,LeMerdy1996DilMultiParam,Shamovich2017dilationsMultiParam}),
and dynamical systems over $C^{\ast}$- and $W^{\ast}$-algebras (see \exempli \cite{Tevzadze1999markhov,Fagnola2003qsde,Fagnola2006bookQSDE,Izumi2012E0semigroups,Laca2022lcmDilation}).

In this paper we focus on the semigroup setting.
Consider $d$ commuting $\Cnought$-semigroup $T_{1},T_{2},\ldots,T_{d}$ over a Hilbert space $\HilbertRaum$
for some $d\in\naturals$.
We say that $T_{1},T_{2},\ldots,T_{d}$ have a \highlightTerm{simultaneous unitary dilation}
if there is
    a Hilbert space $\HilbertRaum^{\prime}$,
    a bounded operator (necessarily an isometry)
        ${r\in\BoundedOps{\HilbertRaum}{\HilbertRaum^{\prime}}}$,
    and $d$ commuting unitary $\Cnought$-semigroups $U_{1},U_{2},\ldots,U_{d}$ over $\HilbertRaum^{\prime}$
    (which can be uniquely extended to commuting unitary representations of $(\reals,+,0)$),
such that
    $%
        \prod_{i=1}^{d}T(t_{i}) = r^{\ast}(\prod_{i=1}^{d}U(t_{i}))r
    $
holds for all $\mathbf{t}=(t_{i})_{i=1}^{d}\in\realsNonNeg^{d}$.
We call this a \highlightTerm{simultaneous regular unitary dilation} if
the stronger condition
    $%
        (\prod_{i=1}^{d}T(t_{i}^{-}))^{\ast}
        (\prod_{i=1}^{d}T(t_{i}^{+}))
        = r^{\ast}(\prod_{i=1}^{d}U(t_{i}))r
    $
holds for all $\mathbf{t}=(t_{i})_{i=1}^{d}\in\reals^{d}$,
where $t^{+}=\max\{t,0\}$ and $t^{-}=\max\{-t,0\}$
denote the positive and negative parts of $t$ for any $t\in\reals$.
One reason to consider regular unitary dilations,
is that they are more directly related to the notion of \usesinglequotes{positive definite functions},
which in turn characterise group representation of locally compact Hausdorff groups
(%
    \cf
    \cite[Theorem~I.7.1~b)]{Nagy1970},
    \cite[Notes, p.~52]{Pisier2001bookCBmaps},
    and
    \cite[\S{}3.3]{Folland2015bookHarmonicAnalysis}%
).

In
    \cite[Theorem~I.8.1]{Nagy1970},
    \cite{Slocinski1974},
    \cite[Theorem~2]{Slocinski1982},
    and
    \cite[Theorem~2.3]{Ptak1985},
it was proved that $T_{1},T_{2},\ldots,T_{d}$ have a simultaneous unitary dilation
if they are contractive and $d\in\{1,2\}$.
In \cite[Theorem~3.2]{Ptak1985} a general condition on $T_{1},T_{2},\ldots,T_{d}$
which we shall refer to as \emph{Brehmer positivity}
(see \Cref{defn:brehmer-operators:sig:article-dilation-problem-raj-dahya})
was found for the existence of simultaneous regular unitary dilations.
However, to verify this condition one needs to consider values of the semigroups.
In \cite[Theorem~2.2 and Theorem~3.1]{LeMerdy1996DilMultiParam} Le~Merdy fully classified the existence of a \emph{weaker notion} of simultaneous unitary dilations,
and applied this to commuting families of bounded analytic $\Cnought$-semigroups.
More recently, Shamovich and Vinnivok established in \cite{Shamovich2017dilationsMultiParam} sufficient conditions on generators for the existence of simultaneous unitary dilations.
These conditions are quite sophisticated and involve proving the existence
of embeddings of the generators of the marginals.



Note that there is a natural correspondence between $d$ commuting $\Cnought$-semigroups,
    $T_{1},T_{2},\ldots,T_{d}$
and $d$-parameter $\Cnought$-semigroups, $T$,
\idest $\topSOT$-continuous morphisms between the algebraic structures
    $(\realsNonNeg^{d}, +, \zerovector)$
    and
    $\MonoidAsTuple{\BoundedOps{\HilbertRaum}}{\circ}{\onematrix}$.
This correspondence is realised via the constructions
    $T(\mathbf{t}) = \prod_{i=1}^{d}T_{i}(t_{i})$
    for all $\mathbf{t}\in\realsNonNeg^{d}$
and the co-ordinate maps
    $T_{i}(t) = T(0,0,\ldots,\underset{i}{t},\ldots,0)$
    for $t\in\realsNonNeg$ and $i\in\{1,2,\ldots,d\}$.
In this way, the $T_{i}$ may be viewed as
the \highlightTerm{marginal semigroups} (or simply: the \highlightTerm{marginals}) of $T$.
It is well known that their generators $A_{1},A_{2},\ldots,A_{d}$ commute (even if they are unbounded).
See for example \cite[Proposition 1.1.8--9]{Butzer1967semiGrApproximationsBook}.
It is also straightforward to see that $T$ is contractive/unitary/isometric
if and only if each of the $T_{i}$ are.
Hence one may interchangeably refer to
    commuting families of $d$ contractive/unitary/isometric $\Cnought$-semigroups
    and
    $d$-parameter contractive/unitary/isometric $\Cnought$-semigroups.
For convenience, we shall primarily use the multi-parameter presentation throughout this paper.

In our research, we focus primarily on multi-parameter $\Cnought$-semigroups with bounded generators.
In \S{}\ref{sec:definitions} we introduce the special conditions
of \emph{complete dissipativity} and \emph{complete super dissipativity} on the generators of $T$
(see \Cref{defn:dissipation-operators-and-conditions:sig:article-dilation-problem-raj-dahya}).
In \S{}\ref{sec:algebra} we develop algebraic identities involving these notions.
In \S{}\ref{sec:results} the main classification result is proved:

\begin{thm}[Classfication via complete dissipativity]
\makelabel{thm:classification:sig:article-dilation-problem-raj-dahya}
    Let $d\in\naturalsPos$
    and $T$ be a $d$-parameter $\Cnought$-semigroup over $\HilbertRaum$
    with bounded generators.
    Then the following statements are equivalent.

    \begin{kompaktenum}{\bfseries (1)}[\rtab]
        \item\punktlabel{1}
            The semigroup $T$ has a regular unitary dilation.
        \item\punktlabel{2}
            The generators of $T$ are completely dissipative.
        \item\punktlabel{3}
            There is a net $(T^{(\alpha)})_{\alpha\in\cal{I}}$
            consisting of
            regularly unitarily dilatable
            $d$-parameter $\Cnought$-semigroups over $\HilbertRaum$,
            such that
                $%
                    (T^{(\alpha)})_{\alpha\in\cal{I}}
                    \longrightarrow T
                $
            uniformly in norm on compact subsets of $\realsNonNeg^{d}$,
                \idest
                ${%
                    \sup_{\mathbf{t} \in L}\norm{T^{(\alpha)}(\mathbf{t}) - T(\mathbf{t})}
                        \underset{\alpha}{\longrightarrow}
                            0
                }$
                for all compact $L \subseteq \realsNonNeg^{d}$.
        \item\punktlabel{3dash}
            There is a net $(T^{(\alpha)})_{\alpha\in\cal{I}}$
            consisting of
            regularly unitarily dilatable
            $d$-parameter $\Cnought$-semigroups over $\HilbertRaum$,
            such that
                $%
                    (T^{(\alpha)})_{\alpha\in\cal{I}}
                    \longrightarrow T
                $
            uniformly in the \topSOT-topology on compact subsets of $\realsNonNeg^{d}$,
                \idest
                ${%
                    \sup_{\mathbf{t} \in L}\norm{(T^{(\alpha)}(\mathbf{t}) - T(\mathbf{t}))\xi}
                        \underset{\alpha}{\longrightarrow}
                            0
                }$
                for all $\xi\in\HilbertRaum$
                and
                all compact $L \subseteq \realsNonNeg^{d}$.
    \end{kompaktenum}

    \nvraum{1}

\end{thm}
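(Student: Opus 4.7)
The plan is to close the loop of implications $\punktcref{1} \Rightarrow \punktcref{3} \Rightarrow \punktcref{3dash} \Rightarrow \punktcref{1}$ via semigroup-value arguments (Brehmer/Pt\'ak), and then to establish $\punktcref{1} \Leftrightarrow \punktcref{2}$ via differentiation and integration back through the algebraic identities of \S\ref{sec:algebra}.

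First, $\punktcref{1} \Rightarrow \punktcref{3}$ is immediate by taking the constant net $T^{(\alpha)} = T$, and $\punktcref{3} \Rightarrow \punktcref{3dash}$ holds since norm convergence implies \topSOT-convergence on bounded sets. For $\punktcref{3dash} \Rightarrow \punktcref{1}$, I would invoke Pt\'ak's characterisation \cite[Theorem~3.2]{Ptak1985}: a multi-parameter contractive $\Cnought$-semigroup has a regular unitary dilation if and only if its values satisfy the Brehmer positivity condition (a family of inequalities of the form $\sum_{F\subseteq\{1,\ldots,d\}}(-1)^{\card{F}}\prod_{i\in F}T_{i}(t_{i})^{\ast}\prod_{i\in F}T_{i}(t_{i}) \geq 0$). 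Brehmer positivity is a pointwise inequality on bounded families of operator products, and these inequalities are preserved under \topSOT-limits uniform on compacta since operator multiplication is jointly \topSOT-continuous on norm-bounded sets and the positive cone of $\BoundedOps{\HilbertRaum}$ is \topSOT-closed. Since each $T^{(\alpha)}$ is (by hypothesis) regularly unitarily dilatable and hence Brehmer-positive by Pt\'ak, the limit $T$ is Brehmer-positive as well, and another application of Pt\'ak yields a regular unitary dilation of $T$.

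For $\punktcref{1} \Rightarrow \punktcref{2}$, I would again start from Brehmer positivity of the values of $T$ (by Pt\'ak) and differentiate coordinatewise at $\mathbf{t}=\zerovector$. Since the generators $A_{1},\ldots,A_{d}$ are bounded, each value $T(\mathbf{t}) = \prod_{i}\exp(t_{i}A_{i})$ is real-analytic in $\mathbf{t}$, and the $d$-fold mixed partial derivative of the Brehmer sum at the origin yields exactly the expression in terms of the generators that defines complete dissipativity (\Cref{defn:dissipation-operators-and-conditions:sig:article-dilation-problem-raj-dahya}). The algebraic identities from \S\ref{sec:algebra} identify these derivatives with the \emph{dissipation operators}, so their non-negativity follows directly from positivity of the Brehmer operators.

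The converse $\punktcref{2} \Rightarrow \punktcref{1}$ is the main obstacle. Here the strategy is to invert the previous step: use the algebraic identities of \S\ref{sec:algebra} to express the Brehmer operators of $T(\mathbf{t})$ as integrals (or convergent power-series sums) of dissipation operators of the generators, evaluated along conjugations by $T(\mathbf{s})$ for $\mathbf{s}\leq\mathbf{t}$. Because the dissipation operator is itself positive under complete dissipativity, and because the conjugations $X \mapsto T(\mathbf{s})^{\ast}XT(\mathbf{s})$ are positivity-preserving, the Brehmer operators of $T$ inherit positivity, and Pt\'ak's theorem again delivers the regular unitary dilation. Concretely, I would try to show by a Duhamel-type identity that the Brehmer operator $B_{T}(\mathbf{t})$ equals $\int_{[0,\mathbf{t}]}T(\mathbf{s})^{\ast}\,D(\mathbf{A})\,T(\mathbf{s})\,d\mathbf{s}$ up to lower-order Brehmer operators of marginals, where $D(\mathbf{A})$ is the dissipation expression from complete dissipativity; induction on $d$ then closes the argument.

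Finally, $\punktcref{2} \Rightarrow \punktcref{3}$ would be recovered \emph{a posteriori} from $\punktcref{2}\Rightarrow\punktcref{1}\Rightarrow\punktcref{3}$ with the constant net, making $\punktcref{3}$ and $\punktcref{3dash}$ essentially cosmetic strengthenings that are nonetheless useful for the density/perturbation applications in later sections. The chief technical burden is therefore concentrated in the Duhamel/integration argument for $\punktcref{2}\Rightarrow\punktcref{1}$, where the combinatorics of the Brehmer sum interact nontrivially with the non-commutativity of the $A_{i}$'s (which is permitted, since only commutativity of the semigroups, not their generators in a stronger sense, is assumed).
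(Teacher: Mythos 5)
Your proposal is correct, and for the key implication (2)~$\Rightarrow$~(1) it takes a genuinely different route from the paper. The paper closes the cycle as (1)~$\Rightarrow$~(2)~$\Rightarrow$~(3)~$\Rightarrow$~(3$'$)~$\Rightarrow$~(1): it first shows that complete \emph{super} dissipativity forces Brehmer positivity for small $t$ (the leading Taylor term $2^{\card{K}}t^{\card{K}}S_{T,K}$ of $B_{T,K}(t)$ dominates the $O(t^{\card{K}+1})$ remainder, \Cref{prop:asymptotic-behaviour-brehmer-ops:sig:article-dilation-problem-raj-dahya}), then handles mere complete dissipativity by approximating $T$ with its $\boldsymbol{\omega}$-shifts $E_{\boldsymbol{\omega}}\cdot T$, $\boldsymbol{\omega}\in\realsPos^{d}$, which are completely super dissipative by \Cref{lemm:self-similarities-imply-perturbations-yield-positive-beta:sig:article-dilation-problem-raj-dahya}, and finally recovers (1) through the limit argument (3$'$)~$\Rightarrow$~(1); so in the paper the approximation statements (3), (3$'$) are load-bearing, whereas in your scheme they become corollaries of (1)~$\Rightarrow$~(3) with the constant net. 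Your Duhamel idea does work, and more cleanly than you hedge: there are no lower-order correction terms. Using the recursions \eqcref{eq:recursion:brehmer:sig:article-dilation-problem-raj-dahya} and \eqcref{eq:recursion:dissipation:sig:article-dilation-problem-raj-dahya} one proves by induction on $\card{K}$ the exact identity
\[
    B_{T,K}(t)
    = 2^{\card{K}}\int_{[0,\:t]^{K}}
        \Big(\prod_{i\in K}T_{i}(s_{i})\Big)^{\ast}
        S_{T,K}
        \Big(\prod_{i\in K}T_{i}(s_{i})\Big)
        \,d\mathbf{s},
\]
the step being $B_{T,K\cup\{\alpha\}}(t)=-\int_{0}^{t}T_{\alpha}(s)^{\ast}\big(A_{\alpha}^{\ast}B_{T,K}(t)+B_{T,K}(t)A_{\alpha}\big)T_{\alpha}(s)\,ds$ combined with the commutation of $A_{\alpha}$ with $T_{i}(s_{i})$ for $i\in K$ and with $-(A_{\alpha}^{\ast}S_{T,K}+S_{T,K}A_{\alpha})=2S_{T,K\cup\{\alpha\}}$; positivity of $S_{T,K}$ then yields $B_{T,K}(t)\geq\zeromatrix$ for \emph{all} $t$, and \Cref{thm:brehmer-iff-dilation:sig:article-dilation-problem-raj-dahya} applies, avoiding the super-dissipative detour entirely. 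Your (1)~$\Rightarrow$~(2) is essentially the paper's (the leading Taylor coefficient of $B_{T,K}$ at $0$ is $2^{\card{K}}S_{T,K}$, so dividing by $t^{\card{K}}$ and letting $t\to 0^{+}$ transfers positivity; the paper phrases this contrapositively), and your (3$'$)~$\Rightarrow$~(1) coincides with the paper's. The only caveat is that you leave the integral identity as an intention rather than a proof, but it is true and provable with exactly the recursions the paper already supplies.
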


As a consequence of this classification, we further show that all multi-parameter $\Cnought$-semigroups
with bounded generators admit weaker notions of regular unitary dilations.

\begin{cor}[Regular unitary dilation up to exponential equivalence]
\makelabel{cor:dilatable-up-to-exponential-modification:sig:article-dilation-problem-raj-dahya}
    Let $d\in\naturalsPos$
    and $T$ be a $d$-parameter $\Cnought$-semigroup over $\HilbertRaum$
    with bounded generators.
    Then for some $\boldsymbol{\omega} \in \realsNonNeg^{d}$,
    the modified $d$-parameter $C_{0}$-semigroup
    $%
        \tilde{T}
        \colonequals (
            e^{
                -\BRAKET{\mathbf{t}}{\boldsymbol{\omega}}
            }
            T(\mathbf{t})
        )_{\mathbf{t}\in\realsNonNeg^{d}}
    $
    has a regular unitary dilation.
\end{cor}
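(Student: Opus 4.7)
The plan is to reduce the corollary to \Cref{thm:classification:sig:article-dilation-problem-raj-dahya} by exhibiting a shift vector $\boldsymbol{\omega}$ that makes the generators of $\tilde{T}$ completely dissipative. Let $A_{1},A_{2},\ldots,A_{d} \in \BoundedOps{\HilbertRaum}$ denote the (commuting, bounded) generators of $T$. A direct computation shows that the generators of the rescaled semigroup $\tilde{T}(\mathbf{t}) = e^{-\BRAKET{\mathbf{t}}{\boldsymbol{\omega}}}T(\mathbf{t})$ are precisely $\tilde{A}_{i} = A_{i} - \omega_{i}\onematrix$ for each $i\in\{1,2,\ldots,d\}$. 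Hence by \crefit{thm:classification:sig:article-dilation-problem-raj-dahya}{2}, it suffices to exhibit $\boldsymbol{\omega} \in \realsNonNeg^{d}$ for which $(\tilde{A}_{i})_{i=1}^{d}$ is completely dissipative.

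The first step is to observe that complete dissipativity is insensitive to commutation structure beyond what is already shared by $A_{i}$ and $\onematrix$ (shifts by scalar multiples of the identity preserve commutativity), so the shifted tuple $(\tilde{A}_{i})_{i=1}^{d}$ is again a commuting family of bounded operators. The second and main step is to invoke the \emph{simple sufficient norm criteria for complete dissipativity} announced in the abstract and established in \S{}\ref{sec:results}. These criteria take the form of a bound relating the operator norms $\norm{A_{i}}$ to the shifts $\omega_{i}$, analogously to the single-parameter situation in which $A - \omega\onematrix$ is dissipative whenever $\omega \geq \tfrac{1}{2}\norm{A + A^{\ast}}$, which in particular holds for $\omega \geq \norm{A}$. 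Setting $\omega_{i}$ sufficiently large (e.g.\ $\omega_{i} \geq \norm{A_{i}}$, or whatever constant the explicit norm criterion demands) will force each partial sum appearing in the complete dissipativity condition to become negative semidefinite.

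Concretely, I would proceed as follows. First, expand the dissipation expressions attached to the tuple $(A_{i} - \omega_{i}\onematrix)_{i=1}^{d}$ in terms of the original $A_{i}$; the shifts $\omega_{i}$ produce strictly negative diagonal contributions $-2\omega_{i}\onematrix$, while the cross terms involving products of $A_{j}$'s contribute bounded perturbations controlled by $\prod_{j}\norm{A_{j}}$ (or rather sums thereof, one for each subset of indices). Second, choose the shifts $\omega_{i}$ large enough that these negative diagonal contributions dominate all bounded perturbations uniformly on the unit ball of $\HilbertRaum$; this is possible because the collection of operators entering each inequality is finite and norm-bounded. Third, apply the norm criterion to conclude complete dissipativity of $(\tilde{A}_{i})_{i=1}^{d}$. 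Finally, invoking \crefit{thm:classification:sig:article-dilation-problem-raj-dahya}{1} yields the existence of a regular unitary dilation for $\tilde{T}$.

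The main obstacle is purely notational: one has to track how the shift $\boldsymbol{\omega}$ propagates through the algebraic identities of \S{}\ref{sec:algebra} and verify that, for the specific sufficient norm criterion to be developed in \S{}\ref{sec:results}, the quantity being dominated grows polynomially in $\norm{A_{i}}$ while the dominating term is linear in $\omega_{i}$. Since there is no circularity (the shifts can be chosen as large as one likes), this is ultimately a routine verification once the norm criterion is in hand.
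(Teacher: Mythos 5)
Your proposal is correct and follows the same overall strategy as the paper's proof: identify the generators of the shifted semigroup as $\tilde{A}_{i}=A_{i}-\omega_{i}\onematrix$, show that a sufficiently large shift $\boldsymbol{\omega}\in\realsPos^{d}$ makes these completely dissipative, and conclude via \Cref{thm:classification:sig:article-dilation-problem-raj-dahya}. The difference lies in the intermediate result you lean on. The paper's own proof is a two-line application of \Cref{lemm:self-similarities-imply-perturbations-yield-positive-beta:sig:article-dilation-problem-raj-dahya}, which extracts from the self-similarity identity of \Cref{prop:algebraic-identity-scaled-shifts:sig:article-dilation-problem-raj-dahya} the explicit sufficient condition $\prod_{i=1}^{d}(1+\omega_{i}^{-1})<1+\tfrac{1}{\abs{\beta_{T}}}$, satisfied by any large enough $\boldsymbol{\omega}$. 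You instead route through the sufficient norm criterion of \Cref{thm:relative-norm-bound-generators-implies-dilation:sig:article-dilation-problem-raj-dahya}, choosing $\omega_{i}$ so that $\norm{A_{i}}/\omega_{i}\leq 2^{1/d}-1$; the paper records precisely this argument as an ``alternative proof'' in the remark following that theorem, and there is no circularity, since the norm criterion is derived from the classification theorem and the shift identity rather than from the corollary. Two small inaccuracies, neither fatal: the dissipation operators $S_{\tilde{T},K}$ must be shown \emph{positive}, and the shift contributes the positive term $\omega_{K}\onematrix=\prod_{i\in K}\omega_{i}\,\onematrix$ (multilinear rather than linear in the $\omega_{i}$), which for large $\boldsymbol{\omega}$ dominates the remaining summands, each bounded in norm by $\omega_{K\without K^{\prime}}\prod_{i\in K^{\prime}}\norm{A_{i}}$; your description of ``negative diagonal contributions'' dominating has the orientation reversed (you are tracking $\Re\tilde{A}_{i}$ rather than $S_{\tilde{T},K}=-\Re\tilde{A}_{i}$ for singletons) but carries the same content.
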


In the remainder of \S{}\ref{sec:results} we extend \Cref{thm:classification:sig:article-dilation-problem-raj-dahya}
to arbitrarily many commuting $\Cnought$-semigroups (see \Cref{cor:general-classification:sig:article-dilation-problem-raj-dahya}).
We further explore the set of $\boldsymbol{\omega} \in \reals^{d}$
for which \Cref{cor:dilatable-up-to-exponential-modification:sig:article-dilation-problem-raj-dahya} holds
and provide simple norm conditions sufficient for the existence of regular unitary dilations:

\begin{thm}[Sufficient norm conditions for regular unitary dilations]
\makelabel{thm:relative-norm-bound-generators-implies-dilation:sig:article-dilation-problem-raj-dahya}
    Let $d\in\naturalsPos$
    and $T$ a (necessarily contractive) $d$-parameter $\Cnought$-semigroup
    over a Hilbert space $\HilbertRaum$
    with bounded generators
        $A_{1},A_{2},\ldots,A_{d}$.
    If the generators satisfy
        $%
            \frac{
                \norm{A_{i} + \omega_{i}\cdot\onematrix}
            }{\omega_{i}}
            \leq 2^{1/d} - 1
        $
    for all $i\in\{1,2,\ldots,d\}$
    and some $\boldsymbol{\omega}=(\omega_{i})_{i=1}^{d}\in\realsPos^{d}$,
    then $T$ has a regular unitary dilation.
\end{thm}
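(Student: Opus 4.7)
The plan is to reduce to \Cref{thm:classification:sig:article-dilation-problem-raj-dahya} by verifying that the generators $A_1,\ldots,A_d$ are completely dissipative.

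Set $r := 2^{1/d} - 1$ and write each generator as $A_i = -\omega_i \onematrix + B_i$. The hypothesis is then precisely $\|B_i\| \leq r\,\omega_i$, and the numerical identity $(1+r)^d = 2$ will be the critical threshold at the end of the argument. As a sanity check, $-(A_i + A_i^*) = 2\omega_i\onematrix - (B_i + B_i^*) \geq 2(1-r)\omega_i\onematrix \geq 0$, since $r \leq 1$ for $d \geq 1$, confirming that each $A_i$ is dissipative and hence that $T$ is contractive (as asserted parenthetically in the statement).

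By \Cref{thm:classification:sig:article-dilation-problem-raj-dahya} it suffices to show that the generators are completely dissipative, i.e.\ that each complete dissipation operator $D_{S}$ (for $\emptyset \neq S \subseteq \{1,\ldots,d\}$) from \S{}\ref{sec:definitions} is positive. Using the algebraic identities of \S{}\ref{sec:algebra}, $D_S$ may be expressed as a polynomial in the $A_i,A_i^*$ with $i \in S$. Substituting $A_i = -\omega_i\onematrix + B_i$ and expanding produces, first, a ``pure $\omega$'' contribution that arises on setting every $B_i$ and $B_i^*$ to zero and which is a strictly positive scalar multiple of $\onematrix$ of size $\prod_{i \in S}2\omega_i$ (up to a combinatorial factor independent of the $\omega_i$); and, second, a collection of ``mixed'' terms, each a monomial in the $B_i,B_i^*$ weighted by products of the remaining $\omega_k$.

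The decisive step is a norm-bookkeeping estimate on the mixed terms. For each coordinate $i \in S$ the expansion distributes either a factor of magnitude $\omega_i$ (from the $-\omega_i\onematrix$ piece) or a factor of magnitude at most $\|B_i\| \leq r\omega_i$ (from a $B_i$- or $B_i^*$-piece). Summing the operator norms over every configuration \emph{except} the purely constant one telescopes into $\prod_{i \in S}(\omega_i + \|B_i\|) - \prod_{i \in S}\omega_i \leq \prod_{i \in S}\omega_i\,\big((1+r)^{|S|}-1\big)$. Since $|S| \leq d$, this is dominated by $\prod_{i \in S}\omega_i\,((1+r)^d - 1) = \prod_{i \in S}\omega_i$, which is in turn strictly less than the leading positive contribution $\prod_{i \in S}2\omega_i$. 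Hence $D_S \geq 0$; as $S$ was arbitrary, the generators are completely dissipative and \Cref{thm:classification:sig:article-dilation-problem-raj-dahya} yields the regular unitary dilation of $T$.

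The main obstacle is aligning the combinatorial expansion of $D_S$ cleanly with this telescoping bound so that the identity $(1+r)^d = 2$ arrives at the decisive moment; the specific exponent $2^{1/d}$ in the hypothesis is chosen precisely to make the estimate sharp, so care is needed to avoid slack in the triangle-inequality step that would force a strictly larger threshold than the one stated.
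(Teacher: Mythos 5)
Your argument is correct and is essentially the paper's own proof: the paper organises the same expansion of $S_{T,K}$ around the decomposition $A_{i}=-\omega_{i}\onematrix+B_{i}$ via the shift identity of \Cref{prop:algebraic-identity-scaled-shifts:sig:article-dilation-problem-raj-dahya} and \Cref{prop:dissipation-operators-in-terms-of-deviations:sig:article-dilation-problem-raj-dahya}, arriving at exactly your telescoped bound $\norm{S_{T,K}-\omega_{K}\onematrix}\leq\omega_{K}\big(\prod_{i\in K}(1+\norm{B_{i}}/\omega_{i})-1\big)\leq\omega_{K}\big((1+r)^{d}-1\big)=\omega_{K}$, and then concludes positivity via the notion of circularity (\Cref{prop:strict-circularity-implies-complete-super-dissipativity:sig:article-dilation-problem-raj-dahya}). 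One bookkeeping caveat: with the $2^{-\card{K}}$ normalisation in the definition of $S_{T,K}$, the pure-$\omega$ contribution is $\omega_{K}\onematrix=\prod_{i\in K}\omega_{i}\,\onematrix$, not $\prod_{i\in K}2\omega_{i}$, so the mixed terms are dominated by the leading term only with \emph{equality} in the worst case $\card{K}=d$ (your comparison against $\prod_{i\in K}2\omega_{i}$ mixes normalisations and suggests a factor of $2^{\card{K}}$ of slack that is not there); this still yields $S_{T,K}\geq\zeromatrix$, which is all that complete dissipativity requires.
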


In \S{}\ref{sec:examples}, we investigate complete (super) dissipativity for concrete classes of semigroups.
In the case of normal semigroups, the notions coincide
with the more basic properties of dissipativity and negative spectral bounds respectively.
And by working with a naturally definable class of non-doubly commuting generators,
we demonstrate that our notions are in general not equivalent to these properties.

The paper concludes in \S{}\ref{sec:applications}
with a non-trivial application of complete dissipativity to the von Neumann inequality problem.
We provide a natural generalisation of the problem to our context,
defining the
    \highlightTerm{regular polynomial bounds}
and the
    \highlightTerm{regular von Neumann polynomial inequality problem for multi-parameter $\Cnought$-semigroups}
(see \Cref{defn:polynomial-inequalities-semigroups:sig:article-dilation-problem-raj-dahya}).
We then establish a second characterisation of regular unitary dilations:

\begin{thm}[Classification via polynomial bounds]
\makelabel{thm:regular-polynomial-inequality-iff-completely-dissipative:sig:article-dilation-problem-raj-dahya}
    Let $d\in\naturalsPos$
    and $T$ be a $d$-parameter $\Cnought$-semigroup
    over a Hilbert space $\HilbertRaum$
    with bounded generators. Then the following are equivalent:

    \begin{kompaktenum}{\bfseries (1)}[\rtab]
        \item\punktlabel{1}
            The semigroup $T$ has a regular unitary dilation.
        \item\punktlabel{2}
            $T$ satisfies regular polynomial bounds.
        \item\punktlabel{3}
            $T$ satisfies regular polynomial bounds in a neighbourhood of $\zerovector$.
    \end{kompaktenum}

    \nvraum{1}

\end{thm}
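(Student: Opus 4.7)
The plan is to establish the cyclic chain of implications $(1)\Rightarrow(2)\Rightarrow(3)\Rightarrow(1)$, closing the loop by invoking \Cref{thm:classification:sig:article-dilation-problem-raj-dahya} in the final step.

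For $(1)\Rightarrow(2)$, assume $T$ admits a regular unitary dilation $U_{1},\ldots,U_{d}$ on some larger Hilbert space $\HilbertRaum^{\prime}$ via an isometry $r$. Since the values $U_{i}(t_{i})$ form a commuting family of unitaries (hence normal operators), the joint spectral theorem provides a spectral measure supported on $\Torus^{d}$ under which any polynomial expression in these unitaries is bounded in operator norm by its supremum over $\Torus^{d}$. By the defining relation of a regular unitary dilation, each product $(\prod_{i}T(t_{i}^{-}))^{\ast}\prod_{i}T(t_{i}^{+})$ is a compression $r^{\ast}(\prod_{i}U_{i}(t_{i}))r$, so the corresponding polynomial in $T$-expressions is a compression of the polynomial in $U_{i}$'s. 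Since compression does not increase operator norms, the regular polynomial bounds follow. The implication $(2)\Rightarrow(3)$ is immediate, since (3) only weakens (2) by restricting to a neighbourhood of $\zerovector$.

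The main work lies in $(3)\Rightarrow(1)$. By \Cref{thm:classification:sig:article-dilation-problem-raj-dahya}, it suffices to verify that the generators $A_{1},A_{2},\ldots,A_{d}$ of $T$ are completely dissipative. The idea is to extract infinitesimal information from the polynomial bounds, exploiting the operator-analyticity of $T$ at $\zerovector$, which holds because the $A_{i}$ are bounded and hence $T(\mathbf{t}) = \onematrix + \sum_{i} t_{i} A_{i} + O(\|\mathbf{t}\|^{2})$ uniformly in norm. For each subset $S\subseteq\{1,2,\ldots,d\}$, the operator-positivity condition defining complete dissipativity on the $S$-indexed generators should arise as the leading-order term, in the Taylor expansion about $\mathbf{t}=\zerovector^{+}$, of a specific polynomial expression in $T(\mathbf{t})$ and its adjoints, of the form suggested by the algebraic identities developed in \S\ref{sec:algebra}. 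Normalising by an appropriate factor such as $\prod_{i\in S}t_{i}$ and letting $\mathbf{t}\to\zerovector^{+}$ converts the regular polynomial bounds from $(3)$ into precisely the operator inequalities that constitute complete dissipativity.

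The main obstacle is this final infinitesimal-extraction step: one must locate the right family of test polynomials (one per subset $S\subseteq\{1,2,\ldots,d\}$) whose evaluation at $T(\mathbf{t})$ reproduces the appropriate dissipation operator in the limit, and then justify that the regular polynomial bounds pass to the limit in the operator-positivity sense. The algebraic machinery of \S\ref{sec:algebra} should supply the exact templates for these polynomials, while boundedness of the generators ensures the Taylor expansions are uniform and makes the limit passage essentially routine; the delicate point is pairing up the $2^{d}$ operator inequalities defining complete dissipativity with the correct polynomial witnesses.
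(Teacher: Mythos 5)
Your overall architecture --- the cycle $(1)\Rightarrow(2)\Rightarrow(3)\Rightarrow(1)$, with $(1)\Rightarrow(2)$ via joint diagonalisation of the commuting unitaries plus compression, and $(3)\Rightarrow(1)$ reduced to complete dissipativity through \Cref{thm:classification:sig:article-dilation-problem-raj-dahya} --- is exactly the paper's. The first two implications are essentially complete as you state them; for $(1)\Rightarrow(2)$ one only needs to record that the regular evaluation of a monomial $\prod_{i}X_{i}^{n_{i}}$ at $(T_{1}(t_{1}),\ldots,T_{d}(t_{d}))$ equals $T((\mathbf{n}\odot\mathbf{t})^{-})^{\ast}T((\mathbf{n}\odot\mathbf{t})^{+})=r^{\ast}U(\mathbf{n}\odot\mathbf{t})r$, so the compression identity holds monomial-by-monomial and hence, by linearity, for all Laurent polynomials.

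However, your $(3)\Rightarrow(1)$ stops exactly where the real work begins, and the part you defer is not routine. Two concrete ingredients are missing. First, the test polynomials: for each $K\subseteq\{1,2,\ldots,d\}$ one takes
$p\colonequals\sum_{\isPartition{(C_{1},C_{2})}{K}}\prod_{i\in C_{1}}(1-X_{i}^{-1})\prod_{j\in C_{2}}(1-X_{j})$,
whose regular evaluation at $(T_{1}(t),\ldots,T_{d}(t))$ is $(2t)^{\card{K}}S_{T,K}(t)$, where $S_{T,K}(t)$ is the finite-difference approximant of $S_{T,K}$ obtained by replacing each $-A_{i}$ with $t^{-1}(\onematrix-T_{i}(t))$; boundedness of the generators gives $S_{T,K}(t)\to S_{T,K}$ in norm, and the positive cone is norm-closed, so it suffices to prove $S_{T,K}(t)\geq\zeromatrix$ for small $t$. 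Second --- and this is the step your phrase ``pass to the limit in the operator-positivity sense'' glosses over --- the regular polynomial bounds give only \emph{norm} estimates, never positivity directly. The conversion mechanism is the fact that a self-adjoint $a$ satisfies $a\geq\zeromatrix$ if and only if $\norm{\onematrix-\alpha a}\leq 1$ for all sufficiently small $\alpha\in\realsPos$. One applies the hypothesis not to $p$ but to $p_{\alpha,t}=1-\frac{\alpha}{(2t)^{\card{K}}}p$, computes $p(\boldsymbol{\lambda})=\prod_{i\in K}2\Re(1-\lambda_{i})\in[0,\:4^{\card{K}}]$ on $\Torus^{d}$ (so that $\sup_{\Torus^{d}}\abs{p_{\alpha,t}}\leq 1$ once $\alpha\leq(t/2)^{\card{K}}$), and concludes $\norm{\onematrix-\alpha S_{T,K}(t)}\leq 1$, hence $S_{T,K}(t)\geq\zeromatrix$. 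Without identifying this polynomial, verifying its non-negativity on the torus, and supplying the norm-to-positivity device, the implication $(3)\Rightarrow(1)$ is not established.
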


Using this, we negatively solve the regular von Neumann polynomial inequality problem
for multi-parameter $\Cnought$-semigroups:

\begin{cor}
\makelabel{cor:counter-examples-regular-polynomial-inequality:sig:article-dilation-problem-raj-dahya}
    Let $\HilbertRaum$ be a Hilbert space with $\dim(\HilbertRaum) \geq 2$
    and let $d\in\naturals$ with $d \geq 2$.
    Then there exist $d$-parameter contractive $\Cnought$-semigroups
    with bounded generators which have strictly negative spectral bounds,\footnote{
        The \highlightTerm{spectral bound} of a linear operator
            $A:\dom(A)\subseteq\HilbertRaum\to\HilbertRaum$
        is given by
            $\sup\{\Re \lambda \mid \lambda\in\opSpectrum{A}\}$
        (\cf \cite[Definition~1.12]{EngelNagel2000semigroupTextBook}).
    }
    for which regular polynomial bounds fail.
\end{cor}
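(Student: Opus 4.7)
The plan is to chain together the equivalences already established in the paper. By \Cref{thm:regular-polynomial-inequality-iff-completely-dissipative:sig:article-dilation-problem-raj-dahya}, the failure of the regular polynomial bounds is equivalent to the non-existence of a regular unitary dilation, which by \Cref{thm:classification:sig:article-dilation-problem-raj-dahya} is in turn equivalent to the failure of complete dissipativity of the generators. Hence it suffices to exhibit, for each $d \geq 2$ and each Hilbert space $\HilbertRaum$ with $\dim(\HilbertRaum) \geq 2$, a contractive $d$-parameter $\Cnought$-semigroup with bounded generators, strictly negative spectral bounds, and generators that fail to be completely dissipative.

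For the source of such examples I would appeal to \S{}\ref{sec:examples}, which (per the introduction) supplies a naturally definable class of non-strongly commuting bounded dissipative operators $A_1, A_2$ on a two-dimensional Hilbert space $\HilbertRaum_0$ that are \emph{not} completely dissipative. Starting from such a pair, reduce to the general case as follows: embed $\HilbertRaum_0 \hookrightarrow \HilbertRaum$ as an orthogonal summand (possible since $\dim(\HilbertRaum) \geq 2$), extend $A_1, A_2$ by zero on $\HilbertRaum_0^{\perp}$, and set $A_3 = \ldots = A_d = 0$. The resulting $d$-tuple generates a contractive $d$-parameter $\Cnought$-semigroup over $\HilbertRaum$ with spectral bounds $\leq 0$, and inherits non-complete-dissipativity from the first two coordinates, since the positivity obstruction already lives in the invariant subspace $\HilbertRaum_0$.

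To force strictly negative spectral bounds I would then perturb by replacing each $A_i$ with $\tilde{A}_i \colonequals A_i - \omega \cdot \onematrix$ for a sufficiently small $\omega > 0$. This produces the semigroup $\tilde{T}(\mathbf{t}) = e^{-\omega \sum_{i=1}^{d} t_i}\, T(\mathbf{t})$, which remains contractive and has spectral bound at most $-\omega < 0$ in every coordinate. The main obstacle, and the crux of the argument, is to verify that this perturbation does not accidentally restore complete dissipativity. Too large a choice of $\omega$ would, by \Cref{cor:dilatable-up-to-exponential-modification:sig:article-dilation-problem-raj-dahya}, eventually force dilatability; so one must walk a fine line. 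The saving observation is that the dissipation operators of \Cref{defn:dissipation-operators-and-conditions:sig:article-dilation-problem-raj-dahya} are polynomial expressions in the generators (as extracted from the algebraic identities of \S{}\ref{sec:algebra}), and hence depend continuously on $\omega$. Because the unperturbed system already witnesses a \emph{strict} inequality $\BRAKET{D\xi}{\xi} < 0$ at some concrete vector $\xi$, the same vector continues to witness strict negativity of the perturbed dissipation operator whenever $\omega > 0$ is small enough. Any such $\omega$ then delivers a semigroup with the desired properties, and the corollary follows by running the equivalences of \Cref{thm:classification:sig:article-dilation-problem-raj-dahya,thm:regular-polynomial-inequality-iff-completely-dissipative:sig:article-dilation-problem-raj-dahya} in reverse.
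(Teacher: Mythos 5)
Your argument is correct, and its logical skeleton is exactly the paper's: chain \Cref{thm:regular-polynomial-inequality-iff-completely-dissipative:sig:article-dilation-problem-raj-dahya} with \Cref{thm:classification:sig:article-dilation-problem-raj-dahya} to reduce everything to exhibiting a contractive $d$-parameter $\Cnought$-semigroup with bounded, non-completely-dissipative generators whose spectral bounds are strictly negative, and take the example from the non-strongly-commuting construction of \S{}\ref{sec:examples:non-strongly-comm}. Where you diverge is in how much work you think remains after citing that section. \Cref{prop:non-strongly-comm-example:sig:article-dilation-problem-raj-dahya} is already stated for every $d\geq 2$ and every $\HilbertRaum$ with $\dim(\HilbertRaum)\geq 2$, and its generators lie in $\AlgebraUpperTr_{-1}$, so that $\opSpectrum{A_{i}}=\{-1\}$: contractivity, strictly negative spectral bounds, and $\beta_{T}<0$ all come packaged together, and the paper's proof of the corollary is nothing more than the three citations. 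Your additional scaffolding --- starting from a two-parameter, two-dimensional example, extending by zero, and then applying a small $\boldsymbol{\omega}$-shift while controlling $\beta$ by continuity --- is sound: the zero coordinates contribute only vanishing (hence positive) dissipation operators for any $K$ meeting the new indices, the obstruction for $K=\{1,2\}$ survives because the dissipation operator is block diagonal with the original non-positive block, and the continuity of $\boldsymbol{\omega}\mapsto\beta_{E_{\boldsymbol{\omega}}\cdot T}$ that you invoke is precisely what \eqcref{eq:continuity-of-beta-T:sig:article-dilation-problem-raj-dahya} (via \Cref{prop:algebraic-identity-scaled-shifts:sig:article-dilation-problem-raj-dahya}) provides, while \Cref{cor:dilatable-up-to-exponential-modification:sig:article-dilation-problem-raj-dahya} indeed forces you to keep $\boldsymbol{\omega}$ small. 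But all of this is redundant here. The one thing it genuinely buys is the observation that \emph{any} failure of complete dissipativity, even one occurring for a semigroup whose spectral bounds are merely $\leq 0$, can be pushed into the regime of strictly negative spectral bounds without restoring regular dilatability; the paper does not need this because its concrete example already sits there.
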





\section[Definitions]{Definitions}
\label{sec:definitions}


\firstparagraph
In this paper we fix the following notation.
Let $d\in\naturalsPos$.
Let
    $A,A_{1},A_{2},\ldots,A_{d}\in\BoundedOps{\HilbertRaum}$ be any bounded operators,
    let $C,C_{1},C_{2},K\subseteq\{1,2,\ldots,d\}$,
    and
    let $\pi=(\pi(i))_{i}$ be a (possibly empty) finite sequence of indices from $\{1,2,\ldots,d\}$.

\begin{kompaktitem}
    \item
        We write
            $\realsNonNeg = \{r\in\reals \mid r\geq 0\}$,
            $\realsPos = \{r\in\reals \mid r > 0\}$,
            and
            $\complex^{\times} = \complex \setminus \{0\}$.
        The set $\Torus$ denotes the unit circle in the complex plane $\{z\in\complex \mid \abs{z} = 1\}$.
    \item
        In any algebraic context empty sums and products shall always be taken
        to be the additive and multiplicative identities respectively.
    \item
        Depending on the context
        $\onematrix$ shall denote the identity operator on a space
        and
        $\zeromatrix$ denotes the zero operator on a space or between spaces.
    \item
        In the context of $\reals^{d}$
        let
            $\zerovector = (0,0,\ldots,0)\in\reals^{d}$
            and
            $\onevector = (1,1,\ldots,1)\in\reals^{d}$,
        and let
            $\mathbf{e}_{i} = (0,0,\ldots,\underset{i}{1},\ldots,0)$
        denote the canonical unit vectors for $i\in\{1,2,\ldots,d\}$.
        Also set
            $\mathbf{e}_{C} \colonequals \sum_{i\in C}\mathbf{e}_{i}$.
        For $\mathbf{t}=(t_{i})_{i=1}^{d}\in\reals^{d}$ let
            $%
                \mathbf{t}^{+}
                \colonequals (t_{i}^{+})_{i=1}^{d}
                = (\max\{t_{i},0\})_{i=1}^{d}
                \in \realsNonNeg^{d}
            $
        and
            $%
                \mathbf{t}^{-}
                \colonequals (t_{i}^{-})_{i=1}^{d}
                = (\max\{-t_{i},0\})_{i=1}^{d}
                \in \realsNonNeg^{d}
            $
        denote the positive and negative parts respectively.
        We further define the support
            ${\supp(\mathbf{t}) \colonequals \{i\in\{1,2,\ldots,d\} \mid t_{i} \neq 0\}}$
        as usual.
    \item
        For $\boldsymbol{\omega} = (\omega_{i})_{i=1}^{d} \in \reals^{d}$
        we shall denote $\omega_{K} \colonequals \prod_{i \in K}\omega_{i}$.
    \item
        $\Re A \coloneq \frac{1}{2}(A + A^{\ast})$
        and
        $\Imag A \coloneq \frac{1}{2\imagunit}(A - A^{\ast})$
        denote the self-adjoint operators, referred to as the real and imaginary parts of the operator $A$.
    \item
        We adopt the notation
            $A(\pi) \colonequals \prod_{i=1}^{|\pi|}A_{\pi(i)}$
        and
            $A(C) \colonequals \prod_{i \in C}A_{i}$
            where the order of the indices can be taken to be ascending.
        If these operators commute, then the order of multiplication is irrelevant.
    \item
        We use the notation $A_{i}^{-}$ for $-A_{i}$
        for each $i$
        and define $A^{-}(\pi)$ and $A^{-}(C)$ as above.
    \item
        We write $\isPartition{(C_{1},C_{2})}{K}$ to denote that $\{C_{1},C_{2}\}$ is a partition of $K$.
        We shall frequently compute sums over partitions of products.
        If $a_{1},a_{2},\ldots,a_{d}\in\cal{A}$ are commuting elements
        of some algebra $\cal{A}$ with unit,
        then using binomial expansion one obtains
            $%
                \sum_{\isPartition{(C_{1},C_{2})}{K}}\prod_{i\in C_{1}}a_{i}
                = \sum_{C \subseteq K}\prod_{i\in K}
                    \begin{cases}
                        a_{i} &: &i \in C\\
                        1 &: &i \notin C\\
                    \end{cases}
                = \prod_{i \in K}(1 + a_{i})
            $.
        This observation shall be repeatedly used in computations.
\end{kompaktitem}

To avoid confusion throughout this paper we shall consistently call
self-adjoint elements $a\in\cal{A}$ of a unital $C^{\ast}$-algebra $\cal{A}$
    \highlightTerm{positive}
    (in symbols $a \geq \zeromatrix$)
    if
        $a=b^{\ast}b$ for some $b\in\cal{A}$,
    which holds
    if and only if
        $\opSpectrum{a} \subseteq \realsNonNeg$.
We call $a$ \highlightTerm{strictly positive}
    if
        ${a - c\onematrix \geq \zeromatrix}$
        for some $c\in\realsPos$.
For self-adjoint bounded operators $A\in\BoundedOps{\HilbertRaum}$
one equivalently has that $A$ is
    \highlightTerm{positive}
    if and only if
        $\BRAKET{A\xi}{\xi} \geq 0$
        for all $\xi\in\HilbertRaum$
    and
    \highlightTerm{strictly positive}
    if and only if
        $\BRAKET{A\xi}{\xi} \geq c\norm{\xi}^{2}$
        for all $\xi\in\HilbertRaum$
        and some $c\in\realsPos$.\footnote{%
    The first property is also called \highlightTerm{positive semi-definite}.
}



\subsection[Notions of dilation]{Notions of dilation}
\label{sec:definitions:dilation}

\firstparagraph
As alluded to in the introduction, there is a natural correspondence between
commuting families of (contractive/isometric/unitary) $\Cnought$-semigroups
and multi-parameter (contractive/isometric/unitary) $\Cnought$-semigroups.
Working with the multi-parameter presentation,
we provide definitions of (regular) unitary dilations corresponding to those in the introduction.\footnote{
    Since a multi-parameter $\Cnought$-semigroup is a single object
    as opposed to a collection,
    we drop the term \usesinglequotes{simultaneous}.
}

\begin{defn}
    A \highlightTerm{unitary dilation} of $T$ is a tuple $(U,\HilbertRaum^{\prime},r)$,
    where
        $U$ is a $d$-parameter unitary $\Cnought$-semigroup over a Hilbert space $\HilbertRaum^{\prime}$
        and
        $r\in\BoundedOps{\HilbertRaum}{\HilbertRaum^{\prime}}$ (necessarily isometric),
    such that
        $%
            T(\mathbf{t}) = r^{\ast}U(\mathbf{t})r
        $
    holds for all $\mathbf{t}\in\realsNonNeg^{d}$.
\end{defn}

Note that any $d$-parameter unitary $\Cnought$-semigroup $U$ can
always be (uniquely) extended to an $\topSOT$-continuous representation of
    $(\reals^{d},+,\zerovector)$
via the definition $U(\mathbf{t})=U(\mathbf{t}^{-})^{\ast}U(\mathbf{t}^{+})$
for all $\mathbf{t}\in\reals^{d}$.
By capturing this behaviour,
one obtains the following stronger notion of dilation:

\begin{defn}
\makelabel{defn:dilation-regular:sig:article-dilation-problem-raj-dahya}
    A \highlightTerm{regular unitary dilation} of $T$ is a tuple $(U,\HilbertRaum^{\prime},r)$,
    where
        $U$ is an $\topSOT$-continuous unitary representation
            of $(\reals^{d},+,\zerovector)$
            over a Hilbert space $\HilbertRaum^{\prime}$
        and
        $r\in\BoundedOps{\HilbertRaum}{\HilbertRaum^{\prime}}$ (necessarily isometric),
    such that
        $%
            T(\mathbf{t}^{-})^{\ast}T(\mathbf{t}^{+})
            = r^{\ast}U(\mathbf{t})r
        $
    holds for all $\mathbf{t}\in\reals^{d}$.
\end{defn}

Clearly,
    the existence of a regular unitary dilation
    implies
    the existence of a unitary dilation.
This implication is strict
(\cf \S{}\ref{sec:examples:non-doubly-comm} and \Cref{rem:regular-dilation-stronger-than-unitary-dilation:sig:article-dilation-problem-raj-dahya}).

\begin{defn}
\makelabel{defn:brehmer-operators:sig:article-dilation-problem-raj-dahya}
    The self-adjoint \highlightTerm{Brehmer operators} associated to $T$
    shall be defined via

        \begin{maths}[mc]{c}
            B_{T,K}(t)
                \colonequals
                    \displaystyle
                    \sum_{C \subseteq K}
                        (-1)^{\card{C}}T(t\mathbf{e}_{C})^{\ast}T(t\mathbf{e}_{C})
        \end{maths}

    \continueparagraph
    for all $K\subseteq\{1,2,\ldots,d\}$ and $t\in\realsNonNeg$.
    We say that $T$ satisfies the \highlightTerm{Brehmer positivity criterion},
    if for sufficiently small $t\in\realsPos$
    it holds that $B_{T,K}(t) \geq \zeromatrix$
    for all $K\subseteq\{1,2,\ldots,d\}$.
\end{defn}

\begin{thm}[Ptak, 1985]
\makelabel{thm:brehmer-iff-dilation:sig:article-dilation-problem-raj-dahya}
    Let $d\in\naturalsPos$
    and $T$ be a $d$-parameter $\Cnought$-semigroup over $\HilbertRaum$.
    Then the following statements are equivalent:

    \begin{kompaktenum}{\bfseries (1)}[\rtab]
        \item $T$ has a regular unitary dilation.
        \item $T$ satisfies the Brehmer positivity criterion.
        \item $B_{T,K}(t) \geq \zeromatrix$
            for all $K\subseteq\{1,2,\ldots,d\}$
            and all  $t\in\realsPos$.
    \end{kompaktenum}

    \nvraum{1}

\end{thm}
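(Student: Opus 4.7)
The strategy would be to reduce to the classical Brehmer theorem on commuting contractions by a density-and-continuity argument, together with Naimark's dilation theorem for strongly continuous operator-valued positive-definite functions on $(\reals^{d},+,\zerovector)$. The implication (3)$\Rightarrow$(2) is immediate. For (1)$\Rightarrow$(3), one would fix $t\in\realsPos$ and restrict the continuous regular unitary dilation $(U,\HilbertRaum^{\prime},r)$ to the discrete subgroup $t\integers^{d}$; this produces a regular unitary dilation of the commuting contractions $(T_{i}(t))_{i=1}^{d}$ in the discrete sense, so that Brehmer's theorem forces $B_{T,K}(t)\geq\zeromatrix$ for every $K\subseteq\{1,2,\ldots,d\}$.

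The substance of the argument is (2)$\Rightarrow$(1). First introduce the operator-valued kernel $\Phi\colon\reals^{d}\to\BoundedOps{\HilbertRaum}$ defined by $\Phi(\mathbf{t})\colonequals T(\mathbf{t}^{-})^{\ast}T(\mathbf{t}^{+})$, which is $\topSOT$-continuous, satisfies $\Phi(\zerovector)=\onematrix$, and verifies $\Phi(-\mathbf{t})=\Phi(\mathbf{t})^{\ast}$. Under (2), for each sufficiently small $t\in\realsPos$ the discrete family $(T_{i}(t))_{i=1}^{d}$ satisfies the Brehmer positivity condition, so the classical Brehmer theorem supplies a discrete regular unitary dilation $(V_{t},\HilbertRaum^{\prime}_{t},r_{t})$. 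Unpacking the dilation identity yields $\Phi(t\mathbf{n})=r_{t}^{\ast}V_{t}(\mathbf{n})r_{t}$ for all $\mathbf{n}\in\integers^{d}$, which amounts precisely to the statement that $\Phi$ is operator-valued positive definite on the subgroup $t\integers^{d}$. Letting $t=2^{-k}t_{0}$ range over a sequence tending to zero with $t_{0}$ in the neighbourhood of $\zerovector$ supplied by (2), one obtains positive definiteness of $\Phi$ on the dense subgroup $\bigcup_{k\in\naturalsZero}2^{-k}t_{0}\integers^{d}\subseteq\reals^{d}$; $\topSOT$-continuity of $\Phi$ then propagates it to all of $\reals^{d}$. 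The operator-valued Naimark dilation theorem finally produces a $\topSOT$-continuous unitary representation $U$ of $\reals^{d}$ on a Hilbert space $\HilbertRaum^{\prime}$ together with an isometry $r\colon\HilbertRaum\to\HilbertRaum^{\prime}$ with $\Phi(\mathbf{t})=r^{\ast}U(\mathbf{t})r$ for all $\mathbf{t}\in\reals^{d}$, which is by \Cref{defn:dilation-regular:sig:article-dilation-problem-raj-dahya} a regular unitary dilation of $T$.

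The main obstacle is the density step: the discrete dilations furnished by Brehmer's theorem at different scales $t$ need not cohere as operator systems, so one cannot simply amalgamate them directly. Instead the argument has to be carried out at the level of the single kernel $\Phi$, transferring each scalar positivity inequality from $t\integers^{d}$ to the dense subgroup and then by continuity to all of $\reals^{d}$. The remaining steps---verifying the hypotheses of Naimark's theorem and extracting the dilation from the resulting representation---are thereafter routine.
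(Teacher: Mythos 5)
Your proposal is correct and follows essentially the same route as the proof the paper relies on: the paper does not prove this theorem itself but cites Ptak \cite[Theorem~3.2]{Ptak1985}, whose argument (as the paper notes) rests on Sz.-Nagy's characterisation of regular unitary dilations via operator-valued positive definite functions on topological groups \cite[Theorem~I.7.1]{Nagy1970} --- precisely the kernel $\Phi(\mathbf{t}) = T(\mathbf{t}^{-})^{\ast}T(\mathbf{t}^{+})$ and the Naimark/Sz.-Nagy dilation step you describe. Your handling of the density step (establishing positive definiteness scale by scale on the nested subgroups $2^{-k}t_{0}\integers^{d}$, passing to their dense union, and then invoking weak continuity) is sound and matches Ptak's formulation of the criterion in terms of a sequence of $t$-values tending to $0$.
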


A proof of this can be found in \cite[Theorem~3.2]{Ptak1985}
and is based on the more general characterisation of regular unitary dilations
of operator-valued functions defined on topological groups provided in \cite[Theorem~I.7.1]{Nagy1970},
which in turn is rooted in the correspondence between
    \usesinglequotes{positive definite functions}
    and group representations
(\cf \cite[Proposition~3.14, Theorem~3.20, and Proposition~3.35]{Folland2015bookHarmonicAnalysis}).
Note that the Brehmer positivity criterion in Ptak's paper is more weakly defined
in terms of a sequence of $t$-values converging to $0$.
However, in the proof it is stated that
    the existence of a regular unitary dilation
    implies that
    $B_{T,K}(t) \geq \zeromatrix$ for all $t\in\realsPos$
(\cf (a)~$\Rightarrow$~(b) in \cite[Theorem~3.2]{Ptak1985}).
Thus the above formulation is equivalent to Ptak's.

In general, it can be difficult to verify this condition,
as we need to consider the values of the semigroup at all points
(or at least at points close to $0$).
For semigroups with bounded generators, however,
\Cref{thm:classification:sig:article-dilation-problem-raj-dahya}
establishes a simpler condition purely in terms of the generators.

Note that in the discrete case,
the existence of simultaneous regular unitary dilations for families of commuting contractions
is equivalent to a similarly defined notion of Brehmer positivity
(\cf \cite[Theorem~I.9.1]{Nagy1970}).
In \cite{Barik2022} the authors inspect slightly weaker conditions
and construct simultaneous regular \emph{isometric} dilations
for families of commuting contractions satisfying this.

In addition to these definitions there are weaker variations of dilation
that one may consider.
These all revolve around dilations of certain natural modifications.
In the discrete setting,
    \idest for tuples of commuting bounded operators over $\HilbertRaum$,
there is the notion of $\rho$-dilations (see \exempli \cite[\S{}I.11]{Nagy1970}).
In the continuous setting of multi-parameter $\Cnought$-semigroups one has the following:

\begin{defn}
    Say that $T$ has a \highlightTerm{(regular) unitary dilation up to similarity}
    if it can be written as
        $
            T = s\,\tilde{T}(\cdot)\,s^{-1}
            \colonequals (
                s\,\tilde{T}(\mathbf{t})\,s^{-1}
            )_{\mathbf{t}\in\realsNonNeg^{d}}
        $
    for some invertible
        $s \in \BoundedOps{\HilbertRaum}$
    and some $d$-parameter $\Cnought$-semigroup $\tilde{T}$ over $\HilbertRaum$,
    where $\tilde{T}$ has a (regular) unitary dilation.
\end{defn}

\begin{defn}
    Say that $T$ has a \highlightTerm{(regular) unitary dilation up to exponential equivalence}
    if it can be written as
        $
            T = e^{\BRAKET{\cdot}{\boldsymbol{\omega}}} \tilde{T}(\cdot)
            \colonequals (
                e^{\BRAKET{\mathbf{t}}{\boldsymbol{\omega}}}
                \tilde{T}(\mathbf{t})
            )_{\mathbf{t}\in\realsNonNeg^{d}}
        $
    for some
        $\boldsymbol{\omega}\in\reals^{d}$
    and some $d$-parameter $\Cnought$-semigroup $\tilde{T}$ over $\HilbertRaum$,
    where $\tilde{T}$ has a (regular) unitary dilation.
\end{defn}

In \cite[Theorem~2.2]{LeMerdy1996DilMultiParam} Le~Merdy provides a characterisation of $\Cnought$-semigroups
which are similar to $\Cnought$-semigroups that have a unitary dilation.
The characterisation involves
demonstrating the complete boundedness
of a certain functional calculus map
induced by the resolvents of the generators of the marginal semigroups.
In \cite[Theorem~3.1]{LeMerdy1996DilMultiParam} $d$-parameter contractive $\Cnought$-semigroups
with bounded analytic marginal semigroups
are shown to satisfy this criterion.

By focussing on the smaller class of
    multi-parameter $\Cnought$-semigroups with bounded generators,
we obtain a considerably simpler condition.
This condition is not fulfilled by all multi-parameter $\Cnought$-semigroups,
as shall be demonstrated in \S{}\ref{sec:examples:non-doubly-comm}.
Nonetheless in a similar vein to \cite{LeMerdy1996DilMultiParam}, we demonstrate in
\Cref{cor:dilatable-up-to-exponential-modification:sig:article-dilation-problem-raj-dahya}
that regular unitary dilatability \emph{up to exponential equivalence}
always holds.



\subsection[Dissipativity conditions on the generators]{Dissipativity conditions on the generators}
\label{sec:definitions:dissipation}

\firstparagraph
It turns out that we can capture the existence of regular unitary dilations
in terms of a stronger but natural notion of dissipativity.
Recall that an operator $A\in\BoundedOps{\HilbertRaum}$ is called
\highlightTerm{dissipative}
if
    $\Re \BRAKET{A\xi}{\xi} \leq 0$ for all $\xi \in \HilbertRaum$.%
\footnote{
    Dissipativity is typically defined for unbounded operators
    (\cf \cite[\S{}VI.5~(8)]{Yosida1995fa}),
    but we shall not require this level of generality.
}

\begin{rem}
\makelabel{rem:lumer-phillips:sig:article-dilation-problem-raj-dahya}
    Let $A\in\BoundedOps{\HilbertRaum}$.
    Since the spectrum $\opSpectrum{A}$ is bounded,
    the resolvent set satisfies $\opResolventSet{A}\cap\realsPos\neq\emptyset$.
    The Lumer-Phillips form of the Hille-Yosida theorem
    (see \cite[Theorem~3.3]{Goldstein1985semigroups})
    thus yields that $A$
    is the generator of a contractive $\Cnought$-semigroup
    if and only if $A$ is dissipative.
\end{rem}

It shall be convenient to furthermore call $A\in\BoundedOps{\HilbertRaum}$
\highlightTerm{super dissipative}
if
    $\Re \BRAKET{A\xi}{\xi} \leq -\omega\norm{\xi}^{2}$ for all $\xi \in \HilbertRaum$
    and some $\omega\in\realsPos$.
Now, as an alternative formulation of these terms,
observe that $A$ is dissipative
if and only if $\Re A \leq \zeromatrix$ and $A$ is super dissipative
if and only if $\Re A \leq -\omega\onematrix$
for some $\omega\in\realsPos$.
Based on these observations, we introduce the following generalised notions:

\begin{defn}
\makelabel{defn:dissipation-operators-and-conditions:sig:article-dilation-problem-raj-dahya}
    The self-adjoint \highlightTerm{dissipation operators}
    associated to the generators $A_{1},A_{2},\ldots,A_{d}$ of $T$
    shall be defined by

        \begin{maths}[mc]{c}
            S_{T,K}
                \colonequals
                    2^{-\card{K}}
                    \displaystyle
                    \sum_{\isPartition{(C_{1},C_{2})}{K}}
                        A^{-}(C_{1})^{\ast}A^{-}(C_{2})
        \end{maths}

    \continueparagraph
    for all $K\subseteq\{1,2,\ldots,d\}$.
    For $k\in\naturalsZero$ we shall refer to
        $S_{T,K}$ for $K\subseteq\{1,2,\ldots,d\}$ with $\card{K}=k$
    as the \highlightTerm{$k$\textsuperscript{th} order dissipation operators}.
    Setting

        \begin{maths}[mc]{c}
            \beta_{T}
                \colonequals
                    \displaystyle
                    \min_{K \subseteq \{1,2,\ldots,d\}}
                        \opSpectrum{S_{T,K}},
        \end{maths}

    \continueparagraph
    we say that the generators of $T$ are
    \highlightTerm{completely dissipative} if $\beta_{T} \geq 0$
    (equivalently: $S_{T,K} \geq \zeromatrix$ for each $K\subseteq\{1,2,\ldots,d\}$)
    and \highlightTerm{completely super dissipative} if $\beta_{T} > 0$.
\end{defn}

Observe that for $K=\emptyset$ one has
    $
        S_{T,\emptyset}
        = 2^{-0}A^{-}(\emptyset)^{\ast}A^{-}(\emptyset)
        = \onematrix
    $,
since $\{\emptyset,\emptyset\}$ is the only partition of $K$.
And for $K = \{\alpha\}$ for some $\alpha\in\{1,2,\ldots,d\}$
one has

    \begin{maths}[mc]{c}
        S_{T,\{\alpha\}}
        = 2^{-1}(A^{-}(\{\alpha\})^{\ast}A^{-}(\emptyset) + A^{-}(\emptyset)^{\ast}A^{-}(\{\alpha\}))
        = 2^{-1}((A_{\alpha}^{-})^{\ast}\cdot \onematrix + \onematrix \cdot A_{\alpha}^{-})
        = -\Re A_{\alpha},\\
    \end{maths}

\continueparagraph
since $\{\{\alpha\},\emptyset\}$ is the only partition of $K$.
So if $T$ has completely dissipative generators,
then for each $\alpha\in\{1,2,\ldots,d\}$
    $
        0 \leq \beta_{T}
        \leq \min(\opSpectrum{-\Re A_{\alpha}})
        = -\max(\opSpectrum{\Re A_{\alpha}})
    $,
which, by the correspondence between numerical ranges and spectra for self-adjoint operators
(\cf \cite[Theorem~1.2.1--4]{Gustafson1997numericalRange}), implies
    $%
        \Re \BRAKET{A_{\alpha}\xi}{\xi}
        = \BRAKET{(\Re A_{\alpha})\xi}{\xi}
        \leq 0
    $
    for all $\xi\in\HilbertRaum$,
\idest $A_{\alpha}$ is dissipative.
In a similar way, we see that the complete super dissipativity of the generators of $T$
implies that each of the generators must be super dissipative.
Furthermore, if the generators are normal,
then super dissipativity is equivalent to strict negativity of the spectral bounds.

Hence complete dissipativity extends the notion of dissipativity,
and complete super dissipativity extends the notion of super dissipativity
as well as, in the case of normal generators, strict negativity of the spectral bounds.
In \S{}\ref{sec:examples:non-doubly-comm} it shall be shown
that complete (super) dissipativity
is a strictly stronger property
than (super) dissipativity.




\section[Algebraic identities]{Algebraic identities}
\label{sec:algebra}


\firstparagraph
We now lay out some usual algebraic identities for the computation of dissipation operators.
To start, we observe the following basic recursive relations.
For
    $t\in\realsNonNeg$,
    $K \subseteq \{1,2,\ldots,d\}$,
    and
    $\alpha \in \{1,2,\ldots,d\} \without K$
one has

    \begin{maths}[mc]{rcl}
    \eqtag[eq:recursion:brehmer:sig:article-dilation-problem-raj-dahya]
        B_{T,\emptyset}(t)
            &= &(-1)^{0}T(\zerovector)^{\ast}T(\zerovector)
            = 1\cdot \onematrix^{\ast}\onematrix
            = \onematrix~\text{and}\\
        B_{T,K\cup\{\alpha\}}(t)
            &= &\begin{array}[t]{0l}
                    \sum_{C \subseteq K}
                        (-1)^{\card{C \cup \{\alpha\}}}
                        T(t \mathbf{e}_{C \cup \{\alpha\}})^{\ast}
                        T(t \mathbf{e}_{C \cup \{\alpha\}})\\
                    + \sum_{C \subseteq K}
                        (-1)^{\card{C}}
                        T(t \mathbf{e}_{C})^{\ast}
                        T(t \mathbf{e}_{C})\\
                \end{array}\\
            &= &\begin{array}[t]{0l}
                    \sum_{C \subseteq K}
                        (-1)^{\card{C} + 1}
                        T_{\alpha}(t)^{\ast}T(t \mathbf{e}_{C})^{\ast}
                        T(t \mathbf{e}_{C})T_{\alpha}\\
                    + \sum_{C \subseteq K}
                        (-1)^{\card{C}}
                        T(t \mathbf{e}_{C})^{\ast}
                        T(t \mathbf{e}_{C})\\
                \end{array}\\
            &= &B_{T,K}(t) - T_{\alpha}(t)^{\ast}B_{T,K}(t)T_{\alpha}(t)\\
    \end{maths}

\continueparagraph
as well as

    \begin{maths}[mc]{rcl}
    \eqtag[eq:recursion:dissipation:sig:article-dilation-problem-raj-dahya]
        S_{T,\emptyset}
            &= &2^{-0}
                A^{-}(\emptyset)^{\ast}A^{-}(\emptyset)
            = \onematrix^{\ast} \onematrix
            = \onematrix,~\text{and}\\
        S_{T,K\cup\{\alpha\}}
        &= &2^{-\card{K \cup \{\alpha\}}}
            \sum_{
                \isPartition{%
                    (C_{1},C_{2})
                }{%
                    K \cup \{\alpha\}
                }
            }
                A^{-}(C_{1})^{\ast}A^{-}(C_{2})\\
        &= &\begin{array}[t]{l}
            2^{-\card{K}-1}
            \sum_{\isPartition{(C_{1},C_{2})}{K}}
                A^{-}(C_{1}\cup\{\alpha\})^{\ast}A^{-}(C_{2})\\
            + 2^{-\card{K}-1}
            \sum_{\isPartition{(C_{1},C_{2})}{K}}
                A^{-}(C_{1})^{\ast}A^{-}(C_{2}\cup\{\alpha\})\\
            \end{array}\\
        &= &\begin{array}[t]{l}
            \frac{1}{2}
            2^{-\card{K}}
            \sum_{\isPartition{(C_{1},C_{2})}{K}}
                (A_{\alpha}^{-})^{\ast}A^{-}(C_{1})^{\ast}A^{-}(C_{2})\\
            + \frac{1}{2}
            2^{-\card{K}}
            \sum_{\isPartition{(C_{1},C_{2})}{K}}
                A^{-}(C_{1})^{\ast}A^{-}(C_{2})A^{-}_{\alpha}\\
            \end{array}\\
        &=
            &-\frac{1}{2}(A_{\alpha}^{\ast}S_{T,K} + S_{T,K}A_{\alpha})\\
        &=
            &\frac{1}{2}\{-\Re A_{\alpha},\:S_{T,K}\}
            + \imagunit \frac{1}{2}[\Imag A_{\alpha},\:S_{T,K}],\\
    \end{maths}

\continueparagraph
where $[\cdot,\:\cdot]$ und $\{\cdot,\:\cdot\}$
denote the algebraic \emph{commutator} and \emph{anti-commutator} operations.\footnote{
    For any algebra $\cal{A}$, which is at least a $\integers$-module,
    and for any $a,b\in\cal{A}$ one defines
    $[a,\:b] \colonequals ab - ba$
    and
    $\{a,\:b\} \colonequals ab + ba$.
}
Note that the above recursive relations for $B_{T,\cdot}(\cdot)$
also hold for semigroups with unbounded generators.



\subsection[Dissipation operators under shifts]{Dissipation operators under shifts}
\label{sec:algebra:basic}

\firstparagraph
Shifting the generators by constant multiples of the identity
lead to self-similarities in the dissipation operators.
In this subsection we consider an arbitrary $d$-parameter $\Cnought$-semigroup $T$
with marginals
    $T_{1},T_{2},\ldots,T_{d}$
which have bounded generators
    $A_{1},A_{2},\ldots,A_{d}$
on which we place no further assumptions.
For arbitrary $\boldsymbol{\omega} \in \reals^{d}$
we define the $1$-dimensional continuous representation
    $E_{\boldsymbol{\omega}} \colonequals (e^{\BRAKET{\mathbf{t}}{\boldsymbol{\omega}}})_{\mathbf{t}\in\reals^{d}}$
of the group $(\reals^{d},+,\zerovector)$
and define the modification

    \begin{maths}[mc]{rcl}
        E_{\boldsymbol{\omega}} \cdot T
            &\colonequals
                &(
                    e^{
                        -\BRAKET{\mathbf{t}}{\boldsymbol{\omega}}
                    }
                    T(\mathbf{t})
                )_{\mathbf{t} \in \realsNonNeg^{d}},\\
    \end{maths}

\continueparagraph
which is clearly a $d$-parameter $\Cnought$-semigroup over $\HilbertRaum$
whose marginals have bounded generators
    $\tilde{A}_{i} = A_{i} - \omega_{i}\onematrix$
for each $i\in\{1,2,\ldots,d\}$.

\begin{defn}
    \makelabel{defn:scale-shift:sig:article-dilation-problem-raj-dahya}
    Call the $d$-parameter $\Cnought$-semigroup $E_{\boldsymbol{\omega}} \cdot T$
    the \highlightTerm{$\boldsymbol{\omega}$-shift} of $T$.
\end{defn}

\begin{prop}[Self-similarity under shifts]
\makelabel{prop:algebraic-identity-scaled-shifts:sig:article-dilation-problem-raj-dahya}
    Let $T$ be a $d$-parameter $\Cnought$-semigroup over $\HilbertRaum$
    with bounded generators $A_{1},A_{2},\ldots,A_{d}$.
    Further consider the $\boldsymbol{\omega}$-shift $E_{\boldsymbol{\omega}} \cdot T$ of $T$
    for some $\boldsymbol{\omega} \in \reals^{d}$.
    Then

        \begin{maths}[mc]{c}
        \eqtag[eq:self-similarity-dissipation-operators:sig:article-dilation-problem-raj-dahya]
            S_{E_{\boldsymbol{\omega}} \cdot T, K}
                =
                    \displaystyle
                    \sum_{K^{\prime}\subseteq K}
                        \omega_{K \without K^{\prime}}
                        S_{T,K^{\prime}}\\
        \end{maths}

    \continueparagraph
    holds for all $K \subseteq \{1,2,\ldots,d\}$.
\end{prop}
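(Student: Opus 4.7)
The plan is to reduce the identity to a purely combinatorial expansion based on commutativity. First, I would compute the generators of $E_{\boldsymbol{\omega}} \cdot T$: since $E_{\boldsymbol{\omega}}\cdot T$ has marginals $(e^{-\omega_{i}t}T_{i}(t))_{t\in\realsNonNeg}$, its generators are $\tilde{A}_{i} = A_{i} - \omega_{i}\onematrix$, so $\tilde{A}_{i}^{-} = A_{i}^{-} + \omega_{i}\onematrix$. Because the $A_{i}$ commute, the $A_{i}^{-}$ and the scalars $\omega_{j}\onematrix$ also all commute, so by the binomial expansion observed at the end of Section~\ref{sec:definitions} one has, for every $C \subseteq \{1,\ldots,d\}$,
\[
    \tilde{A}^{-}(C) \;=\; \prod_{i\in C}(A_{i}^{-} + \omega_{i}\onematrix) \;=\; \sum_{C^{\prime}\subseteq C}\omega_{C\setminus C^{\prime}}\,A^{-}(C^{\prime}).
\]

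Next I would substitute this into the definition of $S_{E_{\boldsymbol{\omega}}\cdot T,K}$. Using that each $\omega_{i}$ is real (hence $\omega_{C_{1}\setminus C_{1}^{\prime}}^{\ast} = \omega_{C_{1}\setminus C_{1}^{\prime}}$ and scalars commute past the adjoint and past the operators), one obtains
\[
    S_{E_{\boldsymbol{\omega}}\cdot T,K}
    \;=\; 2^{-\card{K}}\!\!\sum_{\isPartition{(C_{1},C_{2})}{K}}\;\sum_{C_{1}^{\prime}\subseteq C_{1}}\sum_{C_{2}^{\prime}\subseteq C_{2}}\omega_{(C_{1}\setminus C_{1}^{\prime})\cup(C_{2}\setminus C_{2}^{\prime})}\;A^{-}(C_{1}^{\prime})^{\ast}A^{-}(C_{2}^{\prime}).
\]

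The key reindexing is then the following: set $K^{\prime} = C_{1}^{\prime}\cup C_{2}^{\prime}$, so $(C_{1}^{\prime},C_{2}^{\prime})$ is a partition of $K^{\prime}$, and set $E_{j}=C_{j}\setminus C_{j}^{\prime}$, so $(E_{1},E_{2})$ is a partition of $K\setminus K^{\prime}$. Conversely, every choice of $K^{\prime}\subseteq K$, partition $(D_{1},D_{2})$ of $K^{\prime}$, and partition $(E_{1},E_{2})$ of $K\setminus K^{\prime}$ recovers a unique partition $(C_{1},C_{2})=(D_{1}\cup E_{1},D_{2}\cup E_{2})$ of $K$ together with the subsets $C_{j}^{\prime}=D_{j}\subseteq C_{j}$. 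Under this bijection the weight becomes $\omega_{E_{1}\cup E_{2}} = \omega_{K\setminus K^{\prime}}$, which no longer depends on $(E_{1},E_{2})$.

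Finally I would collect terms: summing over the $2^{\card{K\setminus K^{\prime}}}$ partitions of $K\setminus K^{\prime}$ contributes a factor of $2^{\card{K\setminus K^{\prime}}}$, and combined with the prefactor $2^{-\card{K}}$ it leaves $2^{-\card{K^{\prime}}}$, which is exactly the prefactor in the definition of $S_{T,K^{\prime}}$. This yields
\[
    S_{E_{\boldsymbol{\omega}}\cdot T,K}
    \;=\; \sum_{K^{\prime}\subseteq K}\omega_{K\setminus K^{\prime}}\cdot 2^{-\card{K^{\prime}}}\!\!\sum_{\isPartition{(D_{1},D_{2})}{K^{\prime}}}\!\!A^{-}(D_{1})^{\ast}A^{-}(D_{2})
    \;=\; \sum_{K^{\prime}\subseteq K}\omega_{K\setminus K^{\prime}}\,S_{T,K^{\prime}},
\]
which is the claim. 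The only real subtlety lies in the change-of-variables step; the rest is bookkeeping, and commutativity of the $A_{i}$ is what keeps the expansions free of operator-ordering complications.
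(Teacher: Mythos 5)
Your proof is correct, but it takes a genuinely different route from the paper. The paper proves \eqcref{eq:self-similarity-dissipation-operators:sig:article-dilation-problem-raj-dahya} by induction on $\card{K}$, using the recursion $S_{T,K\cup\{\alpha\}} = -\tfrac{1}{2}(A_{\alpha}^{\ast}S_{T,K} + S_{T,K}A_{\alpha})$ from \eqcref{eq:recursion:dissipation:sig:article-dilation-problem-raj-dahya}: one expands $\tilde{A}_{\alpha} = A_{\alpha} - \omega_{\alpha}\onematrix$ in the recursion for the shifted semigroup, applies the inductive hypothesis, and regroups the resulting sum over $K^{\prime}\subseteq K\cup\{\alpha\}$ according to whether $\alpha\in K^{\prime}$. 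You instead expand $\tilde{A}^{-}(C) = \sum_{C^{\prime}\subseteq C}\omega_{C\without C^{\prime}}A^{-}(C^{\prime})$ directly in the definition of $S_{E_{\boldsymbol{\omega}}\cdot T,K}$ and perform a closed-form re-indexing; the bijection $(C_{1},C_{2},C_{1}^{\prime},C_{2}^{\prime})\leftrightarrow(K^{\prime},D_{1},D_{2},E_{1},E_{2})$ is stated correctly, the weight $\omega_{K\without K^{\prime}}$ is indeed independent of the partition $(E_{1},E_{2})$ of $K\without K^{\prime}$, and the count of $2^{\card{K\without K^{\prime}}}$ ordered partitions matches the paper's convention (ordered pairs of disjoint, possibly empty sets), so the prefactor bookkeeping $2^{-\card{K}}\cdot 2^{\card{K\without K^{\prime}}} = 2^{-\card{K^{\prime}}}$ is right. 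Your argument has the advantage of making the combinatorial structure of the identity fully explicit in one pass and of not depending on the recursion having been established first; the paper's induction trades that transparency for lighter notation, pushing all the multi-index work into the single-step regrouping. Both are complete proofs.
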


    \begin{proof}
        We prove this by induction over the size of $K$
        and using the recursion laid out in \eqcref{eq:recursion:dissipation:sig:article-dilation-problem-raj-dahya}.
        For $K = \emptyset$, the left- and right-hand sides of \eqcref{eq:self-similarity-dissipation-operators:sig:article-dilation-problem-raj-dahya}
        are both $\onematrix$ and thus equal.
        Let $K \subseteq \{1,2,\ldots,d\}$ and $\alpha\in\{1,2,\ldots,d\} \without K$.
        Assume that \eqcref{eq:self-similarity-dissipation-operators:sig:article-dilation-problem-raj-dahya}
        holds for $K$. Then

            \begin{longmaths}[mc]{RCL}
                S_{E_{\boldsymbol{\omega}} \cdot T, K \cup \{\alpha\}}
                    &\eqcrefoverset{eq:recursion:dissipation:sig:article-dilation-problem-raj-dahya}{=}
                        &-\frac{1}{2}(
                            \tilde{A}_{\alpha}S_{E_{\boldsymbol{\omega}} \cdot T, K}
                            + S_{E_{\boldsymbol{\omega}} \cdot T, K}\tilde{A}_{\alpha}^{\ast}
                        )\\
                    &=
                        &-\frac{1}{2}(
                            (A_{\alpha} - \omega_{\alpha}\onematrix)
                            S_{E_{\boldsymbol{\omega}} \cdot T, K}
                            + S_{E_{\boldsymbol{\omega}} \cdot T, K}
                                (A_{\alpha}^{\ast} - \omega_{\alpha}\onematrix)
                        )\\
                    &\textoverset{ind.}{=}
                        &-\frac{1}{2}
                        \displaystyle
                        \sum_{K^{\prime} \subseteq K}
                            \omega_{K \without K^{\prime}}
                            \cdot
                            (
                                (A_{\alpha} - \omega_{\alpha}\onematrix)
                                S_{T,K^{\prime}}
                                +
                                S_{T,K^{\prime}}
                                (A_{\alpha}^{\ast} - \omega_{\alpha}\onematrix)
                            )\\
                    &=
                        &\displaystyle
                        \sum_{K^{\prime} \subseteq K}
                            \omega_{K \without K^{\prime}}\omega_{\alpha}
                                S_{T,K^{\prime}}
                        -
                        \frac{1}{2}
                        \displaystyle
                        \sum_{K^{\prime}\subseteq K}
                            \omega_{K \without K^{\prime}}
                            \cdot
                            (
                                A_{\alpha}
                                S_{T,K^{\prime}}
                                +
                                S_{T,K^{\prime}}
                                A_{\alpha}^{\ast}
                            )\\
                    &\eqcrefoverset{eq:recursion:dissipation:sig:article-dilation-problem-raj-dahya}{=}
                        &\displaystyle
                        \sum_{K^{\prime}\subseteq K}
                            \omega_{K \without K^{\prime}}\omega_{\alpha}
                                S_{T,K^{\prime}}
                        +
                        \displaystyle
                        \sum_{K^{\prime}\subseteq K}
                            \omega_{K \without K^{\prime}}
                                S_{T,K^{\prime}\cup\{\alpha\}}\\
                    &=
                        &\displaystyle
                        \sum_{K^{\prime}\subseteq K}
                            \omega_{(K\cup\{\alpha\}) \without K^{\prime}}
                                S_{T,K^{\prime}}
                        +
                        \displaystyle
                        \sum_{K^{\prime}\subseteq K}
                            \omega_{(K\cup\{\alpha\}) \without (K^{\prime}\cup\{\alpha\})}
                                S_{T,K^{\prime}\cup\{\alpha\}}\\
                    &=
                        &\displaystyle
                        \sum_{K^{\prime} \subseteq K\cup\{\alpha\}}
                            \omega_{(K\cup\{\alpha\}) \without K^{\prime}}
                                S_{T,K^{\prime}},\\
            \end{longmaths}

        \continueparagraph
        \idest \eqcref{eq:self-similarity-dissipation-operators:sig:article-dilation-problem-raj-dahya}
        holds for $K\cup\{\alpha\}$.
        Hence the claim holds by induction.
    \end{proof}



\subsection[Dissipation operators under inversions]{Dissipation operators under inversions}
\label{sec:algebra:inverse}

\firstparagraph
This subsection is independent of the rest of the paper,
and provides a connection to inverse infinitesimal generators,
which are of relatively recent interest in semigroup theory
in particular in the unbounded case (\cf \cite{GomilkoZwart2007invGenRu,Zwart2007invGenEn}).

Let $P\subseteq\{1,2,\ldots,d\}$.
We shall assume that the generators $A_{1},A_{2},\ldots,A_{d}$ of $T$ are bounded,
and that $A_{i}$ is invertible for each $i \in P$.
We then define
    $A_{P,i} \colonequals A_{i}^{-1}$ for $i\in P$
and
    $A_{P,i} \colonequals A_{i}$ for $i\in\{1,2,\ldots,d\} \without P$.
Since the $A_{i}$ commute, one clearly has that the $A_{P,i}$ commute.
We can thus define the $d$-parameter $\Cnought$-semigroup

    \begin{maths}[mc]{rcl}
        T_{P}
            \colonequals
                (
                    e^{
                        \sum_{i=1}^{d}
                        t_{i}A_{P,i}
                    }
                )_{\mathbf{t}\in\realsNonNeg^{d}}\\
    \end{maths}

\continueparagraph
over $\HilbertRaum$, which has $A_{P,1},A_{P,2},\ldots,A_{P,d}$ as its generators.

\begin{defn}
    Call $T_{P}$ the \highlightTerm{$P$-inversion} associated to the semigroup $T$.
\end{defn}

Observe that for $K\subseteq\{1,2,\ldots,d\}$

    \begin{longmaths}[t]{CL}
            &A^{-}(K \cap P)^{\ast}
            S_{T_{P},K}
            A^{-}(K \cap P)\\
        = &2^{-\card{K}}
            \displaystyle
            \sum_{\isPartition{(C_{1},C_{2})}{K}}
                A^{-}(K \cap P)^{\ast}
                \Big(
                    \displaystyle
                    \prod_{i\in C_{1}}A_{P,i}^{-}
                \Big)^{\ast}
                \Big(
                    \displaystyle
                    \prod_{i\in C_{2}}A_{P,i}^{-}
                \Big)
                A^{-}(K \cap P)\\
        = &2^{-\card{K}}
            \displaystyle
            \sum_{\isPartition{(C_{1},C_{2})}{K}}
            \begin{array}[t]{0l}
                \Big(
                    \displaystyle
                    \prod_{i\in C_{1} \without P}A^{-}_{i}
                    \displaystyle
                    \prod_{i\in C_{1} \cap P}(A^{-}_{i})^{-1}
                    \displaystyle
                    \prod_{i \in K \cap P}A^{-}_{i}
                \Big)^{\ast}\\
                \cdot\Big(
                    \displaystyle
                    \prod_{i\in C_{2} \without P}A^{-}_{i}
                    \displaystyle
                    \prod_{i \in C_{2} \cap P}(A^{-}_{i})^{-1}
                    \displaystyle
                    \prod_{i\in K \cap P}A^{-}_{i}
                \Big)\\
            \end{array}\\
        = &2^{-\card{K}}
            \displaystyle
            \sum_{\isPartition{(C_{1},C_{2})}{K}}
                \Big(
                    \displaystyle
                    \prod_{i\in C_{1} \without P}A^{-}_{i}
                    \displaystyle
                    \prod_{i \in (K \without C_{1}) \cap P}A^{-}_{i}
                \Big)^{\ast}
                \cdot\Big(
                    \displaystyle
                    \prod_{i\in C_{2} \without P}A^{-}_{i}
                    \displaystyle
                    \prod_{i\in (K \without C_{2}) \cap P}A^{-}_{i}
                \Big)\\
        = &2^{-\card{K}}
            \displaystyle
            \sum_{\isPartition{(C_{1},C_{2})}{K}}
                \Big(
                    \displaystyle
                    \prod_{i\in C_{1} \without P}A^{-}_{i}
                    \displaystyle
                    \prod_{i \in C_{2} \cap P}A^{-}_{i}
                \Big)^{\ast}
                \cdot\Big(
                    \displaystyle
                    \prod_{i\in C_{2} \without P}A^{-}_{i}
                    \displaystyle
                    \prod_{i\in C_{1} \cap P}A^{-}_{i}
                \Big)\\
        = &2^{-\card{K}}
            \displaystyle
            \sum_{\isPartition{(C_{1},C_{2})}{K}}
                A^{-}((C_{1} \without P)\cup(C_{2} \cap P))^{\ast}
                A^{-}((C_{2} \without P)\cup(C_{1} \cap P))\\
        = &2^{-\card{K}}
            \displaystyle
            \sum_{\isPartition{(\tilde{C}_{1},\tilde{C}_{2})}{K}}
                A^{-}(\tilde{C}_{1})^{\ast}
                A^{-}(\tilde{C}_{2})
        \textoverset{Defn}{=}
            S_{T,K},\\
    \end{longmaths}

\continueparagraph
where equality between the final lines hold,
since
  $(C_{1},\:C_{2})\mapsto((C_{1} \without P)\cup(C_{2} \cap P),\:(C_{2} \without P)\cup(C_{1} \cap P))$
clearly constitutes a bijection on the set of partitions of $K$.

Due to the above relation and the invertibility of the generators,
if $\beta_{T_{P}} \geq 0$, it follows that

    \begin{maths}[mc]{rcl}
        \BRAKET{S_{T,K}\xi}{\xi}
            &=
                &\BRAKET{
                    A^{-}(K\cap P)^{\ast}S_{T_{P},K}A^{-}(K\cap P)
                    \xi
                }{\xi}\\
            &=
                &\BRAKET{
                    S_{T_{P},K}
                    \,A^{-}(K\cap P)\xi
                }{A^{-}(K\cap P)\xi}\\
            &\geq
                &\min \opSpectrum{S_{T_{P},K}} \cdot \norm{A^{-}(K\cap P)\xi}^{2}\\
            &\geq
                &\beta_{T_{P}} \cdot \norm{A^{-}(K\cap P)\xi}^{2}\\
            &\geq
                &\beta_{T_{P}} \cdot \norm{A^{-}(K\cap P)^{-1}}^{-2}\norm{\xi}^{2}\\
    \end{maths}

\continueparagraph
for all $\xi\in\HilbertRaum$ and $K\subseteq\{1,2,\ldots,d\}$.
Hence we have shown

    \begin{maths}[mc]{c}
    \eqtag[eq:p-inversion-inequality:sig:article-dilation-problem-raj-dahya]
        \beta_{T_{P}} \geq 0
            \Longrightarrow
                \beta_{T}
                \geq
                    \dfrac{
                        \beta_{T_{P}}
                    }{
                        \displaystyle
                        \max_{K \subseteq \{1,2,\ldots,d\}}
                            \norm{A^{-}(K\cap P)^{-1}}^{2}
                    }
                =
                    \dfrac{
                        \beta_{T_{P}}
                    }{
                        \displaystyle
                        \max_{K \subseteq P}
                            \norm{A^{-}(K)^{-1}}^{2}
                    }
                \geq 0\\
    \end{maths}

\continueparagraph
and, since $(T_{P})_{P}=T$ clearly holds, it follows analogously that

    \begin{maths}[mc]{c}
    \eqtag[eq:p-inversion-dual-inequality:sig:article-dilation-problem-raj-dahya]
        \beta_{T} \geq 0
            \Longrightarrow
                \beta_{T_{P}}
                \geq
                    \dfrac{
                        \beta_{(T_{P})_{P}}
                    }{
                        \displaystyle
                        \max_{K \subseteq P}
                           \norm{A_{P}^{-}(K)^{-1}}^{2}
                    }
                =
                    \dfrac{
                        \beta_{T}
                    }{
                        \displaystyle
                        \max_{K \subseteq P}
                            \norm{A^{-}(K)}^{2}
                    }
                \geq 0.\\
    \end{maths}

The implications and inequalities in
    \eqcref{eq:p-inversion-inequality:sig:article-dilation-problem-raj-dahya}
    and
    \eqcref{eq:p-inversion-dual-inequality:sig:article-dilation-problem-raj-dahya}
yield
    $\beta_{T}\geq 0$ if and only if $\beta_{T_{P}}\geq 0$,
as well as
    $\beta_{T} > 0$ if and only if $\beta_{T_{P}} > 0$.
Putting this together yields

\begin{prop}
    Let $T$ be a $d$-parameter $\Cnought$-semigroup over $\HilbertRaum$
    with bounded generators $A_{1},A_{2},\ldots,A_{d}$.
    Further let $P \subseteq \{1,2,\ldots,d\}$
    and assume that $A_{i}$ is invertible for $i \in P$.
    Then $T$ is completely dissipative (\respectively completely super dissipative)
    if and only if $T_{P}$ is.
\end{prop}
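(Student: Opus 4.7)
The plan is to leverage the algebraic identity and quantitative inequalities that are already established in the passage immediately preceding the proposition statement. In particular, the combinatorial computation above establishes the key intertwining relation
\[
    A^{-}(K \cap P)^{\ast}\,S_{T_{P},K}\,A^{-}(K \cap P) \;=\; S_{T,K}
\]
for every $K \subseteq \{1,2,\ldots,d\}$, by means of the bijection $(C_1,C_2)\mapsto((C_1\setminus P)\cup(C_2\cap P),\,(C_2\setminus P)\cup(C_1\cap P))$ on partitions of $K$. Once this identity is in hand, bounds \eqcref{eq:p-inversion-inequality:sig:article-dilation-problem-raj-dahya} and \eqcref{eq:p-inversion-dual-inequality:sig:article-dilation-problem-raj-dahya} were derived, each of the form
\[
    \beta_{T} \;\geq\; \frac{\beta_{T_{P}}}{M_{1}}
    \qquad\text{and}\qquad
    \beta_{T_{P}} \;\geq\; \frac{\beta_{T}}{M_{2}},
\]
where $M_{1} = \max_{K\subseteq P}\norm{A^{-}(K)^{-1}}^{2}$ and $M_{2} = \max_{K\subseteq P}\norm{A^{-}(K)}^{2}$ are both finite and strictly positive (finiteness of $M_1$ uses the invertibility assumption on $A_i$ for $i\in P$; both factors are nonzero because $A^-(\emptyset) = \onematrix$ contributes to the max).

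From these two displayed inequalities the proposition is essentially immediate. For the \textbf{completely dissipative} equivalence, I would simply read off: $\beta_{T_{P}} \geq 0 \Rightarrow \beta_{T} \geq 0$ from the first bound, and $\beta_{T} \geq 0 \Rightarrow \beta_{T_{P}} \geq 0$ from the second, giving the iff. For the \textbf{completely super dissipative} equivalence, I would observe that since $M_{1},M_{2}\in\realsPos$ are both finite, the same two bounds propagate strict positivity in both directions: $\beta_{T_{P}} > 0 \Rightarrow \beta_{T} \geq \beta_{T_{P}}/M_{1} > 0$, and symmetrically $\beta_{T} > 0 \Rightarrow \beta_{T_{P}} > 0$.

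Since the proposition is formally stated only after the algebra has been carried out in the main text, the proof itself reduces to a short paragraph citing the identity and the two bounds and extracting the iff in both the weak (nonnegativity) and strict (positivity) senses. The only subtle point to mention explicitly is that the denominators $M_1, M_2$ are both finite and strictly positive --- finiteness of $M_1$ is precisely where the invertibility hypothesis on $\{A_i\}_{i\in P}$ is used, while finiteness of $M_2$ is automatic from the boundedness assumption on the generators. There is no real obstacle to overcome at this stage: the heavy lifting has already been done in establishing the intertwining identity, so the proposition serves essentially as a clean summary statement of the inequalities \eqcref{eq:p-inversion-inequality:sig:article-dilation-problem-raj-dahya}--\eqcref{eq:p-inversion-dual-inequality:sig:article-dilation-problem-raj-dahya} together with the trivial observation that $(T_P)_P = T$.
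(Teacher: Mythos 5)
Your proposal is correct and follows essentially the same route as the paper, whose ``proof'' of this proposition is precisely the preceding discussion: the intertwining identity $A^{-}(K\cap P)^{\ast}S_{T_{P},K}A^{-}(K\cap P)=S_{T,K}$, the two quantitative bounds \eqcref{eq:p-inversion-inequality:sig:article-dilation-problem-raj-dahya} and \eqcref{eq:p-inversion-dual-inequality:sig:article-dilation-problem-raj-dahya}, and the observation $(T_{P})_{P}=T$. Your remarks on the finiteness and strict positivity of the denominators match the paper's (implicit) use of the invertibility and boundedness hypotheses.
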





\section[Main results]{Main results}
\label{sec:results}


\firstparagraph
Our goal is to relate regular unitary dilatability to the notions of complete (super) dissipativity.
To achieve this, we first establish relations between the Brehmer and dissipation operators.



\begin{prop}
\makelabel{prop:asymptotic-behaviour-brehmer-ops:sig:article-dilation-problem-raj-dahya}
    Let $T$ be a $d$-parameter $\Cnought$-semigroup over $\HilbertRaum$
    with bounded generators $A_{1},A_{2},\ldots,A_{d}$.
    Then for all $t\in\realsNonNeg$ and $K\subseteq\{1,2,\ldots,d\}$

        \begin{maths}[mc]{rcl}
        \eqtag[eq:brehmer-dissipation-operators:sig:article-dilation-problem-raj-dahya]
            B_{T,K}(t)
                &= &2^{\card{K}}t^{\card{K}}S_{T,K}
                    + (-1)^{\card{K}}t^{\card{K}+1}\Delta_{T,K}(t),\\
        \end{maths}

    \continueparagraph
    where $\Delta_{T,K}$ is an operator-valued function which is norm bounded on compact subsets of $\realsNonNeg$.
\end{prop}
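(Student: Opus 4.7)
The plan is to prove \eqcref{eq:brehmer-dissipation-operators:sig:article-dilation-problem-raj-dahya} by induction on $\card{K}$, running the recursions \eqcref{eq:recursion:brehmer:sig:article-dilation-problem-raj-dahya} and \eqcref{eq:recursion:dissipation:sig:article-dilation-problem-raj-dahya} in parallel. The base case $K = \emptyset$ is immediate, since $B_{T,\emptyset}(t) = \onematrix = 2^{0}t^{0}S_{T,\emptyset}$, so we may take $\Delta_{T,\emptyset}(t) \equiv \zeromatrix$, which is trivially norm bounded on compacts.

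For the inductive step, fix $K \subseteq \{1,2,\ldots,d\}$ and $\alpha \in \{1,2,\ldots,d\} \without K$, and Taylor-expand $T_{\alpha}(t) = e^{tA_{\alpha}} = \onematrix + tA_{\alpha} + t^{2}F_{\alpha}(t)$, where $F_{\alpha}(t) \colonequals \sum_{k \geq 0}\tfrac{t^{k}A_{\alpha}^{k+2}}{(k+2)!}$ satisfies $\norm{F_{\alpha}(t)} \leq \tfrac{1}{2}\norm{A_{\alpha}}^{2}e^{t\norm{A_{\alpha}}}$ and is thus norm bounded on compact subsets of $\realsNonNeg$. Substituting this expansion together with the inductive hypothesis $B_{T,K}(t) = 2^{\card{K}}t^{\card{K}}S_{T,K} + (-1)^{\card{K}}t^{\card{K}+1}\Delta_{T,K}(t)$ into the algebraic identity
\begin{equation*}
    T_{\alpha}(t)^{\ast}B_{T,K}(t)T_{\alpha}(t) - B_{T,K}(t)
    = t(A_{\alpha}^{\ast}B_{T,K}(t) + B_{T,K}(t)A_{\alpha}) + R_{K,\alpha}(t),
\end{equation*}
where $R_{K,\alpha}(t)$ collects all contributions containing at least one factor of $F_{\alpha}(t)$, $F_{\alpha}(t)^{\ast}$, or the product $t^{2}A_{\alpha}^{\ast}B_{T,K}(t)A_{\alpha}$, isolates a leading term of order $t^{\card{K}+1}$, namely $2^{\card{K}}t^{\card{K}+1}(A_{\alpha}^{\ast}S_{T,K} + S_{T,K}A_{\alpha}) = -2^{\card{K}+1}t^{\card{K}+1}S_{T,K\cup\{\alpha\}}$ by \eqcref{eq:recursion:dissipation:sig:article-dilation-problem-raj-dahya}. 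Every remaining contribution carries a factor of at least $t^{\card{K}+2}$, with operator coefficient norm bounded on compacts by virtue of the inductive hypothesis on $\Delta_{T,K}$ together with the compact-uniform bound on $F_{\alpha}(t)$.

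Applying \eqcref{eq:recursion:brehmer:sig:article-dilation-problem-raj-dahya} to flip signs then yields
\begin{equation*}
    B_{T,K\cup\{\alpha\}}(t) = 2^{\card{K}+1}t^{\card{K}+1}S_{T,K\cup\{\alpha\}} + (-1)^{\card{K}+1}t^{\card{K}+2}\Delta_{T,K\cup\{\alpha\}}(t),
\end{equation*}
where the parity $\card{K}+1 = \card{K\cup\{\alpha\}}$ matches the required sign, and $\Delta_{T,K\cup\{\alpha\}}(t)$, defined as $(-1)^{\card{K}+1}t^{-(\card{K}+2)}$ times the collected remainder, is norm bounded on compacts. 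This closes the induction. The only obstacle is the careful bookkeeping of signs and orders when multiplying two quadratic Taylor expansions of $T_{\alpha}^{\ast}$ and $T_{\alpha}$ against the two-term expression for $B_{T,K}(t)$; there is no conceptual subtlety, and all remainders are controlled because $A_{\alpha}$ is bounded.
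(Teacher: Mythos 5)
Your proof is correct, but it takes a genuinely different route from the paper. The paper argues directly: it expands each $T(t\mathbf{e}_{C})^{\ast}T(t\mathbf{e}_{C})$ as an absolutely convergent power series in $t$, uses the alternating-sum identity $\sum_{C \supseteq P}(-1)^{\card{C}} = 0$ (for $P \subsetneq K$) to kill every monomial whose index sequences fail to cover $K$, identifies the surviving minimal-degree terms ($k+l=\card{K}$) with $2^{\card{K}}S_{T,K}$ via a bijection between covering injective pairs and partitions of $K$ (with multiplicity $k!\,l!$), and bounds the tail by an explicit entire power series. You instead induct on $\card{K}$, running the recursion $B_{T,K\cup\{\alpha\}}(t) = B_{T,K}(t) - T_{\alpha}(t)^{\ast}B_{T,K}(t)T_{\alpha}(t)$ in parallel with $S_{T,K\cup\{\alpha\}} = -\tfrac{1}{2}(A_{\alpha}^{\ast}S_{T,K} + S_{T,K}A_{\alpha})$ and a second-order Taylor expansion of $T_{\alpha}(t)$. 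Your bookkeeping checks out: the cross terms $t(A_{\alpha}^{\ast}B_{T,K}+B_{T,K}A_{\alpha})$ produce exactly $-2^{\card{K}+1}t^{\card{K}+1}S_{T,K\cup\{\alpha\}}$ plus an $O(t^{\card{K}+2})$ correction from the $\Delta_{T,K}$ part, and every term in your remainder $R_{K,\alpha}(t)$ contains both a factor $t^{2}$ and a factor $B_{T,K}(t) = t^{\card{K}}\cdot(\text{compact-bounded})$, so it is indeed $t^{\card{K}+2}$ times something bounded on compacts; the sign flip from the Brehmer recursion then matches the parity $(-1)^{\card{K}+1}$. One point worth stating explicitly in a write-up is that the factorisation of $t^{\card{K}}$ out of $B_{T,K}(t)$ inside $R_{K,\alpha}(t)$ uses the \emph{full} inductive formula, not merely the boundedness of $\Delta_{T,K}$. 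What each approach buys: the paper's computation yields an explicit series representation and an explicit norm bound for $\Delta_{T,K}$, and is independent of the recursions; yours is shorter, avoids the combinatorial argument entirely, and reuses the structural identities of \S{}\ref{sec:algebra} that the rest of the paper depends on anyway.
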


    \begin{proof}
        Let $K\subseteq\{1,2,\ldots,d\}$ and $t\in\realsNonNeg$ be arbitrary.
        Relying on the boundedness of the generators
        we can expand the expression defining the Brehmer operator $B_{T,K}(t)$
        in terms of an absolute convergent power series:

            \begin{longmaths}[mc]{RCL}
                B_{T,K}(t)
                    &= &\displaystyle
                        \sum_{C\subseteq K}
                            (-1)^{\card{C}}
                            T(t\mathbf{e}_{C})^{\ast}
                            T(t\mathbf{e}_{C})\\
                    &= &\displaystyle
                        \sum_{C \subseteq K}
                            (-1)^{\card{C}}
                            \Big(
                                e^{t\sum_{i \in C}A_{i}}
                            \Big)^{\ast}
                            e^{t\sum_{i \in C}A_{i}}\\
                    &= &\displaystyle
                        \sum_{k,l\in\naturalsZero}
                        \frac{t^{k+l}}{k!l!}
                        \displaystyle
                            \sum_{C\subseteq K}
                            (-1)^{\card{C}}
                            \Big(
                                \sum_{i \in C}A_{i}^{\ast}
                            \Big)^{k}
                            \Big(
                                \sum_{i \in C}A_{i}
                            \Big)^{l}\\
                    &= &\displaystyle
                        \sum_{k,l\in\naturalsZero}
                        \frac{t^{k+l}}{k!l!}
                        \displaystyle
                            \sum_{C\subseteq K}
                        \sum_{\substack{
                            (\pi_{1},\pi_{2})\in K^{k}\times K^{l}:\hraum\\
                            \ran(\pi_{1}) \cup \ran(\pi_{2}) \subseteq C\hraum
                        }}
                            (-1)^{\card{C}}
                            \cdot
                            \underbrace{
                                \Big(
                                    \displaystyle
                                    \prod_{i=1}^{k}
                                    A_{\pi_{1}(i)}
                                \Big)^{\ast}
                                \displaystyle
                                \prod_{j=1}^{l}
                                    A_{\pi_{2}(j)}
                            }_{= A(\pi_{1})^{\ast}A(\pi_{2})}\\
                    &= &\displaystyle
                        \sum_{k,l\in\naturalsZero}
                        \frac{t^{k+l}}{k!l!}
                        \displaystyle
                        \sum_{(\pi_{1},\pi_{2})\in K^{k}\times K^{l}}
                        \underbrace{
                            \displaystyle\sum_{\substack{
                                C\subseteq K:\\
                                C \supseteq \ran(\pi_{1}) \cup \ran(\pi_{2})
                            }}
                                (-1)^{\card{C}}
                        }_{(\ast)}
                        A(\pi_{1})^{\ast}A(\pi_{2}).\\
            \end{longmaths}

        We can simplify ($\ast$) using the binomial expansion.
        Observe for any $P \subseteq K$ that
            $%
                \sum_{C\subseteq K:~C \supseteq P}
                    (-1)^{\card{C}}
                =
                    \sum_{C^{\prime} \subseteq K \setminus P}
                        (-1)^{\card{C^{\prime} \cup P}}
                =
                    (-1)^{\card{P}}
                    \sum_{C^{\prime}\subseteq K \setminus P}
                        (-1)^{\card{C^{\prime}}}
            $,
        which equals
            ${(-1)^{\card{P}}\cdot (1 + (-1))^{\card{K \setminus P}}} = 0$
            if $P \neq K$
        and otherwise
            $(-1)^{\card{K}}$
            if $P=K$.
        Thus the sum ($\ast$) vanishes except when
            $\ran(\pi_{1}) \cup \ran(\pi_{2}) = K$,
        in which case it equals $(-1)^{\card{K}}$.
        Applying this to the above computation thus yields:

            \begin{longmaths}[mc]{RCL}
                B_{T,K}(t)
                    &= &(-1)^{\card{K}}
                        \displaystyle\sum_{k,l\in\naturalsZero}
                        \displaystyle\sum_{\substack{
                            (\pi_{1},\pi_{2})\in K^{k}\times K^{l}:\hraum\\
                            \ran(\pi_{1})\cup \ran(\pi_{2})=K\hraum
                        }}
                            \frac{t^{k+l}}{k!l!}
                            A(\pi_{1})^{\ast}A(\pi_{2})\\
                    &= &(-1)^{\card{K}}
                        \displaystyle\sum_{\substack{
                            k,l\in\naturalsZero:\hraum\\
                            k+l \geq \card{K}\hraum
                        }}
                        \displaystyle\sum_{\substack{
                            (\pi_{1},\pi_{2})\in K^{k}\times K^{l}:\hraum\\
                            \ran(\pi_{1})\cup \ran(\pi_{2})=K\hraum
                        }}
                            \frac{t^{k+l}}{k!l!}
                            A(\pi_{1})^{\ast}A(\pi_{2})\\
                    &= &\begin{array}[t]{0l}
                        t^{\card{K}}
                        \underbrace{
                            \displaystyle\sum_{\substack{
                                k,l\in\naturalsZero:\hraum\\
                                k+l = \card{K}\hraum
                            }}
                                \frac{1}{k!l!}
                            \displaystyle\sum_{\substack{
                                (\pi_{1},\pi_{2})\in K^{k}\times K^{l}:\hraum\\
                                \ran(\pi_{1})\cup \ran(\pi_{2})=K\hraum
                            }}
                                A^{-}(\pi_{1})^{\ast}A^{-}(\pi_{2})
                        }_{\equalscolon B_{T,K}^{0}}\\
                        + (-1)^{\card{K}}t^{\card{K}+1}
                        \underbrace{
                            \displaystyle\sum_{n=\card{K}+1}^{\infty}
                            \displaystyle\sum_{\substack{
                                k,l\in\naturalsZero:\hraum\\
                                k+l = n\hraum
                            }}
                            \displaystyle\sum_{\substack{
                                (\pi_{1},\pi_{2})\in K^{k}\times K^{l}:\hraum\\
                                \ran(\pi_{1})\cup \ran(\pi_{2})=K\hraum
                            }}
                                \frac{t^{n-\card{K}-1}}{k!l!}
                                A(\pi_{1})^{\ast}A(\pi_{2})
                        }_{\equalscolon \Delta_{T,K}}.\\
                    \end{array}\\
            \end{longmaths}

        To complete the proof, it remains to simplify the first sum in the last expression,
        and to show that $\Delta_{T,K}$ is norm-bounded on compact subsets of $\realsNonNeg$.

        Observe that for $k,l\in\naturalsZero$ with $k+l=\card{K}$
        a pair of sequences
            $(\pi_{1},\pi_{2})\in K^{k}\times K^{l}$
        satisfy
            $\ran(\pi_{1})\cup \ran(\pi_{2})=K$
        if and only if $\pi_{1},\pi_{2}$
        are injective functions mapping to the respective parts of a partition
            $(C_{1},C_{2})$ of $K$
        with $\card{C_{1}}=k$ and $\card{C_{2}}=l$.
        In this case, by commutativity of the generators, one has that
            $A^{-}(\pi_{1})=A^{-}(C_{1})$ and $A^{-}(\pi_{2})=A^{-}(C_{2})$.
        Conversely, for any partition $(C_{1},C_{2})$ of $K$
        setting $k=\card{C_{1}}$ and $l=\card{C_{2}}$
        there exist
            $k!$ sequences $\pi_{1} \in K^{k}$ with $\ran(\pi_{1})=C_{1}$
        and
            $l!$ sequences $\pi_{2} \in K^{l}$ with $\ran(\pi_{2})=C_{2}$.
        Hence $B_{T,K}^{0}$ simplifies to

            \begin{longmaths}[mc]{RCL}
                B_{T,K}^{0}
                    &= &\displaystyle
                        \displaystyle\sum_{\substack{
                            k,l\in\naturalsZero:\hraum\\
                            k+l = \card{K}\hraum
                        }}
                            \frac{1}{k!l!}
                        \displaystyle
                        \sum_{\substack{
                            \isPartition{(C_{1},C_{2})}{K}:\hraum\\
                            \card{C_{1}}=k,~\card{C_{2}}=l\hraum
                        }}
                            k!l! \cdot A^{-}(C_{1})^{\ast}A^{-}(C_{2})\\
                    &= &\displaystyle
                        \sum_{\isPartition{(C_{1},C_{2})}{K}}
                            A^{-}(C_{1})^{\ast}A^{-}(C_{2})\\
                    &= &2^{\card{K}}S_{T,K},\\
            \end{longmaths}

        \continueparagraph
        and thus when inserted into the above computation,
        one sees that
        \eqcref{eq:brehmer-dissipation-operators:sig:article-dilation-problem-raj-dahya}
        holds.

        For the second term, using the boundedness of the generators,
        one may fix
            $M \colonequals \max_{i \in \{1,2,\ldots,d\}}\norm{A_{i}} \in \realsNonNeg$
        and obtain the following norm bound on $\Delta_{T,K}(t)$:

            \begin{longmaths}[mc]{RCL}
                \norm{\Delta_{T,K}(t)}
                    &\leq
                    &\displaystyle\sum_{n=\card{K}+1}^{\infty}
                    \frac{t^{n-\card{K}-1}}{n!}
                    \displaystyle\sum_{k=0}^{n}
                    \displaystyle\sum_{(\pi,\pi^{\prime})\in K^{k}\times K^{n-k}}
                        \frac{n!}{k!(n-k)!}M^{n}\\
                    &=
                    &\displaystyle\sum_{n=\card{K}+1}^{\infty}
                    \frac{t^{n-\card{K}-1}}{n!}
                    \displaystyle\sum_{k=0}^{n}
                        \binom{n}{k}
                        \card{K}^{k}\card{K}^{n-k}M^{n}\\
                    &=
                    &\displaystyle\sum_{n=\card{K}+1}^{\infty}
                        \frac{t^{n-\card{K}-1}}{n!}2^{n}\card{K}^{n}M^{n},\\
            \end{longmaths}

        \continueparagraph
        whereby the expression on the right constitutes
        a power series with infinite converge radius.
        It follows that $\norm{\Delta_{T,K}(\cdot)}$ is bounded on compact subsets of $\realsNonNeg$.
    \end{proof}

\begin{lemm}
\makelabel{lemm:completely-super-dissipative-implies-brehmer:sig:article-dilation-problem-raj-dahya}
    Let $T$ be a $d$-parameter $\Cnought$-semigroup over $\HilbertRaum$
    with bounded generators $A_{1},A_{2},\ldots,A_{d}$.
    If the generators of $T$ are completely super dissipative,
    then $T$ has a regular unitary dilation.
\end{lemm}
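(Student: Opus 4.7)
The plan is to reduce to Ptak's characterisation in \Cref{thm:brehmer-iff-dilation:sig:article-dilation-problem-raj-dahya} by verifying the Brehmer positivity criterion for sufficiently small $t > 0$. Concretely, I would show that complete super dissipativity makes the leading term of the asymptotic expansion from \Cref{prop:asymptotic-behaviour-brehmer-ops:sig:article-dilation-problem-raj-dahya} strictly dominate the remainder in operator order on a small interval $(0,t_{0}]$.

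First I would handle the trivial case $K = \emptyset$, where $B_{T,\emptyset}(t) = \onematrix \geq \zeromatrix$ holds identically by \eqcref{eq:recursion:brehmer:sig:article-dilation-problem-raj-dahya}. For any nonempty $K \subseteq \{1,2,\ldots,d\}$, I would invoke the expansion
\begin{maths}[mc]{c}
    B_{T,K}(t) = 2^{\card{K}}t^{\card{K}}S_{T,K} + (-1)^{\card{K}}t^{\card{K}+1}\Delta_{T,K}(t)
\end{maths}
from \Cref{prop:asymptotic-behaviour-brehmer-ops:sig:article-dilation-problem-raj-dahya}. Since $S_{T,K}$ is self-adjoint with $\min \opSpectrum{S_{T,K}} \geq \beta_{T} > 0$, one has $S_{T,K} \geq \beta_{T}\onematrix$ in operator order, whence
\begin{maths}[mc]{c}
    B_{T,K}(t) \geq \big(2^{\card{K}}t^{\card{K}}\beta_{T} - t^{\card{K}+1}\norm{\Delta_{T,K}(t)}\big)\onematrix
    = t^{\card{K}}\big(2^{\card{K}}\beta_{T} - t\,\norm{\Delta_{T,K}(t)}\big)\onematrix.
\end{maths}

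Because $\norm{\Delta_{T,K}(\cdot)}$ is bounded on $[0,1]$ (this is precisely the content of the last clause of \Cref{prop:asymptotic-behaviour-brehmer-ops:sig:article-dilation-problem-raj-dahya}), say by $M_{K} < \infty$, picking $t_{K} \colonequals \min\{1,\:2^{\card{K}}\beta_{T}/M_{K}\}$ ensures the bracketed factor is nonnegative for all $t \in (0,t_{K}]$. Setting $t_{0} \colonequals \min_{\emptyset \neq K \subseteq \{1,2,\ldots,d\}} t_{K} > 0$ and combining with the trivial case $K = \emptyset$, I obtain $B_{T,K}(t) \geq \zeromatrix$ for every $K \subseteq \{1,2,\ldots,d\}$ and every $t \in (0,t_{0}]$. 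This is exactly the Brehmer positivity criterion of \Cref{defn:brehmer-operators:sig:article-dilation-problem-raj-dahya}, so \Cref{thm:brehmer-iff-dilation:sig:article-dilation-problem-raj-dahya} yields the desired regular unitary dilation.

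There is no real obstacle once the asymptotic expansion is in hand; the only point requiring a touch of care is the operator-theoretic step that $\min \opSpectrum{S_{T,K}} \geq \beta_{T}$ upgrades to the operator inequality $S_{T,K} \geq \beta_{T}\onematrix$, which is immediate from self-adjointness of the dissipation operators and the functional calculus.
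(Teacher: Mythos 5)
Your proposal is correct and follows essentially the same route as the paper: both reduce to Ptak's criterion via \Cref{thm:brehmer-iff-dilation:sig:article-dilation-problem-raj-dahya} and use the expansion of \Cref{prop:asymptotic-behaviour-brehmer-ops:sig:article-dilation-problem-raj-dahya} together with $S_{T,K}\geq\beta_{T}\onematrix$ to make the leading term dominate the remainder for small $t$. The only differences are cosmetic (operator inequalities in place of quadratic forms, and per-$K$ constants $M_{K},t_{K}$ instead of a single uniform bound $C$ and threshold $t^{\ast}$).
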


    \begin{proof}
        By \Cref{thm:brehmer-iff-dilation:sig:article-dilation-problem-raj-dahya} it is necessary and sufficient to show that
        $T$ satisfies the Brehmer positivity criterion.
        By \Cref{prop:asymptotic-behaviour-brehmer-ops:sig:article-dilation-problem-raj-dahya}
        one has the connection between the Brehmer and dissipation operators:
            $%
                B_{T,K}(t)
                = 2^{\card{K}}t^{\card{K}}S_{T,K}
                + (-1)^{\card{K}}t^{\card{K}+1}\Delta_{T,K}(t)
            $
        for each $K\subseteq\{1,2,\ldots,d\}$ and $t\in\realsNonNeg$,
        where $\Delta_{T,K}$ is norm-bounded on compact subsets of $\realsNonNeg$.
        Set
            $%
                C \colonequals \sup\{
                    1,\,\norm{\Delta_{T,K}(t)}
                    \mid
                    t\in [0,\:1],
                    ~K\subseteq\{1,2,\ldots,d\}
                \}
            $,
        which is finite and positive.

        Since the generators of $T$ are completely super dissipative,
        then $\beta_{T} > 0$.
        Set $t^{\ast} \colonequals \min\{1,\,\frac{\beta_{T}}{2C}\} \in \realsPos$.
        Applying the spectral theory of self-adjoint operators yields
            $
                \BRAKET{S_{T,K}\xi}{\xi}
                \geq
                    \min(\opSpectrum{S_{T,K}})
                    \norm{\xi}^{2}
                \geq
                    \beta_{T}\norm{\xi}^{2}
            $
        for all $\xi\in\HilbertRaum$.
        For $t\in (0,\:t^{\ast})$ one thus obtains

            \begin{longmaths}[mc]{RCL}
                \BRAKET{B_{T,K}(t)\xi}{\xi}
                    &=
                        &t^{\card{K}} \BRAKET{2^{\card{K}}S_{T,K}\xi}{\xi}
                        + (-1)^{\card{K}}t^{\card{K}+1} \BRAKET{\Delta_{T,K}(t)\xi}{\xi}\\
                    &\geq
                        &2^{0}t^{\card{K}}\beta_{T}\norm{\xi}^{2}
                        - t^{\card{K}+1} \norm{\Delta_{T,K}(t)}\norm{\xi}^{2}\\
                    &\geq &t^{\card{K}} (\beta_{T} - C t) \norm{\xi}^{2}
                        \quad\text{(since $t \in (0,\:t^{\ast}) \subseteq [0,\:1]$)}\\
                    &\geq &t^{\card{K}} (\beta_{T} - \frac{\beta_{T}}{2}) \norm{\xi}^{2}
                        \quad\text{(since $t < t^{\ast} \leq \frac{\beta_{T}}{2C}$)}\\
                    &= &t^{\card{K}}\frac{\beta_{T}}{2}\norm{\xi}^{2}\\
            \end{longmaths}

        \continueparagraph
        for all $\xi\in\HilbertRaum$ and all $K\subseteq\{1,2,\ldots,d\}$.
        So $B_{T,K}(t) \geq \zeromatrix$ for all $t\in(0,\:t^{\ast})$ and all $K\subseteq\{1,2,\ldots,d\}$.
        Hence $T$ satisfies the Brehmer positivity criterion.
    \end{proof}

Complete super dissipativity shall prove to be a strictly stronger condition
than the existence of regular unitary dilations
(see \S{}\ref{sec:examples:doubly-comm}).
The correct condition turns out to be complete dissipativity.
To prove this, we shall make use of approximations via
semigroups with completely super dissipative generators.
And to enable this, we use the following lemma
to establish sufficient conditions for shifts to guarantee this property.

\begin{lemm}
\makelabel{lemm:self-similarities-imply-perturbations-yield-positive-beta:sig:article-dilation-problem-raj-dahya}
    Let $T$ be a $d$-parameter $\Cnought$-semigroup over $\HilbertRaum$
    with bounded generators $A_{1},A_{2},\ldots,A_{d}$.
    If $\beta_{T} < 0$,
    then the $\boldsymbol{\omega}$-shift $E_{\boldsymbol{\omega}} \cdot T$ of $T$
    is completely super dissipative
    for all $\boldsymbol{\omega} \in \realsPos^{d}$,
    provided

        \begin{maths}[bc]{rcl}
        \eqtag[eq:condition-shift-complete-super-dissipative:sig:article-dilation-problem-raj-dahya]
            \displaystyle
            \prod_{i=1}^{d}
                (1 + \omega_{i}^{-1})
            < 1 + \frac{1}{\abs{\beta_{T}}}
        \end{maths}

    \continueparagraph
    holds. And if $\beta_{T} \geq 0$,
    then $E_{\boldsymbol{\omega}} \cdot T$
    is completely super dissipative
    for all $\boldsymbol{\omega} \in \realsPos^{d}$.
\end{lemm}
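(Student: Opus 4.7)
The plan is to apply the self-similarity formula from \Cref{prop:algebraic-identity-scaled-shifts:sig:article-dilation-problem-raj-dahya},
\[
    S_{E_{\boldsymbol{\omega}} \cdot T, K}
    = \sum_{K^{\prime} \subseteq K} \omega_{K \without K^{\prime}} S_{T,K^{\prime}},
\]
and to isolate the $K^{\prime} = \emptyset$ summand, which contributes the strictly positive multiple $\omega_{K}\cdot\onematrix$ of the identity (with the convention $\omega_{\emptyset}=1$). The idea is to use this term as a buffer against the contributions from the remaining indices $K^{\prime}$, each of which can be controlled using the spectral lower bound $S_{T,K^{\prime}} \geq \beta_{T}\onematrix$.

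In the case $\beta_{T} \geq 0$, every $S_{T,K^{\prime}}$ is positive, so discarding all but the $K^{\prime}=\emptyset$ summand already gives $S_{E_{\boldsymbol{\omega}} \cdot T, K} \geq \omega_{K}\onematrix$ for each $K \subseteq \{1,2,\ldots,d\}$. Taking the minimum over the finite collection of subsets $K$ yields a uniform strictly positive lower bound on the spectra, hence $\beta_{E_{\boldsymbol{\omega}} \cdot T} > 0$. In the case $\beta_{T} < 0$, I would instead bound each $S_{T,K^{\prime}}$ with $K^{\prime}\neq\emptyset$ from below by $-\abs{\beta_{T}}\onematrix$, arriving at
\[
    S_{E_{\boldsymbol{\omega}} \cdot T, K}
    \geq
    \Bigl(
        \omega_{K}
        - \abs{\beta_{T}}
        \sum_{\emptyset \neq K^{\prime} \subseteq K}
            \omega_{K \without K^{\prime}}
    \Bigr)\onematrix.
\]
Re-indexing via $K^{\prime\prime} = K \without K^{\prime}$ and invoking the binomial identity $\sum_{K^{\prime\prime} \subseteq K}\omega_{K^{\prime\prime}} = \prod_{i\in K}(1+\omega_{i})$ recorded in \S\ref{sec:definitions}, the coefficient rearranges to
\[
    \omega_{K}\Bigl(
        1 + \abs{\beta_{T}}
        - \abs{\beta_{T}}\prod_{i\in K}(1 + \omega_{i}^{-1})
    \Bigr),
\]
which is strictly positive precisely when $\prod_{i\in K}(1 + \omega_{i}^{-1}) < 1 + \tfrac{1}{\abs{\beta_{T}}}$. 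Since each factor $1+\omega_{i}^{-1}$ exceeds $1$, this product is maximised over $K \subseteq \{1,2,\ldots,d\}$ at $K=\{1,2,\ldots,d\}$, so hypothesis \eqcref{eq:condition-shift-complete-super-dissipative:sig:article-dilation-problem-raj-dahya} delivers the bound uniformly in $K$, whence $\beta_{E_{\boldsymbol{\omega}} \cdot T} > 0$ by a finite minimum as before.

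The only real obstacle is algebraic bookkeeping: recognising the coefficient as the product $\prod_{i\in K}(1 + \omega_{i}^{-1})$ through the binomial identity, and noticing that the maximum over $K$ is attained at the full index set $K=\{1,2,\ldots,d\}$. Both of these are routine once the self-similarity identity \eqcref{eq:self-similarity-dissipation-operators:sig:article-dilation-problem-raj-dahya} is in hand, so the proof reduces essentially to the manipulation sketched above.
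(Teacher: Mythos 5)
Your proposal is correct and follows essentially the same route as the paper: both isolate the $K^{\prime}=\emptyset$ term $\omega_{K}\onematrix$ in the self-similarity identity, bound the remaining summands via $S_{T,K^{\prime}}\geq\beta_{T}\onematrix$, and collapse the resulting coefficient to $\omega_{K}\bigl(1-\abs{\beta_{T}}(\prod_{i\in K}(1+\omega_{i}^{-1})-1)\bigr)$ using the binomial identity, with the maximum over $K$ attained at the full index set. Your coefficient is algebraically identical to the paper's, so no gap remains.
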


    \begin{proof}
        Let $\omega\in\realsPos^{d}$ be arbitrary.
        Note that
            $\omega_{K} = \prod_{i\in K}\omega_{i} > 0$
        and
            $S_{T,K} \geq \beta_{T}\onematrix \geq -\beta_{T}^{-}\onematrix$
        for all $K \subseteq \{1,2,\ldots,d\}$.
        Applying \eqcref{eq:self-similarity-dissipation-operators:sig:article-dilation-problem-raj-dahya}
        from \Cref{prop:algebraic-identity-scaled-shifts:sig:article-dilation-problem-raj-dahya}
        to $T$ yields

            \begin{longmaths}[mc]{RCL}
                S_{E_{\boldsymbol{\omega}} \cdot T, K}
                    &=
                        &\displaystyle
                        \sum_{K^{\prime}\subseteq K}
                            \omega_{K \without K^{\prime}}^{-1}
                            S_{T,K^{\prime}}\\
                    &=
                        &\omega_{K}\onematrix
                        + \omega_{K}
                        \displaystyle
                        \sum_{\substack{
                            K^{\prime}\subseteq K:\hraum\\
                            K^{\prime}\neq\emptyset\hraum
                        }}
                            \omega_{K^{\prime}}^{-1}
                            S_{T,K^{\prime}}\\
                    &\geq
                        &\omega_{K}
                        \onematrix
                        - \omega_{K}
                        \displaystyle
                        \sum_{\substack{
                            K^{\prime}\subseteq K:\hraum\\
                            K^{\prime}\neq\emptyset\hraum
                        }}
                            \beta_{T}^{-}
                            \omega_{K^{\prime}}^{-1}
                            \cdot
                            \onematrix\\
                    &=
                        &\omega_{K}
                        \cdot
                        \Big(
                            1
                            - \beta_{T}^{-}
                            \cdot
                            \Big(
                                \displaystyle
                                \sum_{K^{\prime}\subseteq K}
                                \omega_{K^{\prime}}^{-1}
                                - 1
                            \Big)
                        \Big)
                        \onematrix\\
                    &=
                        &\omega_{K}
                        \cdot
                        \Big(
                            1
                            - \beta_{T}^{-}
                            \cdot
                            \Big(
                                \displaystyle
                                \prod_{i \in K}
                                (1 + \omega_{i}^{-1})
                                - 1
                            \Big)
                        \Big)
                        \onematrix\\
                    &\geq
                        &\omega_{K}
                        \cdot
                        \underbrace{
                            \Big(
                                1
                                - \beta_{T}^{-}
                                \cdot
                                \Big(
                                    \displaystyle
                                    \prod_{i=1}^{d}
                                    (1 + \omega_{i}^{-1})
                                    - 1
                                \Big)
                            \Big)
                        }_{\equalscolon \gamma}
                        \onematrix\\
            \end{longmaths}

        \continueparagraph
        for each $K\subseteq\{1,2,\ldots,d\}$.

        Now if $\beta_{T} \geq 0$, then $\beta_{T}^{-} = 0$ and hence $\gamma = 1 - 0 > 0$.
        Otherwise $\beta_{T} < 0$ and one has $\beta_{T}^{-} = \abs{\beta_{T}} > 0$.
        In this case, since \eqcref{eq:condition-shift-complete-super-dissipative:sig:article-dilation-problem-raj-dahya} is assumed,
        one has
            $\gamma > 1 - 1 = 0$.
        Since
            $%
                S_{E_{\boldsymbol{\omega}} \cdot T, K}
                    \geq
                        \omega_{K}\gamma\onematrix
            $
        for $K \subseteq \{1,2,\ldots,d\}$,
        one thus obtains
            $
                \beta_{E_{\boldsymbol{\omega}} \cdot T}
                    \geq
                        \min_{K\subseteq\{1,2,\ldots,d\}}\omega_{K}\gamma
                    > 0
            $,
        whence $E_{\boldsymbol{\omega}} \cdot T$ is completely super dissipative.
    \end{proof}



\subsection[Classification of regular unitary dilatability]{Classification of regular unitary dilatability}
\label{sec:applications:classification}

\firstparagraph
We can now prove \Cref{thm:classification:sig:article-dilation-problem-raj-dahya}.

    \def\beweislabel{thm:classification:sig:article-dilation-problem-raj-dahya}
    \begin{proof}[of \Cref{\beweislabel}]
        As per usual, we let
            $T_{1},T_{2},\ldots,T_{d}$
        denote the marginals of $T$
        and
            $A_{1},A_{2},\ldots,A_{d}$
        their respective generators.

        \paragraph{\hinRichtung{1}{2}:}
        By \Cref{prop:asymptotic-behaviour-brehmer-ops:sig:article-dilation-problem-raj-dahya}
        one has
            $%
                B_{T,K}(t)
                = 2^{\card{K}}t^{\card{K}}S_{T,K}
                + (-1)^{\card{K}}t^{\card{K}+1}\Delta_{T,K}(t)
            $
        where $\Delta_{T,K}$ is norm-bounded on compact subsets of $\realsNonNeg$
        for each $K\subseteq\{1,2,\ldots,d\}$ and $t\in\realsNonNeg$.
        Thus
            $%
                C \colonequals \sup\{
                    1,\,\norm{\Delta_{T,K}(t)}
                    \mid
                    t\in [0,\:1],
                    ~K\subseteq\{1,2,\ldots,d\}
                \}
            $
        is a finite, positive real.

        We shall prove the implication by contraposition.
        So, suppose that $\beta_{T} < 0$.
        By definition there must exist some
            $\tilde{K} \subseteq \{1,2,\ldots,d\}$
        such that
            $\min(\opSpectrum{S_{T,\tilde{K}}}) = \beta_{T} < 0$.
        By the correspondence between the numerical range of self-adjoint operators
        and their spectra (\cf \cite[Theorem~1.2.1--4]{Gustafson1997numericalRange}),
        it holds that
            $%
                \beta_{T}
                = \inf\{
                    \BRAKET{S_{T,\tilde{K}}\xi}{\xi}
                    \mid
                    \xi \in\HilbertRaum,~\norm{\xi}=1
                \}
            $.
        Set $t^{\ast} \colonequals \min\{1,\,-\frac{2^{\card{\tilde{K}}}\beta_{T}}{2C}\} \in \realsPos$.
        For $t\in (0,\:t^{\ast})$ one computes

            \begin{longmaths}[mc]{RCL}
                \BRAKET{B_{T,\tilde{K}}(t)\xi}{\xi}
                    &=
                        &t^{\card{\tilde{K}}} \BRAKET{2^{\card{\tilde{K}}}S_{T,\tilde{K}}\xi}{\xi}
                        + (-1)^{\card{\tilde{K}}}t^{\card{\tilde{K}}+1} \BRAKET{\Delta_{T,\tilde{K}}(t)\xi}{\xi}\\
                    &\leq
                        &t^{\card{\tilde{K}}}2^{\card{\tilde{K}}}
                            \BRAKET{S_{T,\tilde{K}}\xi}{\xi}
                        + t^{\card{\tilde{K}}+1}
                            \norm{\Delta_{T,\tilde{K}}(t)}
                            \norm{\xi}^{2}\\
                    &\leq &t^{\card{\tilde{K}}}\Big(
                            2^{\card{\tilde{K}}}
                            \BRAKET{S_{T,\tilde{K}}\xi}{\xi}
                            + C t \cdot 1^{2}
                        \Big)
                        \quad\text{(since $t \in (0,\:t^{\ast}) \subseteq [0,\:1]$)}\\
                    &\leq &t^{\card{\tilde{K}}}\Big(
                            2^{\card{\tilde{K}}}
                            \BRAKET{S_{T,\tilde{K}}\xi}{\xi}
                            - \frac{2^{\card{\tilde{K}}}\beta_{T}}{2}
                        \Big)
                        \quad\text{(since $t < t^{\ast} \leq -\frac{2^{\tilde{K}}\beta_{T}}{2C}$)}\\
                    &= &(2t)^{\card{\tilde{K}}}(\BRAKET{S_{T,\tilde{K}}\xi}{\xi} - \frac{\beta_{T}}{2})\\
            \end{longmaths}

        \continueparagraph
        for all $\xi\in\HilbertRaum$ with $\norm{\xi}=1$.
        Taking infima over such vectors $\xi$ thus yields

            \begin{maths}[mc]{c}
                \inf_{\xi}
                    \BRAKET{B_{T,\tilde{K}}(t)\xi}{\xi}
                \leq
                    (2t)^{\card{\tilde{K}}}(\inf_{\xi}\BRAKET{S_{T,\tilde{K}}\xi}{\xi} - \frac{\beta_{T}}{2})
                = (2t)^{\card{\tilde{K}}}(\beta_{T} - \frac{\beta_{T}}{2})
                < 0,\\
            \end{maths}

        \continueparagraph
        whence there must exist some $\xi\in\HilbertRaum$ with $\norm{\xi}=1$
        such that $\BRAKET{B_{T,\tilde{K}}(t)\xi}{\xi} < 0$.
        In particular, $B_{T,\tilde{K}}(t)$ is not a positive operator.
        Since this is the case for all $t\in(0,t^{\ast})$,
        the Brehmer positivity criterion fails
        and by \Cref{thm:brehmer-iff-dilation:sig:article-dilation-problem-raj-dahya}
        $T$ does not have a regular unitary dilation.

        \paragraph{\hinRichtung{2}{3}:}
        Directly order $\realsPos^{d}$ via
            $\boldsymbol{\omega}^{\prime}\succeq\boldsymbol{\omega}$
            $:\Leftrightarrow$
            $\forall{i\in\{1,2,\ldots,d\}:~}\omega^{\prime}_{i}\leq\omega_{i}$
        for each $\boldsymbol{\omega},\boldsymbol{\omega}^{\prime}\in\realsPos^{d}$,
        and consider the net
            $(T^{(\boldsymbol{\omega})} \colonequals E_{\boldsymbol{\omega}} \cdot T)_{\boldsymbol{\omega}\in\realsPos^{d}}$.
        Since $T$ has completely dissipative generators,
        by \Cref{lemm:self-similarities-imply-perturbations-yield-positive-beta:sig:article-dilation-problem-raj-dahya},
        this net consists of $d$-parameter $\Cnought$-semigroups
        with completely super dissipative generators,
        and further by \Cref{lemm:completely-super-dissipative-implies-brehmer:sig:article-dilation-problem-raj-dahya},
        these semigroups have regular unitary dilations.

        We now demonstrate uniform norm-convergence on compact subsets of $\realsNonNeg^{d}$.
        To this end fix an arbitrary compact subset $L \subseteq \realsNonNeg^{d}$.
        By compactness, $L\subseteq\prod_{i=1}^{d}[0,\:C]$ for some $C\in\realsNonNeg$.
        Since $\topSOT$-continuous $1$-parameter $\Cnought$-semigroups
        are norm bounded on compact subsets,
        there exists some $M\in\realsNonNeg$ such that
            $\norm{T_{i}(t)} \leq M$ for all $t \in [0,\:C]$
        and all $i\in \{1,2,\ldots,d\}$.
        Putting this together yields

            \begin{maths}[mc]{c}
                \displaystyle
                \sup_{\mathbf{t} \in L}
                \norm{
                    T^{(\boldsymbol{\omega})}(\mathbf{t})
                    - T(\mathbf{t})
                }
                    =
                        \displaystyle
                        \sup_{\mathbf{t} \in L}
                        \abs{e^{-\BRAKET{\mathbf{t}}{\boldsymbol{\omega}}} - 1}
                        \norm{T(\mathbf{t})}
                    \leq
                        M^{d} \cdot (1 - e^{-C\sum_{i=1}^{d}\omega_{i}})\\
            \end{maths}

        \continueparagraph
        for each $\boldsymbol{\omega}\in\realsPos^{d}$.
        The right- and thus the left-hand side of the above clearly converge to $0$
        as ${\boldsymbol{\omega} \longrightarrow \zerovector}$.
        So ${T^{(\boldsymbol{\omega})} \underset{\boldsymbol{\omega}}{\longrightarrow} T}$
        uniformly in norm on compact subsets of $\realsNonNeg^{d}$.
        Hence \punktcref{3} holds.

        \paragraph{\hinRichtung{3}{3dash}:} This trivially holds.

        \paragraph{\hinRichtung{3dash}{1}:}
        By \Cref{thm:brehmer-iff-dilation:sig:article-dilation-problem-raj-dahya},
        it is necessary and sufficient to show that
            $B_{T, K}(t) \geq \zeromatrix$
            for all $K\subseteq\{1,2,\ldots,d\}$
            and all $t\in\realsPos$.
        So fix arbitrary
            $K\subseteq\{1,2,\ldots,d\}$
            and
            $t\in\realsPos$.
        Since each of the $T^{(\alpha)}$ are regularly unitarily dilatable,
        by \Cref{thm:brehmer-iff-dilation:sig:article-dilation-problem-raj-dahya},
        one has that
            $B_{T^{(\alpha)}, K}(t) \geq \zeromatrix$
        for each $\alpha\in\cal{I}$.
        Now, for $\xi\in\HilbertRaum$ one has

            \begin{longmaths}[mc]{RCL}
                \abs{\BRAKET{(
                    B_{T^{(\alpha)}, K}(t)
                    - B_{T, K}(t)
                )\xi}{\xi}}
                    &=
                        &\absLong{
                        \displaystyle
                        \sum_{C \subseteq K}
                            (-1)^{|C|}
                            (
                                \norm{T^{(\alpha)}(t\mathbf{e}_{C})\xi}^{2}
                                - \norm{T(t\mathbf{e}_{C})\xi}^{2}
                            )
                        }\\
                    &\leq
                        &\displaystyle
                        \sum_{C \subseteq K}
                            \abs{
                                \norm{T^{(\alpha)}(t\mathbf{e}_{C})\xi}^{2}
                                - \norm{T(t\mathbf{e}_{C})\xi}^{2}
                            },\\
            \end{longmaths}

        \continueparagraph
        which clearly converges to $0$, since by assumption
            ${(T^{(\alpha)})_{\alpha} \longrightarrow T}$
        uniformly in the $\topSOT$-topology
        on compact subsets of $\realsNonNeg^{d}$
        (\exempli $\{t\mathbf{e}_{C}\}$).
        Since
            $B_{T^{(\alpha)}, K}(t) \geq \zeromatrix$
        for all $\alpha\in\cal{I}$,
        the above computation implies
            $\BRAKET{B_{T, K}(t)\xi}{\xi} \geq 0$
        for all $\xi\in\HilbertRaum$,
        \idest $B_{T,K}(t) \geq \zeromatrix$.
    \end{proof}

Having established \Cref{thm:classification:sig:article-dilation-problem-raj-dahya},
we may now prove \Cref{cor:dilatable-up-to-exponential-modification:sig:article-dilation-problem-raj-dahya}.

    \def\beweislabel{cor:dilatable-up-to-exponential-modification:sig:article-dilation-problem-raj-dahya}
    \begin{proof}[of \Cref{\beweislabel}]
        By \Cref{lemm:self-similarities-imply-perturbations-yield-positive-beta:sig:article-dilation-problem-raj-dahya}
        we can find $\boldsymbol{\omega}\in\realsPos^{d}$ such that
        the $\boldsymbol{\omega}$-shift $E_{\boldsymbol{\omega}} \cdot T$ of $T$
        has completely (super!) dissipative generators.
        By \Cref{thm:classification:sig:article-dilation-problem-raj-dahya}
        it follows that $\tilde{T}=E_{\boldsymbol{\omega}} \cdot T$
        has a regular unitary dilation.
    \end{proof}

\begin{rem}
    Observe that, if we let
        (\ref*{it:3dash:thm:classification:sig:article-dilation-problem-raj-dahya}')
    denote the same statement as
        \eqcref{it:3dash:thm:classification:sig:article-dilation-problem-raj-dahya}
        in \Cref{thm:classification:sig:article-dilation-problem-raj-dahya},
    except with \emph{%
        uniform $\topSOT$-convergence on compact subsets of $\realsNonNeg^{d}$%
    } replaced by the weaker condition of \emph{%
        pointwise $\topSOT$-convergence%
    },
    then the above proof makes clear that
        \eqcref{it:3dash:thm:classification:sig:article-dilation-problem-raj-dahya}
        ~$\Rightarrow$~%
        (\ref*{it:3dash:thm:classification:sig:article-dilation-problem-raj-dahya}')%
        ~$\Rightarrow$~%
        \eqcref{it:1:thm:classification:sig:article-dilation-problem-raj-dahya}.
    So the equivalences in the theorem also hold with
        \eqcref{it:3dash:thm:classification:sig:article-dilation-problem-raj-dahya}
    replaced by the weaker statement
        (\ref*{it:3dash:thm:classification:sig:article-dilation-problem-raj-dahya}').
    Observe further that the statement and proof of
        \eqcref{it:3dash:thm:classification:sig:article-dilation-problem-raj-dahya}%
        ~$\Rightarrow$~%
        \eqcref{it:1:thm:classification:sig:article-dilation-problem-raj-dahya}
    do not require $T$ to have bounded generators.
    It would be interesting to know whether the concept of complete dissipativity
    can be well-defined without the assumption of bounded generators,
    and whether
        \eqcref{it:1:thm:classification:sig:article-dilation-problem-raj-dahya}%
        ~$\Rightarrow$~%
        \eqcref{it:2:thm:classification:sig:article-dilation-problem-raj-dahya}
        and
        \eqcref{it:2:thm:classification:sig:article-dilation-problem-raj-dahya}%
        ~$\Rightarrow$~%
        \eqcref{it:3dash:thm:classification:sig:article-dilation-problem-raj-dahya}
    in the theorem remain true.
\end{rem}



\subsection[Infinitely many commuting $\Cnought$-semigroups]{Infinitely many commuting $\Cnought$-semigroups}
\label{sec:results:infinite}

\firstparagraph
We now extend the notion of complete dissipativity and the classification result
to arbitrarily many commuting $\Cnought$-semigroups.

Let $I$ be an arbitrary non-empty set
and $(T_{i})_{i \in I}$ be commuting $\Cnought$-semigroups over a Hilbert space $\HilbertRaum$,
with bounded generators $(A_{i})_{i \in I} \subseteq \BoundedOps{\HilbertRaum}$.
For finite subsets $K \subseteq I$ and $t\in\realsNonNeg$
we define

    \begin{maths}[mc]{rcl}
        B_{(T_{i})_{i \in I}, K}(t)
            &\colonequals
                &\displaystyle
                \sum_{C \subseteq K}
                    (-1)^{\card{C}}
                    \Big(
                        \displaystyle
                        \prod_{i \in C}T_{i}(t)
                    \Big)^{\ast}
                    \Big(
                        \displaystyle
                        \prod_{i \in C}T_{i}(t)
                    \Big),
        ~\text{and}\\
        S_{(T_{i})_{i \in I}, K}
            &\colonequals
                &2^{-\card{K}}
                \displaystyle
                \sum_{\mathclap{\isPartition{(C_{1},C_{2})}{K}}}
                    \Big(
                        \displaystyle
                        \prod_{i \in C_{1}}A^{-}_{i}
                    \Big)^{\ast}
                    \Big(
                        \displaystyle
                        \prod_{i \in C_{2}}A^{-}_{i}
                    \Big),\\
    \end{maths}

\continueparagraph
which clearly coincide with the \highlightTerm{Brehmer} and \highlightTerm{dissipation operators}
in \Cref{defn:brehmer-operators:sig:article-dilation-problem-raj-dahya}
and \Cref{defn:dissipation-operators-and-conditions:sig:article-dilation-problem-raj-dahya}
respectively in the finite case of $I=\{1,2,\ldots,d\}$ for some $d\in\naturalsPos$.
We shall say that the generators of $(T_{i})_{i \in I}$ are \highlightTerm{completely dissipative}
if $S_{(T_{i})_{i \in I}, K} \geq \zeromatrix$ for all finite subsets $K \subseteq I$.
Again, this coincides with \Cref{defn:dissipation-operators-and-conditions:sig:article-dilation-problem-raj-dahya}
in the finite case.

We say that $(T_{i})_{i \in I}$ has a \highlightTerm{simultaneous regular unitary dilation},
if there exist
    commuting unitary $\Cnought$-semigroups $(U_{i})_{i \in I}$ over some Hilbert space $\HilbertRaum^{\prime}$
    and $r\in\BoundedOps{\HilbertRaum}{\HilbertRaum^{\prime}}$ (necessarily isometric),
such that

    \begin{maths}[mc]{c}
        \Big(
            \displaystyle
            \prod_{i \in \supp(\mathbf{t}^{-})}
            T_{i}(t_{i}^{-})
        \Big)^{\ast}
        \Big(
            \displaystyle
            \prod_{i \in \supp(\mathbf{t}^{+})}
            T_{i}(t_{i}^{+})
        \Big)
            =
                r^{\ast}\,(\prod_{i \in K}U_{i}(t_{i}))\,r
    \end{maths}

\continueparagraph
for all $\mathbf{t}\in\reals^{K}$ and finite $K\subseteq\{1,2,\ldots,d\}$,
where we extend the unitary semigroups to representations of $(\reals,+,0)$
in the usual way.
This clearly coincides with the definition of a regular unitary dilation in the finite case
(\cf \Cref{defn:dilation-regular:sig:article-dilation-problem-raj-dahya} and the introduction).

Using these definitions we obtain the following classification as a simple consequence
of the classification result in the finite case:

\begin{cor}[General classification of regular unitary dilatability]
\makelabel{cor:general-classification:sig:article-dilation-problem-raj-dahya}
    Let $I$ be an arbitrary non-empty set
    and $(T_{i})_{i \in I}$ be $\Cnought$-semigroups over a Hilbert space $\HilbertRaum$
    with bounded generators $(A_{i})_{i \in I} \subseteq \BoundedOps{\HilbertRaum}$.
    Then $(T_{i})_{i \in I}$ has a regular unitary dilation
    if and only if the generators of $(T_{i})_{i \in I}$ are completely dissipative.
\end{cor}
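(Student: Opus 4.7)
The plan is to reduce both implications to the finite-parameter classification \Cref{thm:classification:sig:article-dilation-problem-raj-dahya}, using that both complete dissipativity and the existence of a simultaneous regular unitary dilation localise naturally to finite subfamilies. The forward direction will drop out by a simple restriction argument; the backward direction requires assembling the finitely-indexed dilations into a single global one, which I propose to do via an operator-valued positive-definite-function argument on the abelian group $G \colonequals \bigoplus_{i \in I}\reals$ of finitely-supported tuples, following the scheme of \cite[Theorem~I.7.1]{Nagy1970} that already underlies \cite[Theorem~3.2]{Ptak1985}.

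For the forward implication, I would fix an arbitrary finite $K \subseteq I$ and observe that restricting the given simultaneous dilation $((U_{i})_{i \in I}, \HilbertRaum^{\prime}, r)$ to the coordinates in $K$ yields a regular unitary dilation of the $\card{K}$-parameter $\Cnought$-semigroup $T|_{K}(\mathbf{t}) \colonequals \prod_{i \in K} T_{i}(t_{i})$ over $\realsNonNeg^{K}$. \Cref{thm:classification:sig:article-dilation-problem-raj-dahya} then immediately gives $S_{(T_{i})_{i \in I}, K} = S_{T|_{K}, K} \geq \zeromatrix$; since $K$ was arbitrary, $(A_{i})_{i \in I}$ is completely dissipative.

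For the converse, complete dissipativity of $(A_{i})_{i \in I}$ trivially restricts to complete dissipativity of every finite subfamily, which by \Cref{thm:classification:sig:article-dilation-problem-raj-dahya} and \Cref{thm:brehmer-iff-dilation:sig:article-dilation-problem-raj-dahya} is equivalent to the Brehmer positivity $B_{(T_{i})_{i \in I}, K}(t) \geq \zeromatrix$ for every $t \in \realsPos$ and every finite $K \subseteq I$. The plan is then to package these finite Brehmer inequalities as the positive definiteness of the operator-valued function
\[
    \varphi \colon G \to \BoundedOps{\HilbertRaum}, \qquad \varphi(\mathbf{t}) \colonequals \Big(\prod_{i \in \supp(\mathbf{t}^{-})} T_{i}(t_{i}^{-})\Big)^{\ast} \prod_{i \in \supp(\mathbf{t}^{+})} T_{i}(t_{i}^{+}),
\]
on $G$ viewed as an abstract (untopologised) abelian group. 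A Kolmogorov/GNS-type construction on $\bigoplus_{\mathbf{t} \in G}\HilbertRaum$ quotiented by the null-space of $\varphi$ then yields a Hilbert space $\HilbertRaum^{\prime}$, an isometry $r \in \BoundedOps{\HilbertRaum}{\HilbertRaum^{\prime}}$, and a unitary representation $U \colon G \to \BoundedOps{\HilbertRaum^{\prime}}$ with $\varphi(\mathbf{t}) = r^{\ast}U(\mathbf{t})r$. Setting $U_{i}(t) \colonequals U(t\mathbf{e}_{i})$ gives commuting unitary one-parameter groups whose restriction to $\realsNonNeg^{K}$ realises the required dilation identity for every finite $K$.

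The main obstacle I anticipate is upgrading each $U_{i}$ from an algebraic one-parameter group to a genuine $\Cnought$-semigroup, since $G$ has been handled as a discrete group in the construction. I would settle this coordinate-wise: the identity $\norm{U_{i}(t)r\xi - r\xi}^{2} = 2\norm{\xi}^{2} - 2\Re \BRAKET{T_{i}(t)\xi}{\xi}$ yields strong continuity of $U_{i}$ at $t=0$ on the subspace $r\HilbertRaum$, by $\topSOT$-continuity of $T_{i}$; extension to all of $\HilbertRaum^{\prime}$ then follows from the fact that $U(G)r\HilbertRaum$ is total in $\HilbertRaum^{\prime}$, that each $U_{i}(t)$ is an isometry, and that $U_{i}(t)$ commutes with every $U(\mathbf{s})$, via a standard $\eps/3$-argument. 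Commutativity of the $U_{i}$ and the dilation identity on arbitrary finitely-supported $\mathbf{t}$ are immediate from $U$ being a group representation and from the definition of $\varphi$.
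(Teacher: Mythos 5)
Your proof is correct, and the forward direction coincides with the paper's argument verbatim: restrict the global dilation to a finite subfamily, apply \Cref{thm:classification:sig:article-dilation-problem-raj-dahya}, and note $S_{(T_{i})_{i \in I}, K} = S_{(T_{i})_{i \in K}, K}$. The converse, however, takes a genuinely different route. The paper also reduces to the finite case to obtain $B_{(T_{i})_{i \in I}, K}(t) = B_{(T_{i})_{i \in K}, K}(t) \geq \zeromatrix$ for all $t\in\realsPos$ and all finite $K \subseteq I$, but then simply invokes \cite[Theorem~3.2]{Ptak1985}, which is already formulated for arbitrarily large commuting families, to conclude. You instead re-derive that infinite-parameter dilation theorem from scratch: positive definiteness of $\varphi$ on the discrete group of finitely supported tuples (which indeed localises to finite subfamilies, since any finite subset of the group lives in some $\reals^{K}$), the Sz.-Nagy/GNS construction, and a coordinate-wise continuity upgrade via the identity $\norm{U_{i}(t)r\xi - r\xi}^{2} = 2\norm{\xi}^{2} - 2\Re\BRAKET{T_{i}(t)\xi}{\xi}$ together with minimality and an $\eps/3$-argument. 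All of these steps are sound (one small simplification: rather than passing through Brehmer positivity, you can get positive definiteness of $\varphi$ restricted to each $\reals^{K}$ directly from the fact that the finite subfamily has a regular unitary dilation, since compressions of unitary representations are positive definite). What your approach buys is self-containedness — you do not need the infinite-index formulation of Ptak's result, only the finite-parameter classification; what the paper's approach buys is brevity, since the heavy lifting you reproduce is exactly the content of the cited theorem.
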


This proof relies on the fact that \Cref{thm:brehmer-iff-dilation:sig:article-dilation-problem-raj-dahya}
is formulated for arbitrarily large families of commuting $\Cnought$-semigroups
in \cite{Ptak1985}.

    \begin{proof}[of \Cref{\beweislabel}]
        For the \usesinglequotes{only if}-direction,
        suppose that $(T_{i})_{i \in I}$ has regular unitary dilation.
        Then using this dilation, clearly every finite subset
        $K \subseteq I$ has a regular unitary dilation,
        and thus by the classification theorem (\Cref{thm:classification:sig:article-dilation-problem-raj-dahya}),
        $(T_{i})_{i \in K}$ has completely dissipative generators.
        In particular,
            $
                S_{(T_{i})_{i \in I}, K}
                = S_{(T_{i})_{i \in K},K}
                \geq \zeromatrix
            $.
        Since this holds for all finite $K \subseteq I$,
        by definition, the generators of $(T_{i})_{i \in I}$ are completely dissipative.

        Towards the \usesinglequotes{if}-direction,
        suppose that the generators of $(T_{i})_{i \in I}$ are completely dissipative.
        Then clearly for every finite subset $K \subseteq I$
        the generators of $(T_{i})_{i \in K}$ are completely dissipative,
        and thus by the classification theorem (\Cref{thm:classification:sig:article-dilation-problem-raj-dahya})
            $(T_{i})_{i \in K}$ has a regular unitary dilation.
        By \Cref{thm:brehmer-iff-dilation:sig:article-dilation-problem-raj-dahya}
        it follows that the Brehmer operators associated to
            $(T_{i})_{i \in K}$ are positive for all $t\in\realsPos$.
        Thus
            $%
                B_{(T_{i})_{i \in I}, K}(t)
                = B_{(T_{i})_{i \in K}, K}(t)
                \geq \zeromatrix
            $
        for all $t\in\realsPos$.
        So $B_{(T_{i})_{i \in I}, K}(t) \geq \zeromatrix$ for all $t\in\realsPos$
        and for all finite $K \subseteq I$.
        \Idest, the commuting system $(T_{i})_{i \in I}$ of $\Cnought$-semigroups
        satisfies the Brehmer positivity criterion.
        By \cite[Theorem~3.2]{Ptak1985} it follows that $(T_{i})_{i \in I}$ has a regular unitary dilation.
    \end{proof}



\subsection[Sufficient norm conditions]{Sufficient norm conditions}
\label{sec:applications:approximation}

\firstparagraph
As it is not always so easy to check the spectral values of (the dissipation) operators,
we wish to provide simple norm-conditions in terms on the generators,
which guarantee complete (super) dissipativity.
We shall show that it suffices to control the relative norm-bounds of the generators.
To this end we introduce the following definition.

\begin{defn}
    Let $A \in \BoundedOps{\HilbertRaum}$ and $r\in\realsNonNeg$.
    Say that $A$ is \highlightTerm{$r$-circular}
        (\respectively \highlightTerm{strictly $r$-circular})
    if
        $\frac{\norm{A - \omega\onematrix}}{\omega} \leq r$
        (\respectively $\frac{\norm{A - \omega\onematrix}}{\omega} < r$)
    for some $\omega\in\realsPos$.
    If $r=1$ we shall say \highlightTerm{(strictly) circular} instead of (strictly) $1$-circular.
\end{defn}

These notions are visualised in \Cref{fig:examples-circular-operators:sig:article-dilation-problem-raj-dahya}
in the case of normal operators.


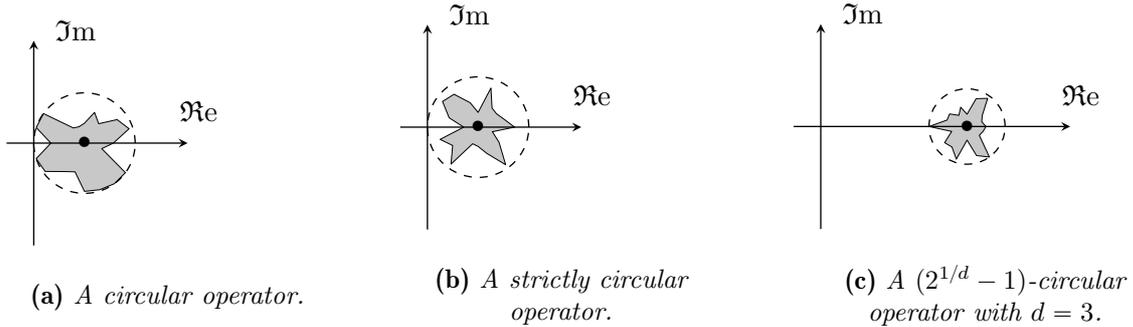
\begin{figure}[!htb]
    \centering

    \begin{subfigure}[m]{0.3\textwidth}
        \begin{tikzpicture}[node distance=1cm, thick]
            \filldraw[%
                fill={rgb,255:white,200},%
                line join=round,%
                line width=0pt,%
            ] plot coordinates{
                    (1.0469281295827075, 0.0)
                    (1.253656252801211, 0.1907244780187156)
                    (1.0958183011555136, 0.3117969134191595)
                    (0.8534834207198587, 0.2571312027199234)
                    (0.7989876233732314, 0.40724203007907367)
                    (0.6666666666666666, 0.26369468179681915)
                    (0.5959319737114303, 0.21769900001547754)
                    (0.5209343863456557, 0.20058327586287647)
                    (0.1273220037500351, 0.3918568348616488)
                    (0.03262898913656431, 0.20601132958329832)
                    (0.22531061771713112, 5.405052726303712e-17)
                    (0.0326289891365642, -0.20601132958329818)
                    (0.1495523092089749, -0.37570557253517917)
                    (0.3925507615072684, -0.37728817597498643)
                    (0.5428397918923332, -0.38109993395282743)
                    (0.6666666666666665, -0.6402268374604754)
                    (0.8675583478505868, -0.6182810199352111)
                    (1.0585235015283152, -0.5393446629166316)
                    (1.2060113295832982, -0.3918568348616488)
                    (0.8938513439532182, -0.07381677633271723)
                    (1.0469281295827075, 0.0)
                }--cycle;
                \node (M) at (0.6666666666666666, 0) {$\bullet$};
                \node (yAxisMin) at (0, -1.5) {};
                \node[label=right:{$\Imag$}] (yAxisMax) at (0, 1.5) {};
                \node (xAxisMin) at (-0.5, 0) {};
                \node[label=above:{$\Re$}] (xAxisMax) at (2.1666666666666665, 0) {};
                \draw[->,line width=0.5pt] (yAxisMin) edge (yAxisMax);
                \draw[->,line width=0.5pt] (xAxisMin) edge (xAxisMax);
                \draw[line width=0.5pt,dashed] (0.6666666666666666, 0) circle [radius=0.6666666666666666];
            \end{tikzpicture}
        \caption{A circular operator.}
    \end{subfigure}
    \hraum
    \begin{subfigure}[m]{0.3\textwidth}
        \begin{tikzpicture}[node distance=1cm, thick]
            \filldraw[%
                fill={rgb,255:white,200},%
                line join=round,%
                line width=0pt,%
            ] plot coordinates{
                    (1.146331968320027, 0.0)
                    (1.0024484724874236, 0.1091021223478171)
                    (0.9094479191694153, 0.17639090494565485)
                    (0.8500538376435264, 0.25241078657890564)
                    (0.8350567546488601, 0.5182514016062891)
                    (0.6666666666666666, 0.19528163984234617)
                    (0.5613203535807796, 0.32422261348654446)
                    (0.3498737574232762, 0.436028032816068)
                    (0.19846622943692338, 0.34016752927811306)
                    (0.2856482058008025, 0.12380040256366612)
                    (0.47787036547177597, 2.3120878594918247e-17)
                    (0.16682713545611955, -0.16240770864612916)
                    (0.3318434241116225, -0.24326332508089385)
                    (0.3101307110551556, -0.4907296433015965)
                    (0.5952399212409928, -0.21982891851060418)
                    (0.6666666666666666, -0.2559332489741431)
                    (0.7793707442349662, -0.3468674841044786)
                    (1.0291342389136915, -0.49889381319788434)
                    (0.9414119500086713, -0.19961413271684927)
                    (0.8541547843848645, -0.06091858225627627)
                    (1.146331968320027, 0.0)
                }--cycle;
                \node (M) at (0.6666666666666666, 0) {$\bullet$};
                \node (yAxisMin) at (0, -1.5) {};
                \node[label=right:{$\Imag$}] (yAxisMax) at (0, 1.5) {};
                \node (xAxisMin) at (-0.5, 0) {};
                \node[label=above:{$\Re$}] (xAxisMax) at (2.1666666666666665, 0) {};
                \draw[->,line width=0.5pt] (yAxisMin) edge (yAxisMax);
                \draw[->,line width=0.5pt] (xAxisMin) edge (xAxisMax);
                \draw[line width=0.5pt,dashed] (0.6666666666666666, 0) circle [radius=0.6666666666666666];
            \end{tikzpicture}
        \caption{A strictly circular operator.}
    \end{subfigure}
    \hraum
    \begin{subfigure}[m]{0.35\textwidth}
        \begin{tikzpicture}[node distance=1cm, thick]
            \filldraw[%
                fill={rgb,255:white,200},%
                line join=round,%
                line width=0pt,%
            ] plot coordinates{
                    (2.1732463996870477, 0.0)
                    (2.1168332864292765, 0.06276546407855715)
                    (2.123273659398018, 0.14502704917698198)
                    (2.1948737861287, 0.37329230532691254)
                    (2.043498685394311, 0.36882231472010757)
                    (1.9236610509315362, 0.16241351113441463)
                    (1.8404801334152308, 0.25600454044706566)
                    (1.816237650985143, 0.14785562552175952)
                    (1.6943452121897367, 0.1666077091911368)
                    (1.660883853249897, 0.0853814872476526)
                    (1.4236610509315362, 6.123233995736766e-17)
                    (1.7219194584572441, -0.06554981694428971)
                    (1.628859284753245, -0.21418602045962085)
                    (1.7256661949441483, -0.2725165401273342)
                    (1.78289934598526, -0.4332199819778745)
                    (1.9236610509315362, -0.1804837745231156)
                    (2.048960215269385, -0.3856311753044148)
                    (2.2175536770777726, -0.4045084971874738)
                    (2.0848539069794016, -0.11711346512942039)
                    (2.132210613830592, -0.0677618606266666)
                    (2.1732463996870477, 0.0)
                }--cycle;
                \node (M) at (1.9236610509315362, 0) {$\bullet$};
                \node (yAxisMin) at (0, -1.5) {};
                \node[label=right:{$\Imag$}] (yAxisMax) at (0, 1.5) {};
                \node (xAxisMin) at (-0.5, 0) {};
                \node[label=above:{$\Re$}] (xAxisMax) at (3.4236610509315364, 0) {};
                \draw[->,line width=0.5pt] (yAxisMin) edge (yAxisMax);
                \draw[->,line width=0.5pt] (xAxisMin) edge (xAxisMax);
                \draw[line width=0.5pt,dashed] (1.9236610509315362, 0) circle [radius=0.5];
            \end{tikzpicture}
        \caption{A $(2^{1/d}-1)$-circular operator with $d=3$.}
    \end{subfigure}

    \caption{%
        Visual examples of the spectra of normal operators
        under the definition of (strict) $r$-circularity,
        depicted as shaded regions of the complex plane
        contained in a disc around $\omega$.
    }
    \label{fig:examples-circular-operators:sig:article-dilation-problem-raj-dahya}
\end{figure}


\begin{prop}
\makelabel{prop:strict-circularity-implies-complete-super-dissipativity:sig:article-dilation-problem-raj-dahya}
    If each of the dissipation operators associated to the generators of $T$
    are (strictly) circular,
    then the generators of $T$ are completely (super) dissipative.
\end{prop}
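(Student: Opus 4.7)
The plan is to observe that each dissipation operator $S_{T,K}$ is self-adjoint, and then reduce the proposition to a standard spectral-theoretic fact about self-adjoint operators lying in a disc around a positive real.

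First I would note that $S_{T,K}$ is self-adjoint for every $K\subseteq\{1,2,\ldots,d\}$: the involutive bijection $(C_{1},C_{2})\mapsto(C_{2},C_{1})$ on the set of partitions of $K$ sends the summand $A^{-}(C_{1})^{\ast}A^{-}(C_{2})$ to its adjoint $A^{-}(C_{2})^{\ast}A^{-}(C_{1})$, so pairing the summands inside the defining sum shows $S_{T,K}^{\ast}=S_{T,K}$.

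Next, for any self-adjoint $S\in\BoundedOps{\HilbertRaum}$ and any $\omega\in\realsPos$, the spectral theorem gives that $\norm{S-\omega\onematrix}\leq\omega$ is equivalent to $\opSpectrum{S}\subseteq[0,\:2\omega]$, hence to $S\geq\zeromatrix$; and the strict inequality $\norm{S-\omega\onematrix}<\omega$, which necessarily amounts to $\norm{S-\omega\onematrix}\leq r\omega$ for some real $r<1$, is equivalent to $\opSpectrum{S}\subseteq[\omega(1-r),\:\omega(1+r)]$, yielding $S\geq\omega(1-r)\onematrix$ with $\omega(1-r)>0$. Applying these two equivalences to each $S_{T,K}$ using the witness $\omega_{K}\in\realsPos$ supplied by its (strict) circularity immediately delivers both conclusions: in the circular case one obtains $S_{T,K}\geq\zeromatrix$ for every $K\subseteq\{1,2,\ldots,d\}$, which is precisely the defining condition of complete dissipativity; in the strictly circular case each $S_{T,K}\geq c_{K}\onematrix$ for some $c_{K}>0$, and since $K$ ranges over the finite set $\Pot(\{1,2,\ldots,d\})$, we conclude $\beta_{T}\geq\min_{K}c_{K}>0$, i.e.\ complete super dissipativity.

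The only mild subtlety is the self-adjointness verification in the first step; beyond that, the argument is a direct spectral computation applied to each dissipation operator separately, and presents no further obstacle.
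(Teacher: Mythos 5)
Your proposal is correct and follows essentially the same route as the paper: both reduce to the standard spectral fact that a self-adjoint $S$ with $\norm{S-\omega\onematrix}\leq\omega$ (respectively $<\omega$) satisfies $S\geq\delta\onematrix$ with $\delta=\omega(1-\norm{S-\omega\onematrix}/\omega)\geq 0$ (respectively $>0$), and then take the minimum over the finitely many $K\subseteq\{1,2,\ldots,d\}$ to bound $\beta_{T}$. Your explicit verification of self-adjointness via the partition involution is a small addition the paper omits (it builds self-adjointness into \Cref{defn:dissipation-operators-and-conditions:sig:article-dilation-problem-raj-dahya}), but otherwise the arguments coincide.
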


    \begin{proof}
        First observe that if a self-adjoint operator
            $S \in \BoundedOps{\HilbertRaum}$
        is (strictly) circular,
        then for some $\omega\in\realsPos$
        one has
            $%
                S \geq \omega\onematrix - \norm{S - \omega\onematrix}\onematrix
                = \delta\onematrix
            $
        where $\delta = \omega\cdot(1 - \frac{\norm{S - \omega\onematrix}}{\omega}) \geq 0$
            (\respectively $\delta > 0$).
        Thus since each $S_{T,K}$ is (strictly) circular,
        one has that $S_{T,K} \geq \delta_{K}\onematrix$
        for some $\delta_{K}\in\realsNonNeg$ (\respectively $\delta_{K}\in\realsPos$)
        for each $K\subseteq\{1,2,\ldots,d\}$.
        It follows that
            $\beta_{T} \geq \min\{\delta_{K} \mid K\subseteq\{1,2,\ldots,d\}\} \geq 0$
            (\respectively $\beta_{T} > 0$)
        whence by definition the generators of $T$ are completely (super) dissipative.
    \end{proof}

Relying on the self-similarity of dissipation operators under shifts
the following result provides sufficient means to guarantee (strict) circularity.

\begin{prop}
\makelabel{prop:dissipation-operators-in-terms-of-deviations:sig:article-dilation-problem-raj-dahya}
    Let $T$ be a $d$-parameter $\Cnought$-semigroup over $\HilbertRaum$
    with bounded generators $A_{1},A_{2},\ldots,A_{d}$.
    Fix $\boldsymbol{\omega} \in (\reals \without \{0\})^{d}$
    and let $\tilde{T} \colonequals E_{-\boldsymbol{\omega}} \cdot T$
    be the $(-\boldsymbol{\omega})$-shift of $T$.
    Then

        \begin{maths}[mc]{c}
        \eqtag[eq:dissipation-operators-in-terms-of-deviations-norm:sig:article-dilation-problem-raj-dahya]
            \frac{\norm{S_{T,K} - \omega_{K}\onematrix}}{\abs{\omega_{K}}}
                \leq
                    \displaystyle
                    \sum_{\substack{
                        K^{\prime}\subseteq K:\hraum\\
                        K^{\prime} \neq \emptyset\hraum
                    }}
                        \textstyle
                        \frac{
                            \norm{S_{\tilde{T},K^{\prime}}}
                        }{\abs{\omega_{K^{\prime}}}}
                \leq
                    \displaystyle
                    \prod_{i \in K}
                    \Big(
                        1
                        +
                        \textstyle
                        \frac{\norm{A_{i} + \omega_{i}\onematrix}}{\abs{\omega_{i}}}
                    \Big)
                    - 1\\
        \end{maths}

    \continueparagraph
    holds for all $K \subseteq \{1,2,\ldots,d\}$.
\end{prop}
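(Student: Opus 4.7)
The plan is to exploit the self-similarity identity of \Cref{prop:algebraic-identity-scaled-shifts:sig:article-dilation-problem-raj-dahya} applied to $\tilde{T}$ rather than to $T$ directly. Since $\tilde{T} = E_{-\boldsymbol{\omega}} \cdot T$, one has $T = E_{\boldsymbol{\omega}} \cdot \tilde{T}$, so applying \eqcref{eq:self-similarity-dissipation-operators:sig:article-dilation-problem-raj-dahya} with $\tilde{T}$ in place of $T$ yields
\[
    S_{T,K}
        = \sum_{K' \subseteq K} \omega_{K \without K'} S_{\tilde{T},K'}
        = \omega_K \onematrix
            + \sum_{\emptyset \neq K' \subseteq K} \omega_{K \without K'} S_{\tilde{T},K'},
\]
using that $S_{\tilde{T},\emptyset} = \onematrix$ and $\omega_{K \without \emptyset} = \omega_K$. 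Rearranging, taking the operator norm, and dividing through by $|\omega_K|$ gives the first inequality of \eqcref{eq:dissipation-operators-in-terms-of-deviations-norm:sig:article-dilation-problem-raj-dahya}, since $|\omega_{K \without K'}|/|\omega_K| = 1/|\omega_{K'}|$ for each $K' \subseteq K$.

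For the second inequality, I would estimate $\|S_{\tilde{T},K'}\|$ directly from its defining sum over partitions. Writing $\tilde{A}_i \colonequals A_i + \omega_i \onematrix$ for the generators of $\tilde{T}$ and applying the triangle inequality together with submultiplicativity of the norm gives
\[
    \|S_{\tilde{T},K'}\|
        \leq 2^{-\card{K'}}
            \sum_{\isPartition{(C_1,C_2)}{K'}}
                \prod_{i \in C_1} \|\tilde{A}_i\| \cdot \prod_{i \in C_2} \|\tilde{A}_i\|
        = \prod_{i \in K'}\|A_i + \omega_i\onematrix\|,
\]
since for each partition the product over $C_1$ times the product over $C_2$ collapses to the product over $K'$, and there are exactly $2^{\card{K'}}$ such partitions. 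Dividing by $|\omega_{K'}|$ distributes the denominator as $\prod_{i \in K'}|\omega_i|$.

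Finally, the binomial-product identity highlighted at the end of \S{}\ref{sec:definitions} (namely $\sum_{K' \subseteq K}\prod_{i \in K'} x_i = \prod_{i \in K}(1 + x_i)$ applied to the commuting scalars $x_i = \|A_i + \omega_i \onematrix\|/|\omega_i|$) gives
\[
    \sum_{\emptyset \neq K' \subseteq K}
        \prod_{i \in K'} \frac{\|A_i + \omega_i \onematrix\|}{|\omega_i|}
    = \prod_{i \in K}\Big(1 + \frac{\|A_i + \omega_i \onematrix\|}{|\omega_i|}\Big) - 1,
\]
which chains with the previous estimates to complete the proof. There is no real obstacle here: once the self-similarity identity is invoked in the correct direction (i.e.\ expressing $S_{T,K}$ via $S_{\tilde{T},K'}$, not the reverse), both inequalities reduce to routine norm estimates and the standard product-expansion identity.
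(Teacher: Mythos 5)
Your proposal is correct and follows essentially the same route as the paper: invert the roles of $T$ and $\tilde{T}$ in the self-similarity identity of \Cref{prop:algebraic-identity-scaled-shifts:sig:article-dilation-problem-raj-dahya} (using $T = E_{\boldsymbol{\omega}} \cdot \tilde{T}$ and $S_{\tilde{T},\emptyset} = \onematrix$) to get the first inequality, then bound $\norm{S_{\tilde{T},K^{\prime}}}$ by $\prod_{i \in K^{\prime}}\norm{A_{i} + \omega_{i}\onematrix}$ via the triangle inequality over the $2^{\card{K^{\prime}}}$ partitions, and finish with the binomial product expansion. The only cosmetic difference is that you write out the averaging count explicitly where the paper simply observes that $S_{\tilde{T},K^{\prime}}$ is an average of products with each index occurring once.
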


    \begin{proof}
        We shall apply \Cref{prop:algebraic-identity-scaled-shifts:sig:article-dilation-problem-raj-dahya}
        to $\tilde{T}$ and $\boldsymbol{\omega}$.
        Observe that $T = E_{\boldsymbol{\omega}} \cdot \tilde{T}$
        and $\tilde{T}$ has generators
            $\tilde{A}_{i} \colonequals A_{i} + \omega_{i}\onematrix$
        for each $i\in\{1,2,\ldots,d\}$.
        Let $K \subseteq \{1,2,\ldots,d\}$.
        Since $S_{\tilde{T},\emptyset} = \onematrix$,
        applying
            \eqcref{eq:self-similarity-dissipation-operators:sig:article-dilation-problem-raj-dahya}
        to $\tilde{T}$ and $\boldsymbol{\omega}$
        yields

            \begin{maths}[mc]{c}
                \omega_{K}^{-1}
                \cdot (S_{T,K} - \omega_{K}\onematrix)
                =
                    \omega_{K}^{-1}
                    \cdot (S_{E_{\boldsymbol{\omega}} \cdot \tilde{T},K} - \omega_{K}\onematrix)
                =
                    \displaystyle
                    \sum_{\substack{
                        K^{\prime} \subseteq K:\hraum\\
                        K^{\prime} \neq \emptyset\hraum
                    }}
                        \omega_{K^{\prime}}^{-1}
                        S_{\tilde{T},K^{\prime}},
            \end{maths}

        \continueparagraph
        which implies the first inequality.
        Now, since
            $S_{\tilde{T},K^{\prime}}$
        is simply an average of products of the operators
            $\{
                (\tilde{A}^{-}_{i})^{\ast},
                \tilde{A}^{-}_{i}
                \mid
                i \in K^{\prime}
            \}$
        with each index from $K^{\prime}$ occurring exactly once in each product,
        one has
            $
                \norm{S_{\tilde{T},K^{\prime}}}
                \leq \prod_{i \in K^{\prime}}\norm{\tilde{A}^{-}_{i}}
                = \prod_{i \in K^{\prime}}
                    \norm{A_{i} + \omega_{i}\onematrix}
            $
        for each $K^{\prime} \subseteq K$,
        whence

            \begin{maths}[bc]{rcl}
                \displaystyle
                \sum_{\substack{
                    K^{\prime} \subseteq K:\hraum\\
                    K^{\prime} \neq \emptyset\hraum
                }}
                    \textstyle
                    \frac{
                        \norm{S_{\tilde{T},K^{\prime}}}
                    }{\abs{\omega_{K^{\prime}}}}
                \leq
                    \displaystyle
                    \sum_{\substack{
                        K^{\prime} \subseteq K:\hraum\\
                        K^{\prime} \neq \emptyset\hraum
                    }}
                        \displaystyle
                        \prod_{i \in K^{\prime}}
                            \frac{\norm{A_{i} + \omega_{i}\onematrix}}{\abs{\omega_{i}}}
                =
                    \displaystyle
                    \prod_{i \in K}
                        \Big(
                            1
                            +
                            \textstyle
                            \frac{\norm{A_{i} + \omega_{i}\onematrix}}{\abs{\omega_{i}}}
                        \Big)
                    - 1,
            \end{maths}

        \continueparagraph
        which proves the second inequality.
    \end{proof}

Thus, if the generators are (strictly) $r$-circular,
then the dissipation operators are at least (strictly) $((1 + r)^{d}-1)$-circular.
In this way, we may prove \Cref{thm:relative-norm-bound-generators-implies-dilation:sig:article-dilation-problem-raj-dahya}.

    \def\beweislabel{thm:relative-norm-bound-generators-implies-dilation:sig:article-dilation-problem-raj-dahya}
    \begin{proof}[of \Cref{\beweislabel}]
        By \Cref{thm:classification:sig:article-dilation-problem-raj-dahya}
        it suffices to show that the generators of $T$ are completely dissipative.
        And by \Cref{prop:strict-circularity-implies-complete-super-dissipativity:sig:article-dilation-problem-raj-dahya}
        it suffices in turn to show that the dissipation operators associated to $T$
        are circular.
        Using $\boldsymbol{\omega} \in \realsPos^{d}$ as in the assumption
        we now consider the $\boldsymbol{\omega}$-shift
            ${E_{\boldsymbol{\omega}} \cdot T}$ of $T$.
        By \Cref{prop:dissipation-operators-in-terms-of-deviations:sig:article-dilation-problem-raj-dahya},
        in order to obtain the circularity of $T$,
        it suffices to show that the right-hand expression in
        \eqcref{eq:dissipation-operators-in-terms-of-deviations-norm:sig:article-dilation-problem-raj-dahya}
        is bounded by $1$ for each $K \subseteq \{1,2,\ldots,d\}$.
        Applying the inequality in the assumption yields
            $
                \prod_{i \in K}
                    (
                        1 + \frac{\norm{A_{i} + \omega_{i}\onematrix}}{\omega_{i}}
                    )
                - 1
                    \leq
                        \prod_{i = 1}^{d}
                            (
                                1 + \frac{\norm{A_{i} + \omega_{i}\onematrix}}{\omega_{i}}
                            )
                        - 1
                    \leq (1 + 2^{1/d} - 1)^{d} - 1
                    = 1
            $
        for all $K\subseteq\{1,2,\ldots,d\}$.
        This completes the proof.
    \end{proof}

\begin{rem}
    \Cref{thm:relative-norm-bound-generators-implies-dilation:sig:article-dilation-problem-raj-dahya}
    further enables us to obtain an alternative proof
    of \Cref{cor:dilatable-up-to-exponential-modification:sig:article-dilation-problem-raj-dahya}.
    Let $A_{1},A_{2},\ldots,A_{d}$ be the generators of $T$.
    Choose any
        $
            \boldsymbol{\omega}
                \in
                    \realsPos^{d}
                    \cap \prod_{i=1}^{d}
                        [\frac{\norm{A_{i}}}{2^{1/d} - 1},\:\infty)
        $
    and set $\tilde{T} \colonequals E_{\boldsymbol{\omega}} \cdot T$.
    Then $\tilde{T}$ is a $d$-parameter $\Cnought$-semigroup
    with generators
        $\tilde{A}_{i} = A_{i} - \omega_{i}\cdot\onematrix$
    for all $i\in\{1,2,\ldots,d\}$.
    By construction one thus has
        $
            \frac{
                \norm{\tilde{A}_{i} + \omega_{i}\cdot\onematrix}
            }{\omega_{i}}
            = \frac{\norm{A_{i}}}{\omega_{i}}
            \leq 2^{1/d} - 1
        $
    for all $i\in\{1,2,\ldots,d\}$.
    Thus $\boldsymbol{\omega}$ witnesses that $\tilde{T}$ satisfies the conditions in
        \Cref{thm:relative-norm-bound-generators-implies-dilation:sig:article-dilation-problem-raj-dahya},
    which says that $\tilde{T}$ possesses a regular unitary dilation.
\end{rem}

Finally, observe that,
since the spectral radius of a bounded operator over a Hilbert space is bounded by its norm,
the norm requirement in \Cref{thm:relative-norm-bound-generators-implies-dilation:sig:article-dilation-problem-raj-dahya}
clearly implies that the generators of $T$ have strictly negative spectral bounds,
provided $d \geq 2$.
However, as shall be shown in \S{}\ref{sec:examples:non-doubly-comm},
this assumption does not suffice
for the existence of regular unitary dilations.



\subsection[Regular exponents]{Regular exponents}
\label{sec:applications:approximation}

\firstparagraph
In \Cref{cor:dilatable-up-to-exponential-modification:sig:article-dilation-problem-raj-dahya}
we showed that all multi-parameter $\Cnought$-semigroups with bounded generators
can be modified (shifted) to regularly unitarily dilatable semigroups.
In this section we provide some basic facts about the set of such shifts.

\begin{defn}
    Let $T$ be a $d$-parameter $\Cnought$-semigroup over $\HilbertRaum$.
    The \highlightTerm{regular exponents} of $T$
    is the set $\opRegRevExponents{T} \subseteq \reals^{d}$
    of all $\boldsymbol{\omega}\in\reals^{d}$
    for which the $\boldsymbol{\omega}$-shift $E_{\boldsymbol{\omega}} \cdot T$ of $T$
    has a regular unitary dilation.
\end{defn}

In other words, $\opRegRevExponents{T}$ contains all information
about how to modify the dynamical system $T$ so that it
can be embedded via regular unitary dilations into reversible processes.
Note that by \Cref{thm:classification:sig:article-dilation-problem-raj-dahya}
    $\opRegRevExponents{T}$
can be equivalently formulated as the set of all
$\boldsymbol{\omega}\in\reals^{d}$
such that the $d$ generators of $E_{\boldsymbol{\omega}} \cdot T$
are completely dissipative.

In order to study $\opRegRevExponents{T}$,
we first show that
    ${\reals^{d} \ni \boldsymbol{\omega} \mapsto \beta_{E_{\boldsymbol{\omega}} \cdot T} \in \reals}$
is continuous.

\begin{prop}
    Let $T$ be a $d$-parameter $\Cnought$-semigroup over $\HilbertRaum$
    with bounded generators $A_{1},A_{2},\ldots,A_{d}$.
    Then

        \begin{maths}[mc]{c}
        \eqtag[eq:continuity-of-beta-T:sig:article-dilation-problem-raj-dahya]
                \displaystyle
                \prod_{i=1}^{d}
                    \norm{A_{i} - \omega_{i}}
                -
                \displaystyle
                \prod_{i=1}^{d}
                    (\abs{\omega_{i}} + \norm{A_{i} - \omega_{i}})
            \leq
                \beta_{E_{\boldsymbol{\omega}} \cdot T} - \beta_{T}
            \leq
                \displaystyle
                    \displaystyle
                    \prod_{i=1}^{d}
                        (\abs{\omega_{i}} + \norm{A_{i}})
                    -
                    \displaystyle
                    \prod_{i=1}^{d}
                        \norm{A_{i}}\\
        \end{maths}

    \continueparagraph
    for all $\boldsymbol{\omega} \in \reals^{d}$.
    In particular,
        $\boldsymbol{\omega} \mapsto \beta_{E_{\boldsymbol{\omega}} \cdot T}$
    is continuous.
\end{prop}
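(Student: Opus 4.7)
The plan is to exploit the self-similarity identity of Proposition~\ref{prop:algebraic-identity-scaled-shifts:sig:article-dilation-problem-raj-dahya} applied to $\tilde{T} \colonequals E_{\boldsymbol{\omega}}\cdot T$, whose bounded generators are $\tilde{A}_{i} = A_{i} - \omega_{i}\cdot\onematrix$. That identity gives
\[
S_{\tilde{T},K} - S_{T,K} \;=\; \sum_{K'\subsetneq K}\omega_{K\setminus K'}\,S_{T,K'}
\]
for every $K\subseteq\{1,2,\ldots,d\}$. Since $S_{T,K'}$ is a normalised average of $|K'|$-fold products of operators drawn from $\{A_{i}^{-},(A_{i}^{-})^{\ast}\mid i\in K'\}$ with each index used exactly once per product, the triangle inequality yields $\norm{S_{T,K'}}\leq\prod_{i\in K'}\norm{A_{i}}$. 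Combining this with the triangle inequality applied to the displayed identity and summing via the binomial expansion $\sum_{K'\subseteq K}\prod_{i\in K\setminus K'}\abs{\omega_{i}}\prod_{i\in K'}\norm{A_{i}} = \prod_{i\in K}(\abs{\omega_{i}}+\norm{A_{i}})$ produces the operator-norm control
\[
\norm{S_{\tilde{T},K} - S_{T,K}}\;\leq\;\prod_{i\in K}(\abs{\omega_{i}} + \norm{A_{i}}) - \prod_{i\in K}\norm{A_{i}}.
\]

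The second ingredient is the standard $1$-Lipschitz property of the minimum of the spectrum for bounded self-adjoint operators, namely $\abs{\min\opSpectrum{A}-\min\opSpectrum{B}}\leq\norm{A-B}$. Fixing $K_{\ast}$ at which $\beta_{T}$ is attained, one has $\beta_{\tilde{T}}\leq\min\opSpectrum{S_{\tilde{T},K_{\ast}}}\leq\beta_{T} + \norm{S_{\tilde{T},K_{\ast}} - S_{T,K_{\ast}}}$. For the reverse direction, since $T = E_{-\boldsymbol{\omega}}\cdot\tilde{T}$, applying the same argument with the roles of $T$ and $\tilde{T}$ swapped and $\boldsymbol{\omega}$ replaced by $-\boldsymbol{\omega}$ yields $\beta_{T}\leq\beta_{\tilde{T}} + \norm{S_{\tilde{T},K_{\ast\ast}} - S_{T,K_{\ast\ast}}}$ with the analogous operator-norm estimate involving $\prod_{i\in K_{\ast\ast}}(\abs{\omega_{i}} + \norm{A_{i}-\omega_{i}\cdot\onematrix}) - \prod_{i\in K_{\ast\ast}}\norm{A_{i}-\omega_{i}\cdot\onematrix}$. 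Enlarging the subset-indexed products to the full-index-set products on the right-hand side then produces the two displayed bounds in the proposition.

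Finally, continuity of $\boldsymbol{\omega}\mapsto\beta_{E_{\boldsymbol{\omega}}\cdot T}$ at any $\boldsymbol{\omega}_{0}\in\reals^{d}$ follows by applying the two-sided estimate to the base semigroup $E_{\boldsymbol{\omega}_{0}}\cdot T$ and the shift increment $\boldsymbol{\omega}-\boldsymbol{\omega}_{0}$; as $\boldsymbol{\omega}\to\boldsymbol{\omega}_{0}$ both bounding products collapse to the common value $\prod_{i=1}^{d}\norm{A_{i}-\omega_{0,i}\cdot\onematrix}$ and so both sides vanish, whence $\beta_{E_{\boldsymbol{\omega}}\cdot T}\to\beta_{E_{\boldsymbol{\omega}_{0}}\cdot T}$. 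The main technical obstacle is the operator-norm bound on $S_{\tilde{T},K}-S_{T,K}$ via the self-similarity identity together with the passage from subset to full-index-set products; every remaining step is standard spectral-perturbation theory combined with the combinatorial manipulations already established in \S{}\ref{sec:algebra:basic}.
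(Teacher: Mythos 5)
Your argument follows essentially the same route as the paper's proof: the self-similarity identity of \Cref{prop:algebraic-identity-scaled-shifts:sig:article-dilation-problem-raj-dahya}, the estimate $\norm{S_{T,K'}} \leq \prod_{i\in K'}\norm{A_i}$ via the ``average of products'' structure, the symmetric swap $T \leftrightarrow E_{\boldsymbol{\omega}}\cdot T$ for the other direction, and the same continuity argument at an arbitrary base point. Packaging the spectral step through the $1$-Lipschitz property of $\min\opSpectrum{\cdot}$ at a minimising set $K_{\ast}$, rather than through operator inequalities valid uniformly in $K$ as the paper does, is only a cosmetic difference.

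The one step you assert without proof --- ``enlarging the subset-indexed products to the full-index-set products'', i.e. passing from $\prod_{i\in K_{\ast}}(\abs{\omega_i}+\norm{A_i}) - \prod_{i\in K_{\ast}}\norm{A_i}$ to $\prod_{i=1}^{d}(\abs{\omega_i}+\norm{A_i}) - \prod_{i=1}^{d}\norm{A_i}$ --- is not a valid inequality in general: adjoining an index $j$ multiplies the first product by $\abs{\omega_j}+\norm{A_j}$ and the second by $\norm{A_j}$, and when these factors are small the difference strictly decreases (take $A_j=\zeromatrix$, $\omega_j=0$). In fact the displayed bound itself can fail in such degenerate situations: for $d=2$, $A_1=c\onematrix$ with $c>0$, $A_2=\zeromatrix$, and $\boldsymbol{\omega}=(\omega_1,0)$ with $0<\omega_1<c$, one computes $\beta_{E_{\boldsymbol{\omega}}\cdot T}-\beta_{T}=\omega_1>0$, whereas $\prod_{i=1}^{2}(\abs{\omega_i}+\norm{A_i})-\prod_{i=1}^{2}\norm{A_i}=0$. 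To be fair, the paper's own proof makes an equivalent leap (its concluding binomial-expansion equality tacitly replaces the weight $\omega_{K\without K'}$ by $\omega_{\{1,\ldots,d\}\without K'}$), so you have reproduced the paper's argument together with its weak point rather than introduced a new error. Everything you prove up to that step is correct, and the continuity assertion --- the actual point of the proposition --- already follows from your subset-level estimate, since $\max_{K\subseteq\{1,\ldots,d\}}\bigl(\prod_{i\in K}(\abs{\omega_i}+\norm{A_i})-\prod_{i\in K}\norm{A_i}\bigr)\longrightarrow 0$ as $\boldsymbol{\omega}\longrightarrow\zerovector$; a correct quantitative bound is obtained by using this maximum, or for instance $\prod_{i=1}^{d}(1+\abs{\omega_i}+\norm{A_i})-\prod_{i=1}^{d}(1+\norm{A_i})$, in place of the full-index-set products.
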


    \begin{proof}
        Let $K\subseteq\{1,2,\ldots,d\}$.
        By \eqcref{eq:self-similarity-dissipation-operators:sig:article-dilation-problem-raj-dahya}
        in \Cref{prop:algebraic-identity-scaled-shifts:sig:article-dilation-problem-raj-dahya}
        one has
            $
                S_{E_{\boldsymbol{\omega}} \cdot T, K}
                =
                S_{T, K}
                + \sum_{K^{\prime} \subsetneq K}
                    \omega_{K \without K^{\prime}}
                    S_{T, K^{\prime}}
            $.
        Thus

            \begin{longmaths}[mc]{RCL}
                S_{T, K}
                    &=
                        &S_{E_{\boldsymbol{\omega}} \cdot T, K}
                        - \displaystyle
                        \sum_{K^{\prime} \subsetneq K}
                            \omega_{K \without K^{\prime}}
                            S_{T, K^{\prime}}\\
                    &\geq
                        &\beta_{E_{\boldsymbol{\omega}} \cdot T}\onematrix
                        - \displaystyle
                        \sum_{K^{\prime} \subsetneq K}
                            \abs{\omega_{K \without K^{\prime}}}
                            \norm{S_{T, K^{\prime}}}
                            \,\onematrix\\
                    &\overset{(\ast)}{\geq}
                        &\beta_{E_{\boldsymbol{\omega}} \cdot T}\onematrix
                        - \displaystyle
                        \sum_{K^{\prime} \subsetneq K}
                            \abs{\omega_{K \without K^{\prime}}}
                            \prod_{i \in K^{\prime}}\norm{A_{i}}
                            \,\onematrix\\
                    &\geq
                        &\beta_{E_{\boldsymbol{\omega}} \cdot T}\onematrix
                        - \displaystyle
                        \sum_{K^{\prime} \subsetneq \{1,2,\ldots,d\}}
                            \abs{\omega_{K \without K^{\prime}}}
                            \prod_{i \in K^{\prime}}\norm{A_{i}}
                            \,\onematrix\\
                    &=
                        &\beta_{E_{\boldsymbol{\omega}} \cdot T}\onematrix
                        - \Big(
                            \displaystyle
                            \prod_{i=1}^{d}
                                (\norm{A_{i}} + \abs{\omega_{i}})
                            -
                            \displaystyle
                            \prod_{i=1}^{d}
                                \norm{A_{i}}
                        \Big)
                        \onematrix,\\
            \end{longmaths}

        \continueparagraph
        where ($\ast$) holds, since each $S_{T,K^{\prime}}$ is an average
        of products of $\{A^{-}_{i},(A^{-}_{i})^{\ast}\mid i \in K^{\prime}\}$
        with each index occurring exactly once in each product,
        and the final equality holds by the binomial expansion.
        Since this inequality holds for all $K\subseteq\{1,2,\ldots,d\}$,
        it follows that

            \begin{maths}[mc]{rcl}
                \beta_{E_{\boldsymbol{\omega}} \cdot T}
                    \leq
                        \beta_{T}
                        +
                        \displaystyle
                        \prod_{i=1}^{d}
                            (\norm{A_{i}} + \abs{\omega_{i}})
                        -
                        \displaystyle
                        \prod_{i=1}^{d}
                            \norm{A_{i}},\\
            \end{maths}

        \continueparagraph
        and hence the right-hand inequality
        in \eqcref{eq:continuity-of-beta-T:sig:article-dilation-problem-raj-dahya} holds.

        Let $\tilde{T} \colonequals E_{\boldsymbol{\omega}} \cdot T$.
        From the generality of the right-hand inequality in \eqcref{eq:continuity-of-beta-T:sig:article-dilation-problem-raj-dahya},
        observing that $\tilde{T}$ has generators
            $\tilde{A}_{i} = A_{i} - \omega_{i}$
        for each $i \in \{1,2,\ldots,d\}$
        and $E_{-\boldsymbol{\omega}}\cdot \tilde{T} = T$,
        one readily obtains the left-hand inequality in \eqcref{eq:continuity-of-beta-T:sig:article-dilation-problem-raj-dahya}.

        From
            \eqcref{eq:continuity-of-beta-T:sig:article-dilation-problem-raj-dahya}
        it is clear that
            ${\abs{\beta_{E_{\boldsymbol{\omega}} \cdot T} - \beta_{T}} \longrightarrow 0}$
        for
            ${\reals^{d}\ni\boldsymbol{\omega} \longrightarrow \zerovector}$.
        Letting $\boldsymbol{\omega}\in\reals^{d}$ be arbitrary
        and $\tilde{T} \colonequals E_{\boldsymbol{\omega}}\cdot T$,
        if we consider
            ${\reals^{d}\ni\boldsymbol{\omega}^{\prime} \longrightarrow \boldsymbol{\omega}}$,
        then
            ${\reals^{d}\ni\boldsymbol{\omega}^{\prime}-\boldsymbol{\omega} \longrightarrow \zerovector}$
        and hence
            $
                \beta_{E_{\boldsymbol{\omega}^{\prime}} \cdot T}
                =\beta_{E_{\boldsymbol{\omega}^{\prime}-\boldsymbol{\omega}} \cdot \tilde{T}}
                \longrightarrow
                \beta_{\tilde{T}}
                = \beta_{E_{\boldsymbol{\omega}} \cdot T}
            $.
        Thus the map is continuous in all points.
    \end{proof}

\begin{prop}
    Let $T$ be a $d$-parameter $\Cnought$-semigroup over $\HilbertRaum$ with bounded generators.
    Then the following hold:

    \begin{kompaktenum}{\bfseries (i)}[\rtab]
        \item\punktlabel{1}
            The set $\opRegRevExponents{T}$ is non-empty and closed.
        \item\punktlabel{2}
            The inclusion
                $\opRegRevExponents{T}\supseteq\boldsymbol{\omega} + \realsNonNeg^{d}$
            holds for each $\boldsymbol{\omega} \in \opRegRevExponents{T}$.
        \item\punktlabel{3}
            For each
                $\boldsymbol{\omega} \in \reals^{d}$
                and
                $i\in\{1,2,\ldots,d\}$
            there exists $r\in\reals$,
            such that
                $\boldsymbol{\omega} - t\mathbf{e}_{i} \notin \opRegRevExponents{T}$
            for all $t\in(r,\:\infty)$.
        \item\punktlabel{4}
            The interior $\topInterior{\opRegRevExponents{T}}$
            is non-empty and consists of all $\boldsymbol{\omega}\in\reals^{d}$
            for which $E_{\boldsymbol{\omega}} \cdot T$ has completely super dissipative generators.
        \item\punktlabel{5}
            The boundary $\da\opRegRevExponents{T}$
            consists of all shifts $\boldsymbol{\omega}\in\reals^{d}$
            for which $\beta_{\boldsymbol{\omega} \cdot T}=0$.
        \item\punktlabel{6}
            The set $\opRegRevExponents{T}$
            is completely determined by the boundary via
                $\opRegRevExponents{T} = \da\opRegRevExponents{T} + \realsNonNeg^{d}$.
    \end{kompaktenum}

    \nvraum{1}

\end{prop}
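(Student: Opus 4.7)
The plan is to leverage two facts: the continuity of $\boldsymbol\omega \mapsto \beta_{E_{\boldsymbol\omega}\cdot T}$ established just above, and the identification $\opRegRevExponents{T} = \{\boldsymbol\omega\in\reals^d : \beta_{E_{\boldsymbol\omega}\cdot T}\geq 0\}$ obtained by combining \Cref{thm:classification:sig:article-dilation-problem-raj-dahya} with \Cref{defn:dissipation-operators-and-conditions:sig:article-dilation-problem-raj-dahya}. With these in hand, part (i) is immediate: $\opRegRevExponents{T}$ is closed as the preimage of $[0,\infty)$ under a continuous map, and it is non-empty because \Cref{lemm:self-similarities-imply-perturbations-yield-positive-beta:sig:article-dilation-problem-raj-dahya} produces $\beta_{E_{\boldsymbol\omega}\cdot T} > 0$ whenever $\boldsymbol\omega \in \realsPos^d$ has sufficiently large coordinates. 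For (ii), writing $\tilde T \colonequals E_{\boldsymbol\omega}\cdot T$ so that $\beta_{\tilde T}\geq 0$, the same lemma applied to $\tilde T$ will yield $\beta_{E_{\boldsymbol\omega'}\cdot \tilde T} > 0$ for every $\boldsymbol\omega' \in \realsPos^d$, and closedness from (i) will extend the conclusion to $\boldsymbol\omega' \in \realsNonNeg^d$ by approximation.

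For (iii), I exploit the fact that membership of $\boldsymbol\omega - t\mathbf e_i$ in $\opRegRevExponents{T}$ forces in particular the first-order dissipation operator $S_{\{i\}}$ of $E_{\boldsymbol\omega - t\mathbf e_i}\cdot T$ to be positive. Since this operator equals $-\Re A_i + (\omega_i - t)\onematrix$, positivity rearranges to $\max\opSpectrum{\Re A_i} \leq \omega_i - t$, which fails once $t > r \colonequals \omega_i - \max\opSpectrum{\Re A_i}$; boundedness of $A_i$ ensures $r$ is finite.

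For (iv), the easy inclusion $\{\beta_{E_{\boldsymbol\omega}\cdot T} > 0\} \subseteq \topInterior{\opRegRevExponents{T}}$ follows from continuity, and its non-emptiness is argued exactly as in (i). The main technical obstacle is the reverse: I must show that if $\tilde T \colonequals E_{\boldsymbol\omega}\cdot T$ satisfies $\beta_{\tilde T} = 0$, then arbitrarily small perturbations of $\boldsymbol\omega$ in a well-chosen coordinate direction leave $\opRegRevExponents{T}$. To this end I pick $K^* \subseteq \{1,2,\ldots,d\}$ of minimal cardinality with $\min\opSpectrum{S_{\tilde T, K^*}} = 0$; since $S_{\tilde T, \emptyset} = \onematrix$ one has $K^* \neq \emptyset$, so some $i \in K^*$ is available, and by minimality $S_{\tilde T, K^*\setminus\{i\}} \geq c\onematrix$ for some $c > 0$. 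Applying \Cref{prop:algebraic-identity-scaled-shifts:sig:article-dilation-problem-raj-dahya} to the $(-t\mathbf e_i)$-shift of $\tilde T$, most summands vanish because $(-t\mathbf e_i)_{K^*\setminus K'}$ is zero unless $K^*\setminus K' \in \{\emptyset, \{i\}\}$, collapsing the identity to $S_{E_{-t\mathbf e_i}\cdot\tilde T, K^*} = S_{\tilde T, K^*} - t\, S_{\tilde T, K^*\setminus\{i\}}$. Testing against approximate null eigenvectors $(\xi_n)$ of the positive operator $S_{\tilde T, K^*}$ then yields $\BRAKET{S_{E_{-t\mathbf e_i}\cdot\tilde T, K^*}\xi_n}{\xi_n} \leq \BRAKET{S_{\tilde T, K^*}\xi_n}{\xi_n} - tc$, which is strictly negative for $n$ large and any fixed $t > 0$. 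Hence $\boldsymbol\omega - t\mathbf e_i \notin \opRegRevExponents{T}$ for every $t > 0$, so $\boldsymbol\omega$ is not interior.

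Parts (v) and (vi) will then drop out quickly. Part (v) is a direct consequence of (i) and (iv): $\da\opRegRevExponents{T} = \opRegRevExponents{T}\setminus\topInterior{\opRegRevExponents{T}} = \{\beta_{E_{\boldsymbol\omega}\cdot T}\geq 0\}\setminus\{\beta_{E_{\boldsymbol\omega}\cdot T}>0\} = \{\beta_{E_{\boldsymbol\omega}\cdot T} = 0\}$. For (vi), the inclusion $\supseteq$ is an instance of (ii) applied to boundary points (which lie in $\opRegRevExponents{T}$ by (i)). For $\subseteq$, given $\boldsymbol\omega \in \opRegRevExponents{T}$, I consider the continuous function $s\mapsto \beta_{E_{\boldsymbol\omega - s\onevector}\cdot T}$ on $[0,\infty)$, which is non-negative at $s=0$ and, by iterating (iii) in each coordinate, eventually strictly negative. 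The intermediate value theorem will then produce some $s^*\geq 0$ with $\beta = 0$; setting $\boldsymbol\omega^* \colonequals \boldsymbol\omega - s^*\onevector \in \da\opRegRevExponents{T}$ (via (v)) gives the required decomposition $\boldsymbol\omega = \boldsymbol\omega^* + s^*\onevector$.
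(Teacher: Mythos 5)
Your proposal is correct, and for parts (i), (ii), (v) and (vi) it is essentially the paper's own argument: identify $\opRegRevExponents{T}=\{\boldsymbol{\omega}\in\reals^{d} \mid \beta_{E_{\boldsymbol{\omega}}\cdot T}\geq 0\}$, use the continuity of $\boldsymbol{\omega}\mapsto\beta_{E_{\boldsymbol{\omega}}\cdot T}$ for closedness, apply \Cref{lemm:self-similarities-imply-perturbations-yield-positive-beta:sig:article-dilation-problem-raj-dahya} for (ii), and combine (iii) with the intermediate value theorem for (vi). The genuine divergences are in (iii) and in the hard inclusion of (iv). In (iii) the paper shows that for large $t$ the $i$-th marginal of the shifted semigroup is no longer contractive (via \Cref{rem:lumer-phillips:sig:article-dilation-problem-raj-dahya}), whereas you note that the first-order dissipation operator $-\Re A_{i}+(\omega_{i}-t)\onematrix$ fails to be positive; these are the same fact in different clothing, and your version has the small virtue that the threshold $r=\omega_{i}-\max\opSpectrum{\Re A_{i}}$ visibly depends only on the $i$-th coordinate of the base point, which is precisely what makes the ``iterate (iii)'' step in (vi) legitimate (the paper glosses this point in the same way). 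For the inclusion $\topInterior{\opRegRevExponents{T}}\subseteq\{\boldsymbol{\omega} \mid \beta_{E_{\boldsymbol{\omega}}\cdot T}>0\}$ the paper argues softly: an interior point admits a small cube inside $\opRegRevExponents{T}$, so $\boldsymbol{\omega}-r\onevector\in\opRegRevExponents{T}$ for some $r>0$, and \Cref{lemm:self-similarities-imply-perturbations-yield-positive-beta:sig:article-dilation-problem-raj-dahya} then upgrades the shift by $r\onevector\in\realsPos^{d}$ to complete super dissipativity. You instead give a quantitative perturbation argument: with $K^{*}$ of minimal cardinality satisfying $\min\opSpectrum{S_{\tilde{T},K^{*}}}=0$ and $i\in K^{*}$, the identity of \Cref{prop:algebraic-identity-scaled-shifts:sig:article-dilation-problem-raj-dahya} collapses (since $-t\mathbf{e}_{i}$ has a single non-zero coordinate) to $S_{E_{-t\mathbf{e}_{i}}\cdot\tilde{T},K^{*}}=S_{\tilde{T},K^{*}}-t\,S_{\tilde{T},K^{*}\setminus\{i\}}$, and testing against approximate null vectors of $S_{\tilde{T},K^{*}}$, using $S_{\tilde{T},K^{*}\setminus\{i\}}\geq c\onematrix$ with $c>0$ by minimality, shows $\boldsymbol{\omega}-t\mathbf{e}_{i}\notin\opRegRevExponents{T}$ for every $t>0$. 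This is correct and somewhat more informative than the paper's route---it exhibits an explicit direction along which every boundary point immediately exits $\opRegRevExponents{T}$---at the cost of being longer than the cube-plus-shift argument; the only cosmetic omission is that in (iv) you should also record the trivial case $\beta_{E_{\boldsymbol{\omega}}\cdot T}<0$, where $\boldsymbol{\omega}\notin\opRegRevExponents{T}$ and hence cannot be interior.
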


    \begin{proof}
        \paragraph{\punktcref{1}:}
        By
            \Cref{cor:dilatable-up-to-exponential-modification:sig:article-dilation-problem-raj-dahya}
        $\opRegRevExponents{T}$ is non-empty.
        Since
            $%
                \opRegRevExponents{T} = \{
                    \boldsymbol{\omega}\in\reals^{d}
                    \mid
                    \beta_{E_{\boldsymbol{\omega}} \cdot T} \geq 0
                \}
            $
        and
            $\boldsymbol{\omega} \mapsto \beta_{E_{\boldsymbol{\omega}} \cdot T}$
        is continuous, it follows that $\opRegRevExponents{T}$ is closed.

        \paragraph{\punktcref{2}:}
        Fix an arbitrary $\boldsymbol{\omega}\in\opRegRevExponents{T}$.
        By closedness, it suffices to show that
            $\opRegRevExponents{T} \supseteq \boldsymbol{\omega} + \realsPos^{d}$.
        Since
            $\tilde{T} \coloneq E_{\boldsymbol{\omega}} \cdot T$
        has completely dissipative generators,
        by \Cref{lemm:self-similarities-imply-perturbations-yield-positive-beta:sig:article-dilation-problem-raj-dahya}
            $%
                E_{\boldsymbol{\omega}^{\prime} + \boldsymbol{\omega}} \cdot T
                = E_{\boldsymbol{\omega}^{\prime}} \cdot \tilde{T}
            $
        has completely (super) dissipative generators
        for $\boldsymbol{\omega}^{\prime}\in\realsPos^{d}$.
        Thus $\opRegRevExponents{T} \supseteq \boldsymbol{\omega} + \realsPos^{d}$.

        \paragraph{\punktcref{3}:}
        Let $i\in\{1,2,\ldots,d\}$ and $\boldsymbol{\omega} \in \reals^{d}$.
        Fix some
            $%
                \lambda \in \opSpectrum{\Re (A_{i} - \omega_{i}\onematrix)}
                \subseteq \reals
            $
        and set $r \colonequals -\lambda$.
        For each $t \in (r,\:\infty)$
        one has
            $%
                \opSpectrum{\Re (A_{i} - (\omega_{i} - t)\onematrix)}
                = \opSpectrum{\Re (A_{i} - \omega_{i})} + t
                \ni -r + t
            $
        and hence
            $\opSpectrum{\Re A_{i} - (\omega_{i} - t)\onematrix} \cap \realsPos \neq \emptyset$.
        It follows that
            $\Re (A_{i} - (\omega_{i} - t)\onematrix)$
        is non-dissipative
        and thus by the Lumer-Phillips form of the Hille-Yosida theorem
        (\cf \Cref{rem:lumer-phillips:sig:article-dilation-problem-raj-dahya})
        the $1$-parameter $\Cnought$-semigroup, $\tilde{T}_{i}$,
        induced by the generator $A_{i} - (\omega_{i} - t)\onematrix$
        is not contractive.
        Thus
            $E_{\boldsymbol{\omega} - t\mathbf{e}_{i}} \cdot T$,
        which has $\tilde{T}_{i}$ has its $i$th marginal,
        is not contractive and thus cannot have a regular unitary dilation.
        Hence
            $\boldsymbol{\omega} - t\mathbf{e}_{i} \notin \opRegRevExponents{T}$
        for all $t \in (r,\:\infty)$.

        \paragraph{\punktcref{4}:}
        The non-emptiness of the interior clearly follows
        the non-emptiness of $\opRegRevExponents{T}$.
        and \punktcref{2}.
        By continuity
            $\{
                \boldsymbol{\omega}\in\reals^{d}
                \mid
                \beta_{E_{\boldsymbol{\omega}} \cdot T} > 0
            \}$
        is clearly open and contained in $\opRegRevExponents{T}$.
        Conversely, consider an arbitrary point $\boldsymbol{\omega}$
        in the interior of $\opRegRevExponents{T}$.
        Then $\prod_{i=1}^{d}(\omega_{i}-2r,\:\omega_{r}+2r)\subseteq\opRegRevExponents{T}$
        for some $r\in\realsPos$.
        So $\boldsymbol{\omega}^{\prime} \colonequals (\omega_{i}-r)_{i=1}^{d} \in \opRegRevExponents{T}$.
        So the generators of
            $\tilde{T} \coloneq E_{\boldsymbol{\omega}^{\prime}} \cdot T$
        are completely dissipative.
        Since
            $%
                \boldsymbol{\omega}^{\prime\prime}
                \colonequals
                    \boldsymbol{\omega} - \boldsymbol{\omega}^{\prime}
                \in
                    \realsPos^{d}
            $,
        by \Cref{lemm:self-similarities-imply-perturbations-yield-positive-beta:sig:article-dilation-problem-raj-dahya}
            $%
                E_{\omega} \cdot T
                = E_{\boldsymbol{\omega}^{\prime\prime}} \cdot \tilde{T}
            $
        has completely super dissipative generators.
        Hence the interior $\topInterior{\opRegRevExponents{T}}$
        coincides with
            $\{
                \boldsymbol{\omega}\in\reals^{d}
                \mid
                \beta_{E_{\boldsymbol{\omega}} \cdot T} > 0
            \}$.

        \paragraph{\punktcref{5}:}
        By \punktcref{1} and \punktcref{4}
        it follows that
        $%
            \da\opRegRevExponents{T}
            = \quer{\opRegRevExponents{T}}\without\topInterior{\opRegRevExponents{T}}
            = \opRegRevExponents{T}\without\topInterior{\opRegRevExponents{T}}
            = \{\boldsymbol{\omega}\in\reals^{d} \mid \beta_{E_{\boldsymbol{\omega}} \cdot T} \geq 0~\text{and}~\beta_{E_{\boldsymbol{\omega}} \cdot T} \not > 0\}
            = \{\boldsymbol{\omega}\in\reals^{d} \mid \beta_{E_{\boldsymbol{\omega}} \cdot T} = 0\}
        $.

        \paragraph{\punktcref{6}:}
        The $\supseteq$-inclusion follows from \punktcref{1} and \punktcref{2}.
        Towards the $\subseteq$-inclusion,
        let $\boldsymbol{\omega} \in \opRegRevExponents{T}$ be arbitrary.
        Repeated applications of \punktcref{3}
        yields a sufficiently large $t_{0}\in\realsPos$,
        such that
            ${\boldsymbol{\omega} - t_{0}\onevector \notin \opRegRevExponents{T}}$.
        Thus
            ${\beta_{E_{\boldsymbol{\omega}} \cdot T} \geq 0}$
        and
            ${\beta_{E_{\boldsymbol{\omega} - t_{0}\onevector} \cdot T} < 0}$.
        Since the map
            ${t\in\reals \mapsto \beta_{E_{\boldsymbol{\omega} - t\onevector} \cdot T}}$
        is continuous, by the intermediate value theorem
        there exists some $t^{\ast}\in[0,\:t_{0}]$,
        such that
            ${\beta_{E_{\boldsymbol{\omega} - t^{\ast}\onevector} \cdot T} = 0}$.
        By \punktcref{5},
            ${\boldsymbol{\omega} - t^{\ast}\onevector \in \da\opRegRevExponents{T}}$,
        and thus
            $%
                \boldsymbol{\omega}
                = (\boldsymbol{\omega} - t^{\ast}\onevector) + t^{\ast}\onevector
                \in \da\opRegRevExponents{T} + \realsNonNeg^{d}
            $.
    \end{proof}




\section[Examples]{Examples}
\label{sec:examples}


\firstparagraph
Throughout this section we work with
$d$-parameter $\Cnought$-semigroups $T$ over a Hilbert space $\HilbertRaum$
whose marginals $T_{1},T_{2},\ldots,T_{d}$
have bounded generators $A_{1},A_{2},\ldots,A_{d}$ respectively.

This section has three goals.
Working within well understood cases of
    $d=1$
    as well as when $d \geq 1$ and the marginals of $T$ doubly commute,
we first show that
    the equivalence
        \eqcref{it:1:thm:classification:sig:article-dilation-problem-raj-dahya}%
        ~$\Leftrightarrow$~%
        \eqcref{it:2:thm:classification:sig:article-dilation-problem-raj-dahya}
    in the classification theorem (\Cref{thm:classification:sig:article-dilation-problem-raj-dahya}) holds
    without appealing to it.
Within the case of
    doubly commuting
    (\respectively normal) marginal semigroups
we demonstrate secondly that
    the notion of complete dissipativity
    (\respectively complete super dissipativity)
    corresponds to dissipativity
    (\respectively strictly negative spectral bounds).
Finally, working outside these cases,
we shall apply the classification theorem
and demonstrate that the notions of complete (super) dissipativity are not equivalent to the mentioned properties,
and in the process produce concrete examples of $d$-parameter contractive $\Cnought$-semigroups
which are not regularly unitarily dilatable.



\subsection[One-parameter contractive semigroups]{One-parameter contractive semigroups}
\label{sec:examples:simple}

\firstparagraph
Consider the case $d=1$.
Applying the definitions yields
    $B_{T,\emptyset}(t) = S_{T,\emptyset} = \onematrix$
and

    \begin{maths}[mc]{rcl}
        B_{T,\{1\}}(t)
            &= &\onematrix - T_{1}(t)^{\ast}\onematrix T_{1}(t)
            = \onematrix - T_{1}(t)^{\ast}T_{1}(t)
            ~\text{and}\\
        S_{T,\{1\}}
            &= &-\frac{1}{2}\{\Re A_{1},\:\onematrix\} + \imagunit \frac{1}{2}[\Imag A_{1},\:\onematrix]
            = -\Re A_{1}\\
    \end{maths}

\continueparagraph
for all $t\in\realsNonNeg$.

If $T$ is contractive, then clearly
    $B_{T,\emptyset}(t) \geq \zeromatrix$ and $B_{T,\{1\}}(t) \geq \zeromatrix$
for all $t\in\realsNonNeg$, whence $T$ satisfies the Brehmer positivity criterion
and thus by \Cref{thm:brehmer-iff-dilation:sig:article-dilation-problem-raj-dahya} has a regular unitary dilation.
Conversely, if $T$ has a regular unitary dilation, it is necessarily contractive.
Hence $T$ has a regular unitary dilation if and only if it is contractive.

The above expressions also demonstrate that
        the generator of $T$ is completely dissipative
    if and only if
        $\Re A_{1} \leq \zeromatrix$,
    \idest if and only if
        $A_{1}$ is dissipative,
    which, by the Lumer-Phillips form of the Hille-Yosida theorem
    (\cf \Cref{rem:lumer-phillips:sig:article-dilation-problem-raj-dahya})
    in turn holds
    if and only if
        $T$ is contractive.
We also see that
    the generator of $T$ is completely super dissipative
    if and only if
    $\Re A_{1} \leq -\beta\onematrix$ for some $\beta > 0$,
    which holds if and only if
    $T$ is super dissipative.

Hence for $d=1$
    the existence of a regular unitary dilation
    is equivalent to $T$ being contractive,
    which in turn
    is equivalent to the generator of $T$ being completely dissipative.
And one also has that complete super dissipativity and super dissipativity coincide.



\subsection[Doubly commuting generators]{Doubly commuting generators}
\label{sec:examples:doubly-comm}

\firstparagraph
Recall that operators
    $(X_{i})_{i} \subseteq \BoundedOps{\HilbertRaum}$
are said to \highlightTerm{doubly commute}
if $X_{i}$ commutes with $X_{j}$ and $X_{j}^{\ast}$
for all $i,j$ with $i \neq j$.
Suppose that the marginals $T_{1},T_{2},\ldots,T_{d}$
are doubly commuting semigroups,
\idest
    $(T_{1}(t_{1}),T_{2}(t_{2}),\ldots,T_{d}(t_{d}))$
    doubly commute for all $\mathbf{t} = (t_{i})_{i=1}^{d} \in \realsNonNeg^{d}$.
Then it is easy to see that the generators
    $(A_{1},A_{2},\ldots,A_{d})$
doubly commute
(recall that we also assume here that the $A_{i}$ are bounded).
The converse also clearly holds.
Observe further, that double commutativity trivially holds in the case of $d=1$.
Furthermore, by Fuglede's theorem
(\cf \cite[Proposition~4.4.12]{Pedersen1989analysisBook}),
if the commuting marginal semigroups are normal
(or equivalently: the commuting generators are normal),
then they doubly commute
(or equivalently: the generators doubly commute).

Under the assumption of doubly commuting marginals,
since for each
    $K\subseteq\{1,2,\ldots,d\}$ and $t\in\realsNonNeg$
the Brehmer operator
    $B_{T,K}(t)$
is an algebraic expression over
    $\{T_{i}(t),T_{i}(t)^{\ast}\mid i\in K\}$
and the dissipation operator
    $S_{T,K}$
is an algebraic expression over
    $\{A_{i},A_{i}^{\ast}\mid i\in K\}$,
one has that
    $(T_{\alpha}(t),B_{T,K}(t))$ doubly commute
and similarly
    $(A_{\alpha},S_{T,K})$ doubly commute
for all
    $\alpha\in\{1,2,\ldots,d\}\without K$.
Applying the recursive expressions in
    \eqcref{eq:recursion:brehmer:sig:article-dilation-problem-raj-dahya}
    and
    \eqcref{eq:recursion:dissipation:sig:article-dilation-problem-raj-dahya},
one thus obtains
    $B_{T,\emptyset}(t) = S_{T,\emptyset} = \onematrix$
and

    \begin{maths}[mc]{rcl}
        B_{T,K\cup\{\alpha\}}(t)
            &= &B_{T,K}(t) - T_{\alpha}(t)^{\ast}B_{T,K}(t)T_{\alpha}(t)\\
            &= &B_{T,K}(t) \cdot (\onematrix - T_{\alpha}(t)^{\ast}T_{\alpha}(t)),
        ~\text{and}\\
        S_{T,K\cup\{\alpha\}}(t)
            &= &-\frac{1}{2}(A_{\alpha}S_{T,K} + S_{T,K}A_{\alpha}^{\ast})\\
            &= &S_{T,K} \cdot (-\frac{1}{2}(A_{\alpha} + A_{\alpha}^{\ast}))\\
    \end{maths}

\continueparagraph
for all
    $K\subseteq\{1,2,\ldots,d\}$,
    $\alpha\in\{1,2,\ldots,d\}\without K$,
    and
    $t\in\realsNonNeg$.
By induction one thus obtains

    \begin{maths}[mc]{ccc}
        \eqtag[eq:brehmer-diss-operators-doubly-comm:sig:article-dilation-problem-raj-dahya]
        B_{T,K}(t)
            =
                \displaystyle
                \prod_{i \in K}
                    (\onematrix - T_{i}(t)^{\ast}T_{i}(t))
        &\text{and}
        &S_{T,K}
            =
                \displaystyle
                \prod_{i\in K}
                    (-\Re A_{i})\\
    \end{maths}

\continueparagraph
for all $K\subseteq\{1,2,\ldots,d\}$ and $t\in\realsNonNeg$.
Using these expressions yields the following results.

\begin{prop}
\makelabel{prop:doubly-comm-equivalences:sig:article-dilation-problem-raj-dahya}
    If the marginals of $T$ doubly commute (\exempli if $d=1$),
    then the following are equivalent:

    \begin{kompaktenum}{\bfseries (1)}[\rtab]
        \item\punktlabel{1}
            $T$ has a regular unitary dilation.
        \item\punktlabel{2}
            $T$ is contractive.
        \item\punktlabel{3}
            The generators of $T$ are completely dissipative.
        \item\punktlabel{4}
            The generators of $T$ are dissipative.
    \end{kompaktenum}

    \nvraum{1}
\end{prop}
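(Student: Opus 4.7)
The plan is to prove the implications cyclically: \eqcref{it:1:prop:strongly-comm-equivalences:sig:article-dilation-problem-raj-dahya}~$\Rightarrow$~\eqcref{it:2:prop:strongly-comm-equivalences:sig:article-dilation-problem-raj-dahya}~$\Rightarrow$~\eqcref{it:4:prop:strongly-comm-equivalences:sig:article-dilation-problem-raj-dahya}~$\Rightarrow$~\eqcref{it:3:prop:strongly-comm-equivalences:sig:article-dilation-problem-raj-dahya}~$\Rightarrow$~\eqcref{it:1:prop:strongly-comm-equivalences:sig:article-dilation-problem-raj-dahya}. The implication \eqcref{it:1:prop:strongly-comm-equivalences:sig:article-dilation-problem-raj-dahya}~$\Rightarrow$~\eqcref{it:2:prop:strongly-comm-equivalences:sig:article-dilation-problem-raj-dahya} is essentially by definition: if $(U,\HilbertRaum^{\prime},r)$ is a regular unitary dilation, then in particular $T(\mathbf{t}) = r^{\ast}U(\mathbf{t})r$ for $\mathbf{t}\in\realsNonNeg^{d}$, and since $U(\mathbf{t})$ is unitary and $r$ an isometry, one has $\norm{T(\mathbf{t})\xi} \leq \norm{U(\mathbf{t})r\xi} = \norm{\xi}$, so each marginal $T_{i}$ is contractive.

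For \eqcref{it:2:prop:strongly-comm-equivalences:sig:article-dilation-problem-raj-dahya}~$\Rightarrow$~\eqcref{it:4:prop:strongly-comm-equivalences:sig:article-dilation-problem-raj-dahya}, the contractivity of each marginal $T_{i}$ together with the Lumer--Phillips form of the Hille--Yosida theorem (\Cref{rem:lumer-phillips:sig:article-dilation-problem-raj-dahya}) immediately yields that each generator $A_{i}$ is dissipative, \idest $\Re A_{i} \leq \zeromatrix$.

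The implication \eqcref{it:4:prop:strongly-comm-equivalences:sig:article-dilation-problem-raj-dahya}~$\Rightarrow$~\eqcref{it:3:prop:strongly-comm-equivalences:sig:article-dilation-problem-raj-dahya} is where the assumption of \S{}\ref{sec:examples:strongly-comm} (or $d=1$) is crucially used: by \eqcref{eq:brehmer-diss-operators-strongly-comm:sig:article-dilation-problem-raj-dahya} one has $S_{T,K} = \prod_{i \in K}(-\Re A_{i})$, which, since dissipativity of the $A_{i}$ gives $-\Re A_{i} \geq \zeromatrix$, is a product of \emph{commuting} self-adjoint positive operators (normality follows from strong commutativity, and commutativity of the $\Re A_{i}$ then follows). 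Such a product is positive by spectral calculus (\exempli by commuting functional calculus, or by writing $S_{T,K} = \bigl(\prod_{i \in K}(-\Re A_{i})^{1/2}\bigr)^{2}$ and noting that the square roots commute). Hence $S_{T,K} \geq \zeromatrix$ for each $K \subseteq \{1,2,\ldots,d\}$, so the generators of $T$ are completely dissipative. The $d=1$ case requires no assumption since $S_{T,\{1\}} = -\Re A_{1}$ directly.

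Finally, \eqcref{it:3:prop:strongly-comm-equivalences:sig:article-dilation-problem-raj-dahya}~$\Rightarrow$~\eqcref{it:1:prop:strongly-comm-equivalences:sig:article-dilation-problem-raj-dahya} is an immediate application of the main classification result \Cref{thm:classification:sig:article-dilation-problem-raj-dahya}, which requires no strong commutativity hypothesis. The only mildly delicate step is \eqcref{it:4:prop:strongly-comm-equivalences:sig:article-dilation-problem-raj-dahya}~$\Rightarrow$~\eqcref{it:3:prop:strongly-comm-equivalences:sig:article-dilation-problem-raj-dahya}, where one must invoke that products of commuting positive self-adjoint operators are again positive --- a fact that fundamentally relies on (strong) commutativity and which is precisely what fails in the non-strongly commuting examples of \S{}\ref{sec:examples:non-strongly-comm}, thereby explaining why these equivalences break down in general.
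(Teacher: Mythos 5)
Your proposal is correct: every implication in the cycle \eqcref{it:1:prop:strongly-comm-equivalences:sig:article-dilation-problem-raj-dahya}~$\Rightarrow$~\eqcref{it:2:prop:strongly-comm-equivalences:sig:article-dilation-problem-raj-dahya}~$\Rightarrow$~\eqcref{it:4:prop:strongly-comm-equivalences:sig:article-dilation-problem-raj-dahya}~$\Rightarrow$~\eqcref{it:3:prop:strongly-comm-equivalences:sig:article-dilation-problem-raj-dahya}~$\Rightarrow$~\eqcref{it:1:prop:strongly-comm-equivalences:sig:article-dilation-problem-raj-dahya} is sound, and there is no logical circularity since \Cref{thm:classification:sig:article-dilation-problem-raj-dahya} is established earlier and independently of this proposition. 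However, you close the loop differently from the paper, and the difference matters for the purpose of the section. You return to \eqcref{it:1:prop:strongly-comm-equivalences:sig:article-dilation-problem-raj-dahya} via \eqcref{it:3:prop:strongly-comm-equivalences:sig:article-dilation-problem-raj-dahya}~$\Rightarrow$~\eqcref{it:1:prop:strongly-comm-equivalences:sig:article-dilation-problem-raj-dahya} by invoking the classification theorem; the paper instead proves \eqcref{it:2:prop:strongly-comm-equivalences:sig:article-dilation-problem-raj-dahya}~$\Rightarrow$~\eqcref{it:1:prop:strongly-comm-equivalences:sig:article-dilation-problem-raj-dahya} directly, using the product formula \eqcref{eq:brehmer-diss-operators-strongly-comm:sig:article-dilation-problem-raj-dahya} for the \emph{Brehmer} operators $B_{T,K}(t) = \prod_{i\in K}(\onematrix - T_{i}(t)^{\ast}T_{i}(t))$ (the exact analogue of your argument for $S_{T,K}$) and then Ptak's criterion, \Cref{thm:brehmer-iff-dilation:sig:article-dilation-problem-raj-dahya}. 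The paper also verifies \eqcref{it:3:prop:strongly-comm-equivalences:sig:article-dilation-problem-raj-dahya}~$\Rightarrow$~\eqcref{it:4:prop:strongly-comm-equivalences:sig:article-dilation-problem-raj-dahya} separately (it is the general fact from \S{}\ref{sec:definitions:dissipation} that complete dissipativity implies dissipativity), so that the equivalence of \eqcref{it:1:prop:strongly-comm-equivalences:sig:article-dilation-problem-raj-dahya} and \eqcref{it:3:prop:strongly-comm-equivalences:sig:article-dilation-problem-raj-dahya} is obtained \emph{without} appealing to \Cref{thm:classification:sig:article-dilation-problem-raj-dahya} --- which is precisely the announced point of \S{}\ref{sec:examples}: an independent confirmation of the classification theorem in the well-understood strongly commuting case. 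Your route is shorter, but by leaning on the main theorem it forfeits that independent-verification role; as a standalone proof of the proposition it is nonetheless perfectly valid, and your identification of the product-of-commuting-positives step as the one that genuinely needs strong commutativity (and fails in \S{}\ref{sec:examples:non-strongly-comm}) is exactly right.
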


    \begin{proof}
        The implication \hinRichtung{1}{2} is clear.
        And by the discussion in \S{}\ref{sec:definitions:dissipation},
        we already know that complete dissipativity clearly generalises dissipativity,
        \idest \hinRichtung{3}{4} holds.

        \paragraph{\hinherRichtung{2}{4}:}
        By the correspondence in the introduction,
            the $d$-parameter $\Cnought$-semigroup $T$ is contractive
        if and only if
            each of the marginals $T_{i}$ is contractive,
        which, by the Lumer-Phillips form of the Hille-Yosida theorem
        (\cf \Cref{rem:lumer-phillips:sig:article-dilation-problem-raj-dahya}),
        in turn holds
        if and only if
            each of the generators $A_{i}$ is dissipative.

        \paragraph{\hinRichtung{2}{1}:}
        If $T$ is contractive, or equivalently (by the correspondence mentioned in the introduction) each $T_{i}$ is contractive,
        then
            $\onematrix - T_{i}(t)^{\ast}T_{i}(t) \geq \zeromatrix$ for all $i\in\{1,2,\ldots,d\}$.
        Since by the double commutativity assumption
            $\{\onematrix - T_{i}(t)^{\ast}T_{i}(t) \mid i\in\{1,2,\ldots,d\}\}$
        is a set of commuting positive operators,
        it follows that any product of these is positive.\footref{ft:product-positive:sig:article-dilation-problem-raj-dahya}
        By \eqcref{eq:brehmer-diss-operators-doubly-comm:sig:article-dilation-problem-raj-dahya}
        it follows that
            $B_{T,K}(t) \geq \zeromatrix$
            for all $t\in\realsNonNeg$
            and all $K\subseteq\{1,2,\ldots,d\}$.
        So $T$ satisfies the Brehmer positivity criterion
        and by \Cref{thm:brehmer-iff-dilation:sig:article-dilation-problem-raj-dahya}
        it has a regular unitary dilation.

        \paragraph{\hinRichtung{4}{3}:}
        If each of the $A_{i}$ are dissipative, then $-\Re A_{i} \geq \zeromatrix$ for all $i\in\{1,2,\ldots,d\}$.
        Since by the double commutativity assumption
            $\{-\Re A_{i} \mid i\in\{1,2,\ldots,d\}\}$
        is a set of commuting positive operators,
        it follows that any product of these is positive.\footref{ft:product-positive:sig:article-dilation-problem-raj-dahya}
        By \eqcref{eq:brehmer-diss-operators-doubly-comm:sig:article-dilation-problem-raj-dahya}
        it follows that
            $S_{T,K} \geq \zeromatrix$
            for all $K\subseteq\{1,2,\ldots,d\}$.
        So the generators of $T$ are completely dissipative.
    \end{proof}

\begin{prop}
\makelabel{prop:doubly-comm-equivalences-strict:sig:article-dilation-problem-raj-dahya}
    If the marginals of $T$ are normal,
    then the following are equivalent:

    \begin{kompaktenum}{\bfseries (1)}[\rtab]
        \item\punktlabel{1}
            The generators of $T$ have strictly negative spectral bounds.
        \item\punktlabel{2}
            The generators of $T$ are completely super dissipative.
        \item\punktlabel{3}
            The generators of $T$ are super dissipative.
    \end{kompaktenum}

    \continueparagraph
    Note that the equivalence of \punktcref{2} and \punktcref{3}
    holds under the weaker assumption of double commutativity.
    In particular, if $d=1$, \hinherRichtung{2}{3} holds without any assumptions.
\end{prop}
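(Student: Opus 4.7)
The plan is to reduce everything to the product formula $S_{T,K} = \prod_{i \in K}(-\Re A_i)$ from \eqcref{eq:brehmer-diss-operators-strongly-comm:sig:article-dilation-problem-raj-dahya} together with standard spectral theory of normal operators. For the $d=1$ clause (without strong commutativity) the argument is immediate: by \S{}\ref{sec:examples:simple}, $S_{T,\{1\}} = -\Re A_1$ and $S_{T,\emptyset} = \onematrix$, so $\beta_T > 0$ precisely when $-\Re A_1 \geq \omega\onematrix$ for some $\omega > 0$, which is the definition of super dissipativity of $A_1$. This disposes of the final assertion.

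For the main statement assume $d \geq 1$ and that the marginals strongly commute; then the set $\{A_i,\,A_i^\ast \mid i\in\{1,\ldots,d\}\}$ is jointly commuting, and in particular each $A_i$ is normal. The equivalence of (1) and (3) follows from the standard fact that for a normal operator $A$ one has $\opSpectrum{\Re A} = \{\Re \lambda \mid \lambda \in \opSpectrum{A}\}$, whence $\Re A_i \leq -\omega_i\onematrix$ for some $\omega_i > 0$ is equivalent to the spectral bound of $A_i$ being strictly negative; applying this coordinatewise gives (1) $\Leftrightarrow$ (3). The implication (2) $\Rightarrow$ (3) is likewise immediate: since $S_{T,\{i\}} = -\Re A_i$ (as already observed in \S{}\ref{sec:definitions:dissipation}), the uniform bound $S_{T,K} \geq \beta_T\onematrix$ with $\beta_T > 0$, specialised to $K=\{i\}$, yields $-\Re A_i \geq \beta_T\onematrix$ for every $i$, i.e.\ super dissipativity of each $A_i$ with uniform constant $\beta_T$.

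The only implication requiring a further argument is (3) $\Rightarrow$ (2). Assuming each $-\Re A_i \geq \omega_i\onematrix$ with $\omega_i > 0$, the family $\{-\Re A_i \mid i \in \{1,\ldots,d\}\}$ consists of mutually commuting positive self-adjoint operators (by strong commutativity of the generators). Applying the continuous functional calculus over the commutative unital $C^\ast$-algebra they generate — equivalently, jointly diagonalising the $A_i$ as multiplication operators via the spectral theorem for commuting normal tuples — one concludes $\prod_{i \in K}(-\Re A_i) \geq (\prod_{i \in K}\omega_i)\onematrix$ for every non-empty $K \subseteq \{1,\ldots,d\}$. Combined with \eqcref{eq:brehmer-diss-operators-strongly-comm:sig:article-dilation-problem-raj-dahya}, this yields $S_{T,K} \geq (\prod_{i \in K}\omega_i)\onematrix$ for non-empty $K$, while trivially $S_{T,\emptyset} = \onematrix$. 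Setting $\beta \colonequals \min\{1,\,\prod_{i \in K}\omega_i \mid \emptyset \neq K \subseteq \{1,\ldots,d\}\} > 0$ then gives $\beta_T \geq \beta > 0$. I do not anticipate any serious obstacle here: strong commutativity collapses the general recursion for $S_{T,K}$ into a single product, after which strict positive lower bounds simply multiply in the usual way.
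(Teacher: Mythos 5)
Your proposal is correct and uses essentially the same ingredients as the paper's proof: the product formula $S_{T,K}=\prod_{i\in K}(-\Re A_{i})$ under strong commutativity, the spectral mapping $\opSpectrum{\Re A_{i}}=\Re(\opSpectrum{A_{i}})$ for the normal generators, and the fact that a product of commuting strictly positive operators is strictly positive. The only difference is the arrangement of implications (you prove \punktcref{1}~$\Leftrightarrow$~\punktcref{3} and \punktcref{2}~$\Leftrightarrow$~\punktcref{3}, whereas the paper runs the cycle \punktcref{1}~$\Rightarrow$~\punktcref{2}~$\Rightarrow$~\punktcref{3}~$\Rightarrow$~\punktcref{1}), which is immaterial.
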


\footnotetext[ft:product-positive:sig:article-dilation-problem-raj-dahya]{%
    Let $\cal{A}$ be a unital $C^{\ast}$-algebra
    and $a_{1},a_{2},\ldots,a_{n}\in\cal{A}$ be commuting positive elements.
    Then the elements the $C^{\ast}$-subalgebra
    generated by these elements is commutative.
    In particular $\sqrt{a_{1}},\sqrt{a_{2}},\ldots,\sqrt{a_{n}}$
    are commutative positive elements of $\cal{A}$,
    whence
        $%
            \prod_{i=1}^{n}a_{i}
            = \prod_{i=1}^{n}(\sqrt{a}_{i}^{\ast}\sqrt{a}_{i})
            = \prod_{i=1}^{n}\sqrt{a}_{i}^{\ast}
            \prod_{j=1}^{n}\sqrt{a}_{j}
            = (\prod_{i=1}^{n}\sqrt{a}_{i})^{\ast}
            \prod_{j=1}^{n}\sqrt{a}_{j}
        $,
    which is positive.
    Note that positive elements are strictly positive
    if and only if they are invertible (\cf \cite[Proposition~3.2.12]{Pedersen1989analysisBook}).
    Since the product of invertible (\respectively commuting positive) elements
    is invertible (\respectively positive),
    it follows that the product of commuting strictly positive elements
    is strictly positive.%
}
    \begin{proof}
        First note that normality of the commuting marginal semigroups
        implies that the marginals double commute (see above).

        \paragraph{\hinherRichtung{1}{3}:}
        Let $i\in\{1,2,\ldots,d\}$.
        The generator, $A_{i}$, of $T_{i}$ has a strictly negative spectral bound,
        \idest
            $
                \opSpectrum{A_{i}}
                \subseteq
                \{z\in\complex \mid \Re z \leq -\omega\}
            $
        for some $\omega\in\realsPos$,
        if and only if (by the assumption of normality)
            $\Re A_{i} \leq -\omega\onematrix$
        for some $\omega\in\realsPos$,
        which in turn holds if and only if $A_{i}$ is super dissipative
        (\cf \S{}\ref{sec:definitions:dissipation}).

        \paragraph{\hinRichtung{2}{3}:}
        Complete super dissipativity clearly generalises super dissipativity
        (\cf \S{}\ref{sec:definitions:dissipation}).

        \paragraph{\hinRichtung{3}{2}:}
        If the generators of $T$ are super dissipative,
        then for some $\boldsymbol{\omega}\in\realsPos^{d}$
        we have
            $-\Re A_{i} \geq \omega_{i}\onematrix$
        for each $i\in\{1,2,\ldots,d\}$.
        Since by double commutativity
            $\{-\Re A_{i} \mid i\in\{1,2,\ldots,d\}\}$
        is a set of commuting strictly positive operators,
        any product of these is strictly positive.\footref{ft:product-positive:sig:article-dilation-problem-raj-dahya}
        By \eqcref{eq:brehmer-diss-operators-doubly-comm:sig:article-dilation-problem-raj-dahya}
        it follows that
            $S_{T,K} > 0$
        for each $K\subseteq\{1,2,\ldots,d\}$.
        (More precisely, $S_{T,K} \geq \omega_{K}\onematrix$
        for each $K\subseteq\{1,2,\ldots,d\}$.)
        It follows that $\beta_{T} > 0$
        and the generators of $T$ are completely super dissipative.
    \end{proof}

Observe that
    \eqcref{it:1:prop:doubly-comm-equivalences:sig:article-dilation-problem-raj-dahya}%
    ~$\Leftrightarrow$~%
    \eqcref{it:3:prop:doubly-comm-equivalences:sig:article-dilation-problem-raj-dahya}
    in
    \Cref{prop:doubly-comm-equivalences:sig:article-dilation-problem-raj-dahya}
agrees with
    \eqcref{it:1:thm:classification:sig:article-dilation-problem-raj-dahya}%
    ~$\Leftrightarrow$~%
    \eqcref{it:2:thm:classification:sig:article-dilation-problem-raj-dahya}
in the classification theorem (\Cref{thm:classification:sig:article-dilation-problem-raj-dahya}).
Moreover,
    \Cref{prop:doubly-comm-equivalences:sig:article-dilation-problem-raj-dahya}
    and
    \Cref{prop:doubly-comm-equivalences-strict:sig:article-dilation-problem-raj-dahya}
demonstrate under the assumption of double commutativity
the equivalence of
    complete dissipativity of the generators of $T$
    and
    the dissipativity of the individual generators,
as well as the equivalence of
    complete super dissipativity,
    super dissipativity,
    and
    (under the assumption of normality)
    strictly negative spectral bounds.

Further note that if $T$ is a $d$-parameter unitary $\Cnought$-semigroup,
then its generators are purely imaginary,
\idest $\Re A_{i} = \zeromatrix$ for each $i\in\{1,2,\ldots,d\}$.
By \eqcref{eq:brehmer-diss-operators-doubly-comm:sig:article-dilation-problem-raj-dahya}
it follows that
    $S_{T,K}=\zeromatrix$ for all non-empty $K\subseteq\{1,2,\ldots,d\}$,
whence $\beta_{T}=0$.
So the generators are completely dissipative but not completely super dissipative.
Hence the latter property is strictly stronger than the former.



\subsection[Non-doubly commuting generators]{Non-doubly commuting generators}
\label{sec:examples:non-doubly-comm}

\firstparagraph
Suppose that $T$ is contractive and has bounded generators,
which furthermore have strictly negative spectral bounds.
Then, if the marginals of $T$ doubly commute,
by \Cref{prop:doubly-comm-equivalences:sig:article-dilation-problem-raj-dahya}
    $T$ is completely dissipative
    and thus by the classification theorem (\Cref{thm:classification:sig:article-dilation-problem-raj-dahya})
    has a regular unitary dilation.
We shall show that the first three assumptions do not together suffice
to yield regular unitary dilatability.
In order to achieve this, we clearly need to consider non-doubly commuting semigroups.
A typical place to look is \emph{upper} (or lower) \emph{triangular matrices}.

\begin{prop}
\makelabel{prop:non-doubly-comm-example:sig:article-dilation-problem-raj-dahya}
    Let $\HilbertRaum$ be a Hilbert space with $\dim(\HilbertRaum) \geq 2$,
    and let $d\in\naturals$ with $d \geq 2$.
    Then there exists a
        $d$-parameter contractive $\Cnought$-semigroup $T$ over $\HilbertRaum$
        with bounded generators
            which have strictly negative spectral bounds,
    such that $T$ has no regular unitary dilation.
\end{prop}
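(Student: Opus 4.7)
The plan is to construct an explicit example in the base case $d = 2$, $\dim\HilbertRaum = 2$ using a single non-normal Jordan block, and then to reduce the general case to it by direct summing with scalar multiples of the identity. Concretely, fix any $\omega\in\realsPos$, set $\HilbertRaum = \complex^{2}$, and take
\[
    A_{1} = A_{2} = A \colonequals \begin{pmatrix} -\omega & 2\omega \\ 0 & -\omega \end{pmatrix}.
\]
These commute trivially, both have $\opSpectrum{A} = \{-\omega\}$ (so strictly negative spectral bound), and $\Re A$ has eigenvalues $0$ and $-2\omega$, so $A$ is dissipative. Hence each marginal $T_{i} = (e^{tA})_{t\in\realsNonNeg}$ is contractive, and the $2$-parameter semigroup $T(s,t) = e^{(s+t)A}$ is contractive as well. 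Since $AA^{\ast} \neq A^{\ast}A$, $A$ is not normal, so the marginals of $T$ do not strongly commute (\cf \S{}\ref{sec:examples:strongly-comm}), which is essential: by \Cref{prop:strongly-comm-equivalences:sig:article-dilation-problem-raj-dahya}, the example we seek cannot live in the strongly commuting setting.

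Next I would invoke the recursion \eqcref{eq:recursion:dissipation:sig:article-dilation-problem-raj-dahya} to compute
\[
    S_{T,\{1,2\}}
    = -\tfrac{1}{2}(A^{\ast}S_{T,\{1\}} + S_{T,\{1\}}A)
    = \tfrac{1}{2}(A^{\ast}\,\Re A + \Re A\cdot A).
\]
Writing out the $2\times 2$ matrix product yields
\[
    S_{T,\{1,2\}} = \begin{pmatrix} \omega^{2} & -2\omega^{2} \\ -2\omega^{2} & 3\omega^{2} \end{pmatrix},
\]
whose determinant is $3\omega^{4} - 4\omega^{4} = -\omega^{4} < 0$. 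Thus $S_{T,\{1,2\}}$ has the strictly negative eigenvalue $(2-\sqrt{5})\omega^{2}$, so $\beta_{T} < 0$; the generators of $T$ are not completely dissipative, and by \Cref{thm:classification:sig:article-dilation-problem-raj-dahya} the semigroup $T$ admits no regular unitary dilation.

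For general $d\geq 2$ and arbitrary Hilbert space $\HilbertRaum$ with $\dim\HilbertRaum \geq 2$, I would extend this example by shifts that do not affect $S_{T,\{1,2\}}$. First, decompose $\HilbertRaum = \complex^{2} \oplus \HilbertRaum_{0}$ and replace each $A_{i}$ (for $i\in\{1,2\}$) by $A_{i} \oplus (-\omega\onematrix_{\HilbertRaum_{0}})$; then for $i\in\{3,\ldots,d\}$ adjoin $A_{i} \colonequals -\omega\onematrix_{\HilbertRaum}$. All generators remain bounded and commuting, all marginals remain contractive, and every generator has spectrum $\{-\omega\}\subseteq\realsPos\cdot(-1)$. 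Since $S_{T,K}$ only depends on $\{A_{i} : i \in K\}$ (from \Cref{defn:dissipation-operators-and-conditions:sig:article-dilation-problem-raj-dahya}), the extended dissipation operator $S_{T,\{1,2\}}$ is the direct sum of the indefinite $2\times 2$ matrix above with a positive multiple of the identity on $\HilbertRaum_{0}$, hence is still not positive; complete dissipativity still fails, and \Cref{thm:classification:sig:article-dilation-problem-raj-dahya} again denies $T$ a regular unitary dilation. There is no real obstacle here: the only subtlety is identifying commuting bounded generators with all the positive features (dissipativity, strictly negative spectra) whose second-order dissipation operator is indefinite, and the single $2\times 2$ Jordan block provides the cleanest such witness.
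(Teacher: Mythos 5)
Your construction is correct: the matrix computations check out ($S_{T,\{1,2\}}=\frac{1}{4}(A^{2}+(A^{\ast})^{2}+2A^{\ast}A)$ indeed equals the displayed Hermitian matrix with determinant $-\omega^{4}$), the padding by $-\omega\onematrix$ preserves boundedness, commutativity, dissipativity and the spectrum $\{-\omega\}$, and the block decomposition shows $S_{T,\{1,2\}}$ retains the negative eigenvalue $(2-\sqrt{5})\omega^{2}$, so \Cref{thm:classification:sig:article-dilation-problem-raj-dahya} rules out a regular unitary dilation. The overall strategy is the same as the paper's (non-normal generators of the form $-\omega\onematrix$ plus a strictly upper-triangular part, failure of complete dissipativity, then the classification theorem), but your instantiation differs in the details: you use a single $2\times 2$ Jordan-type block with $A_{1}=A_{2}$, let the failure occur already at the second-order dissipation operator, and reduce general $d$ and general $\HilbertRaum$ by adjoining scalar generators and direct-summing with $-\omega\onematrix$, whereas the paper builds the example intrinsically on any $\HilbertRaum$ with $\dim(\HilbertRaum)\geq 2$ using $d$ distinct nilpotent parts $-2\alpha V_{i}$ with isometries $V_{i}$ and $\abs{\alpha}$ tuned in $(1/\sqrt{d},1/\sqrt{d-1})$. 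What the paper's extra effort buys is the sharper phenomenon recorded in its subsequent remark — namely $S_{T,K}\geq\zeromatrix$ for all $\card{K}<d$ while positivity fails only at $\card{K}=d$, showing the higher-order dissipation operators are not redundant — which your example (failing already at order $2$, with the remaining parameters inert) does not exhibit; for the proposition as stated, however, your simpler witness is entirely sufficient.
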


    \begin{proof}
        We first construct a space of operators over $\HilbertRaum$,
        from which we shall pick our generators.
        Since $\dim(\HilbertRaum) \geq 2$
        one can find orthonormal closed subspaces
            $\HilbertRaum_{1},\HilbertRaum_{2}\subseteq\HilbertRaum$
        with
            $0 < \dim(\HilbertRaum_{2}) \leq \dim(\HilbertRaum_{1})$
        and $\HilbertRaum=\HilbertRaum_{1}\bigoplus\HilbertRaum_{2}$.
        Consider the following collection of operators

            \begin{maths}[mc]{c}
                \AlgebraUpperTr
                    =
                        \displaystyle
                        \bigcup_{c\in\complex^{\times}}
                            \underbrace{
                            \{
                                c \cdot \onematrix
                                +
                                \begin{smatrix}
                                \zeromatrix &D\\
                                \zeromatrix &\zeromatrix\\
                                \end{smatrix}
                                \mid
                                D \in \BoundedOps{\HilbertRaum_{2}}{\HilbertRaum_{1}}
                            \}
                        }_{\equalscolon \AlgebraUpperTr_{c}}.\\
            \end{maths}

        \continueparagraph
        and observe for
            $D, D_{1}, D_{2} \in \BoundedOps{\HilbertRaum_{2}}{\HilbertRaum_{1}}$
            and
            $E\in\BoundedOps{\HilbertRaum_{2}}$
        the following algebraic relations:

            \begin{maths}[mc]{c}
                \eqtag[eq:0:\beweislabel]
                \begin{smatrix}
                    \zeromatrix &D_{1}\\
                    \zeromatrix &\zeromatrix\\
                \end{smatrix}
                \begin{smatrix}
                    \zeromatrix &D_{2}\\
                    \zeromatrix &\zeromatrix\\
                \end{smatrix}
                =
                    \begin{smatrix}
                        \zeromatrix &D_{1}\\
                        \zeromatrix &\zeromatrix\\
                    \end{smatrix}^{\ast}
                    \begin{smatrix}
                        \zeromatrix &D_{2}\\
                        \zeromatrix &\zeromatrix\\
                    \end{smatrix}^{\ast}
                =
                    \begin{smatrix}
                        \zeromatrix &\zeromatrix\\
                        \zeromatrix &E\\
                    \end{smatrix}
                    \begin{smatrix}
                        \zeromatrix &D\\
                        \zeromatrix &\zeromatrix\\
                    \end{smatrix}
                =
                    \begin{smatrix}
                        \zeromatrix &D\\
                        \zeromatrix &\zeromatrix\\
                    \end{smatrix}^{\ast}
                    \begin{smatrix}
                        \zeromatrix &\zeromatrix\\
                        \zeromatrix &E\\
                    \end{smatrix}
                = \zeromatrix\\
            \end{maths}

        \continueparagraph
        from which one can derive

            \begin{maths}[mc]{rcl}
            \eqtag[eq:1:\beweislabel]
                \Big(
                    \onematrix
                    + \begin{smatrix}
                    \zeromatrix &D\\
                    \zeromatrix &\zeromatrix\\
                    \end{smatrix}
                \Big)^{\ast}
                    &=
                    &\onematrix
                    + \begin{smatrix}
                        \zeromatrix &\zeromatrix\\
                        D^{\ast} &\zeromatrix\\
                    \end{smatrix},\\
                \Big(
                    \onematrix
                    + \begin{smatrix}
                    \zeromatrix &D_{1}\\
                    \zeromatrix &\zeromatrix\\
                    \end{smatrix}
                \Big)
                \cdot
                \Big(
                    \onematrix
                    + \begin{smatrix}
                    \zeromatrix &D_{2}\\
                    \zeromatrix &\zeromatrix\\
                    \end{smatrix}
                \Big)
                    &=
                    &\onematrix
                    +
                    \begin{smatrix}
                        \zeromatrix &D_{1} + D_{2}\\
                        \zeromatrix &\zeromatrix\\
                    \end{smatrix},\\
                \Big(
                    \onematrix
                    + \begin{smatrix}
                    \zeromatrix &D_{1}\\
                    \zeromatrix &\zeromatrix\\
                    \end{smatrix}
                \Big)^{\ast}
                \cdot
                \Big(
                    \onematrix
                    + \begin{smatrix}
                    \zeromatrix &D_{2}\\
                    \zeromatrix &\zeromatrix\\
                    \end{smatrix}
                \Big)
                    &=
                    &\onematrix
                    +
                    \begin{smatrix}
                        \zeromatrix &D_{2}\\
                        D_{1}^{\ast} &D_{1}^{\ast}D_{2}\\
                    \end{smatrix},\\
                \Big(
                    \onematrix
                    + \begin{smatrix}
                    \zeromatrix &D_{2}\\
                    \zeromatrix &\zeromatrix\\
                    \end{smatrix}
                \Big)
                \cdot
                \Big(
                    \onematrix
                    + \begin{smatrix}
                    \zeromatrix &D_{1}\\
                    \zeromatrix &\zeromatrix\\
                    \end{smatrix}
                \Big)^{\ast}
                    &=
                    &\onematrix
                    +
                    \begin{smatrix}
                        D_{2}D_{1}^{\ast} &D_{2}\\
                        D_{1}^{\ast} &\zeromatrix\\
                    \end{smatrix},\\
            \end{maths}

        \continueparagraph
        and thus

            \begin{maths}[mc]{rcl}
            \eqtag[eq:2:\beweislabel]
                \Big(
                    \onematrix
                    + \begin{smatrix}
                    \zeromatrix &D\\
                    \zeromatrix &\zeromatrix\\
                    \end{smatrix}
                \Big)
                \cdot
                \Big(
                    \onematrix
                    + \begin{smatrix}
                    \zeromatrix &-D\\
                    \zeromatrix &\zeromatrix\\
                    \end{smatrix}
                \Big)
                    &=
                    &\onematrix
                    +
                    \begin{smatrix}
                        \zeromatrix &D + (-D)\\
                        \zeromatrix &\zeromatrix\\
                    \end{smatrix}
                    = \onematrix.\\
            \end{maths}

        By \eqcref{eq:1:\beweislabel} and \eqcref{eq:2:\beweislabel},
        $\AlgebraUpperTr$ forms a commutative group under multiplication
        and the elements in $\AlgebraUpperTr$ in general are not normal
        nor do they doubly commute.

        \paragraph{Properties of the generators:}
        The spectra of the elements are straightforward to compute:
        For $c\in\complex^{\times}$ and
        ${%
            A \colonequals
            c \cdot \onematrix
            + \begin{smatrix}
                \zeromatrix &D\\
                \zeromatrix &\zeromatrix\\
                \end{smatrix}
            \in \AlgebraUpperTr_{c}
        }$
        and $\lambda \in \complex \without \{c\}$
        one clearly has
        $
            \lambda \cdot \onematrix - A
            \in \AlgebraUpperTr_{\lambda - c}
            \subseteq \AlgebraUpperTr
        $,
        and thus $\lambda \cdot \onematrix - A$ is invertible.
        So $\opSpectrum{A} \subseteq \{c\}$.
        Since the spectrum of bounded operators is non-empty,
        it follows that $\opSpectrum{A} = \{c\}$
        for all $A \in \AlgebraUpperTr_{c}$
        and all $c\in\complex^{\times}$.
        In particular,
        $\opSpectrum{A} = \{-1\} \subseteq\{z\in\complex \mid \Re z \leq -1\}$,
        for all $A \in \AlgebraUpperTr_{-1}$,
        whence each operator in $\AlgebraUpperTr_{-1}$
        generates a $\Cnought$-semigroup
        whose generator has a strictly negative spectral bound.

        For
        ${%
            A \colonequals
            -1 \cdot \onematrix
            + \begin{smatrix}
                \zeromatrix &D\\
                \zeromatrix &\zeromatrix\\
                \end{smatrix}
            \in \AlgebraUpperTr_{-1}
        }$
        one has

            \begin{maths}[mc]{rcl}
                \Re \BRAKET{A(x\oplus y)}{x\oplus y}
                    &=
                        &-\norm{x\oplus y}^{2} + \Re \BRAKET{Dy}{x}\\
                    &\leq
                        &-\norm{x}^{2} - \norm{y}^{2}
                        + \norm{D}\norm{x}\norm{y}\\
                    &=
                        &-(\norm{x} - \norm{y})^{2}
                            -(2 - \norm{D})\norm{x}\norm{y}\\
            \end{maths}

        \continueparagraph
        for $x\in\HilbertRaum_{1}$ und $y\in\HilbertRaum_{2}$.
        Thus $A$ is dissipative, provided $\norm{D}\leq 2.$

        This establishes a broad class of possibilities for generating
        our $d$-parameter $\Cnought$-semigroups.
        Let
            $A_{1},A_{2},\ldots,A_{d} \in \AlgebraUpperTr_{-1}$
        with
            ${%
            A_{i}
            = -\onematrix
            + \begin{smatrix}
                \zeromatrix &-2D_{i}\\
                \zeromatrix &\zeromatrix\\
            \end{smatrix}
            }$
        for some contractive
            $D_{i} \in \BoundedOps{\HilbertRaum_{2}}{\HilbertRaum_{1}}$
        and let $T$ be the $d$-parameter $\Cnought$-semigroup whose marginals have
        the commuting operators $A_{1},A_{2},\ldots,A_{d}$ as generators.
        By the above, each $A_{i}$ is bounded, dissipative,
        and has a strictly negative spectral bound.
        By dissipativity and the Lumer-Phillips form of the Hille-Yosida theorem
        (\cf \Cref{rem:lumer-phillips:sig:article-dilation-problem-raj-dahya}),
        the marginals of $T$ and thus $T$ itself are contractive.
        By applying the algebraic relations in \eqcref{eq:1:\beweislabel}
        one can readily verify that $T$ is explicitly given by:

            \begin{maths}[mc]{c}
            \eqtag[eq:form-of-non-doubly-comm-semigroup:sig:article-dilation-problem-raj-dahya]
                T(\mathbf{t})
                    =
                        e^{-\sum_{i=1}^{d}t_{i}}
                        \cdot
                        \Big(
                            \onematrix
                            + \displaystyle
                            \sum_{i=1}^{d}
                                t_{i}(\onematrix + A_{i})
                        \Big)\\
            \end{maths}

        \continueparagraph
        for each $\mathbf{t}=(t_{i})_{i=1}^{d}\in\realsNonNeg^{d}$.

        \paragraph{Complete dissipativity:}
        To prove that $T$ has no regular unitary dilation,
        by the classification theorem (\Cref{thm:classification:sig:article-dilation-problem-raj-dahya})
        it suffices to show that the generators of $T$ are not completely dissipative.
        To assist with the computation of
        the dissipation operators associated to the generators of $T$,
        define the following aggregates

            \begin{maths}[mc]{cqc}
                \brkt{D}_{K}
                \colonequals
                    \displaystyle
                    \sum_{i\in K}D_{i},
                &\brkt{\abs{D}^{2}}_{K}
                \colonequals
                    \displaystyle
                    \sum_{i\in K}D_{i}^{\ast}D_{i}\\
            \end{maths}

        \continueparagraph
        for each $K \subseteq \{1,2,\ldots,d\}$.
        We now claim that the dissipativity operators are given by

            \begin{maths}[mc]{rcl}
            \eqtag[eq:form-of-dissipation-operators:sig:article-dilation-problem-raj-dahya]
                S_{T,K}
                    &=
                        &\Big(
                            \onematrix
                            +
                            \begin{smatrix}
                                \zeromatrix &\brkt{D}_{K}\\
                                \zeromatrix &\zeromatrix\\
                            \end{smatrix}
                        \Big)^{\ast}
                        \Big(
                            \onematrix
                            -
                            \underbrace{
                            \begin{smatrix}
                                \zeromatrix &\zeromatrix\\
                                \zeromatrix &\brkt{\abs{D}^{2}}_{K}\\
                            \end{smatrix}
                            }_{\equalscolon V_{K}}
                        \Big)
                        \underbrace{
                            \Big(
                                \onematrix
                                + \begin{smatrix}
                                    \zeromatrix &\brkt{D}_{K}\\
                                    \zeromatrix &\zeromatrix\\
                                \end{smatrix}
                            \Big)
                        }_{\equalscolon C_{K}}\\
                    &= &C_{K}^{\ast}C_{K} - C_{K}^{\ast}V_{K}C_{K}\\
                    &= &C_{K}^{\ast}C_{K} - V_{K}\\
            \end{maths}

        \continueparagraph
        holds for all $K \subseteq \{1,2,\ldots,d\}$.
        (%
            Note that the final equality holds by applying
            the identities in \eqcref{eq:0:\beweislabel} to
                $\brkt{D}_{K}$
                and
                $\brkt{\abs{D}^{2}}_{K}$.%
        )
        We prove this by induction over the size of $K$
        using the algebraic relations in
            \eqcref{eq:1:\beweislabel}
        and the recursions in
            \eqcref{eq:recursion:dissipation:sig:article-dilation-problem-raj-dahya}.
        For $K=\emptyset$ one has
            $%
                C_{\emptyset}^{\ast}C_{\emptyset} - V_{\emptyset}
                = (\onematrix + \zeromatrix)^{\ast}(\onematrix + \zeromatrix) - \zeromatrix
                = \onematrix
                = S_{T,\emptyset}
            $,
        so \eqcref{eq:form-of-dissipation-operators:sig:article-dilation-problem-raj-dahya} holds.
        And if \eqcref{eq:form-of-dissipation-operators:sig:article-dilation-problem-raj-dahya}
        holds for some $K \subseteq \{1,2,\ldots,d\}$,
        then for $\alpha \in \{1,2,\ldots,d\} \setminus K$

            \begin{longmaths}[mc]{RCL}
                S_{T,K \cup \{\alpha\}}
                    &\eqcrefoverset{eq:recursion:dissipation:sig:article-dilation-problem-raj-dahya}{=}
                        &\frac{1}{2}(-A_{\alpha}^{\ast})S_{T,K} + \frac{1}{2}S_{T,K}(-A_{\alpha})\\
                    &=
                        &\frac{1}{2}
                        \Big(
                            \onematrix
                            +
                            2
                            \begin{smatrix}
                                \zeromatrix &D_{\alpha}\\
                                \zeromatrix &\zeromatrix\\
                            \end{smatrix}^{\ast}
                        \Big)
                        S_{T,K}
                        +
                        \frac{1}{2}
                        S_{T,K}
                        \Big(
                            \onematrix
                            +
                            2
                            \begin{smatrix}
                                \zeromatrix &D_{\alpha}\\
                                \zeromatrix &\zeromatrix\\
                            \end{smatrix}
                        \Big)\\
                    &\textoverset{ind.}{=}
                        &(C_{K}^{\ast}C_{K} - V_{K})
                        +
                        \begin{smatrix}
                            \zeromatrix &D_{\alpha}\\
                            \zeromatrix &\zeromatrix\\
                        \end{smatrix}^{\ast}
                        (C_{K}^{\ast}C_{K} - V_{K})
                        +
                        (C_{K}^{\ast}C_{K} - V_{K})
                        \begin{smatrix}
                            \zeromatrix &D_{\alpha}\\
                            \zeromatrix &\zeromatrix\\
                        \end{smatrix}\\
                    &\textoverset{($\ast$)}{=}
                        &(C_{K}^{\ast}C_{K} - V_{K})
                        +
                        \Big(
                            \begin{smatrix}
                                \zeromatrix &D_{\alpha}\\
                                \zeromatrix &\zeromatrix\\
                            \end{smatrix}^{\ast}
                            C_{K}
                            - \zeromatrix
                        \Big)
                        +
                        \Big(
                            C_{K}^{\ast}
                            \begin{smatrix}
                                \zeromatrix &D_{\alpha}\\
                                \zeromatrix &\zeromatrix\\
                            \end{smatrix}
                            - \zeromatrix
                        \Big)\\
                    &=
                        &(C_{K}^{\ast}C_{K} - V_{K})
                        +
                        \begin{smatrix}
                            \zeromatrix &D_{\alpha}\\
                            \zeromatrix &\zeromatrix\\
                        \end{smatrix}^{\ast}
                        C_{K}
                        +
                        C_{K}^{\ast}
                        \begin{smatrix}
                            \zeromatrix &D_{\alpha}\\
                            \zeromatrix &\zeromatrix\\
                        \end{smatrix}\\
                    &=
                        &\Big(
                            C_{K}
                            +
                            \begin{smatrix}
                                \zeromatrix &D_{\alpha}\\
                                \zeromatrix &\zeromatrix\\
                            \end{smatrix}
                        \Big)^{\ast}
                        \Big(
                            C_{K}
                            +
                            \begin{smatrix}
                                \zeromatrix &D_{\alpha}\\
                                \zeromatrix &\zeromatrix\\
                            \end{smatrix}
                        \Big)
                        - V_{K}
                        - \begin{smatrix}
                            \zeromatrix &D_{\alpha}\\
                            \zeromatrix &\zeromatrix\\
                        \end{smatrix}^{\ast}
                        \begin{smatrix}
                            \zeromatrix &D_{\alpha}\\
                            \zeromatrix &\zeromatrix\\
                        \end{smatrix}\\
                    &=
                        &C_{K \cup \{\alpha\}}^{\ast}C_{K \cup \{\alpha\}}
                        - \Big(
                            V_{K}
                            +
                            \begin{smatrix}
                                \zeromatrix &\zeromatrix\\
                                \zeromatrix &D_{\alpha}^{\ast}D_{\alpha}\\
                            \end{smatrix}
                        \Big)\\
                    &=
                        &C_{K \cup \{\alpha\}}^{\ast}
                        C_{K \cup \{\alpha\}}
                        - V_{K \cup \{\alpha\}},\\
            \end{longmaths}

        \continueparagraph
        where the cancellations in ($\ast$) hold by applying
        \eqcref{eq:0:\beweislabel} to
            $\brkt{D}_{K}$,
            $D_{\alpha}$,
            and
            $\brkt{\abs{D}^{2}}_{K}$.
        This computation shows that
            \eqcref{eq:form-of-dissipation-operators:sig:article-dilation-problem-raj-dahya} holds for $K\cup\{\alpha\}$.
        It follows by induction that \eqcref{eq:form-of-dissipation-operators:sig:article-dilation-problem-raj-dahya}
        holds for all $K \subseteq \{1,2,\ldots,d\}$.

        Let $K \subseteq \{1,2,\ldots,d\}$ be arbitrary.
        Observe that
            $C_{K} \in \AlgebraUpperTr_{1}$,
        and thus by the above discussions, $C_{K}$ is invertible.
        By \eqcref{eq:form-of-dissipation-operators:sig:article-dilation-problem-raj-dahya}
        one has
            $S_{T,K} = C_{K}^{\ast}(\onematrix - V_{K})C_{K}$.
        Since $S_{T,K}$ and ${\onematrix - V_{K}}$ are self-adjoint
        and $C_{K}$ is invertible,
        it follows that the signs of the minimal spectral values
        of $S_{T,K}$ and ${\onematrix - V_{K}}$ coincide.\footnote{
            Let $\cal{A}$ be a unital $C^{\ast}$-algebra,
            $a\in\cal{A}$ be self-adjoint,
            and $c\in\cal{A}$ be invertible.
            Set
                ${\lambda  \colonequals \min(\opSpectrum{a})}$,
                ${\lambda^{\prime} \colonequals \min(\opSpectrum{c^{\ast}ac})}$,
                ${r \colonequals \min(\opSpectrum{c^{\ast}c}) > 0}$,
                and
                ${r^{\prime} \colonequals \min(\opSpectrum{(c^{-1})^{\ast}c^{-1}}) > 0}$.
            Then
                $%
                    c^{\ast} a c
                    \geq
                        c^{\ast}\cdot \lambda \onematrix \cdot c
                    =
                        \lambda c^{\ast}c
                    \geq
                        \lambda r\onematrix
                $
            so $\lambda^{\prime} = \min(\opSpectrum{c^{\ast}ac}) \geq \lambda r$.
            Since $a = (c^{-1})^{\ast}(c^{\ast} a c)c^{-1}$,
            one similarly obtains
                $\lambda \geq \lambda^{\prime}r^{\prime}$.
            Since $r,r^{\prime}>0$, it follows that $\signum{\lambda}=\signum{\lambda^{\prime}}$.
        }
        To compute the latter observe that
            $%
                \opSpectrum{\onematrix - V_{K}}
                = 1 - \opSpectrum{V_{K}}
                = 1 - \{0\} \cup \opSpectrum{\brkt{\card{D}^{2}}_{K}}
                = \{1\} \cup (1 - \opSpectrum{\brkt{\card{D}^{2}}})
            $,
        whence by the positivity of
        ${%
            \brkt{\card{D}^{2}}_{K}
            = \sum_{i \in K}D_{i}^{\ast}D_{i}
        }$
        the spectral theorem yields

            \begin{maths}[mc]{c}
                \min\opSpectrum{\onematrix - V_{K}}
                    = 1 - \max(\opSpectrum{\brkt{\card{D}^{2}}_{K}})
                    = 1 - \norm{\brkt{\card{D}^{2}}_{K}}
                    = 1 - \normLong{
                        \displaystyle
                        \sum_{i \in K}D_{i}^{\ast}D_{i}
                    }.
            \end{maths}

        For each $i\in\{1,2,\ldots,d\}$
        we now choose $D_{i}=\alpha V_{i}$
        where $V_{i}$ is an isometry and $\alpha\in\complex$
        with
            $%
                \abs{\alpha}
                \in (\frac{1}{\sqrt{d}},\:\frac{1}{\sqrt{d-1}})
                \subseteq [0, 1]
            $.
        This can be achieved in the above construction,
        as we simply required that each
            $D_{i}\in\BoundedOps{\HilbertRaum_{2}}{\HilbertRaum_{1}}$
        be contractive and since $0 < \dim(\HilbertRaum_{2}) \leq \dim(\HilbertRaum_{1})$.
        For $K\subseteq\{1,2,\ldots,d\}$
        our choice yields
            $%
                \norm{\sum_{i \in K} D_{i}^{\ast}D_{i}}
                = \norm{\sum_{i \in K} \abs{\alpha}^{2}\onematrix}
                = \abs{\alpha}^{2}\card{K}
            $
        and thus by the above computation
            $%
                \signum{\min(\opSpectrum{S_{T,K}})}
                = \signum{\min(\opSpectrum{\onematrix - V_{K}})}
                = \signum{1 - \abs{\alpha}^{2}\card{K}}
            $.
        By our choice of $\alpha$ we have that
            $\signum{\min(\opSpectrum{S_{T,K}})} = +1$
        for $K\subsetneq\{1,2,\ldots,d\}$
        and $\signum{\min(\opSpectrum{S_{T,K}})} = -1$
        for $K=\{1,2,\ldots,d\}$.
        In particular,
            $%
                \beta_{T}
                =
                    \min_{K\subseteq\{1,2,\ldots,d\}}
                    \min(\opSpectrum{S_{T,K}})
                < 0
            $.

        For any such construction,
        one thus has that $T$ satisfies all the assumptions
        and $\beta_{T} < 0$.
        That is, the generators of $T$ are not completely dissipative,
        and hence by \Cref{thm:classification:sig:article-dilation-problem-raj-dahya}
        $T$ has no regular unitary dilation.
    \end{proof}

\begin{rem}
    The above construction satisfied
        $\min(\opSpectrum{S_{T,K}}) \geq 0$
        for $\card{K} < d$
        and
        $\min(\opSpectrum{S_{T,K}}) < 0$
        for $\card{K} = d$.
    This shows that,
    to determine whether a $d$-parameter $\Cnought$-semigroup $T$
    has completely dissipative generators,
    the higher order dissipation operators are not redundant.
\end{rem}

\begin{rem}
\makelabel{rem:regular-dilation-stronger-than-unitary-dilation:sig:article-dilation-problem-raj-dahya}
    It is well known that all $2$-parameter contractive $\Cnought$-semigroups
    have unitary dilations (\cf
        \cite{Slocinski1974},
        \cite[Theorem~2]{Slocinski1982},
        and
        \cite[Theorem~2.3]{Ptak1985}%
    ).
    Thus \Cref{prop:non-doubly-comm-example:sig:article-dilation-problem-raj-dahya} confirms that
        the existence of regular unitary dilations
        is a strictly stronger condition than
        the existence of unitary dilations.
\end{rem}




\section[Application]{Application to the von Neumann polynomial inequality}
\label{sec:applications}


\firstparagraph
We conclude this paper with an application of the complete dissipativity condition
introduced in this paper.
When studying the aspects of multiple operators,
it is natural to consider algebraic combinations and their bounds.

\begin{defn}
    Let $S_{1},S_{2},\ldots,S_{d} \in \BoundedOps{\HilbertRaum}$ be commuting operators.
    We define the map
        $%
            \complex[X_{1},X_{1}^{-1},X_{2},X_{2}^{-1},\ldots,X_{d},X_{d}^{-1}] \ni p
            \mapsto p(S_{1},S_{2},\ldots,S_{d}) \in \BoundedOps{\HilbertRaum}
        $
    as the unique linear map satisfying
        $%
            p(S_{1},S_{2},\ldots,S_{d})
            \colonequals
            (\prod_{i\in\supp(\mathbf{n}^{-})}S_{i}^{-n_{i}})^{\ast}
            (\prod_{i\in\supp(\mathbf{n}^{+})}S_{i}^{n_{i}})
        $
    for all monomials of the form
        $p = \prod_{i=1}^{d}X_{i}^{n_{i}}$
    where $\mathbf{n} \in \integers^{d}$.
    We refer to this map as the \highlightTerm{regular polynomial evaluation}.
\end{defn}

It is easy to see that the commutativity of the operators guarantees
that regular polynomial evaluation is well-defined and linear.
Note that, unless the operators are normal
(which by Fuglede's theorem implies double commutativity),
regular polynomial evaluation is not multiplicative.
We may also consider $\complex[X_{1},X_{2},\ldots,X_{d}]$
as a subalgebra of the ring $\complex[X_{1},X_{1}^{-1},X_{2},X_{2}^{-1},\ldots,X_{d},X_{d}^{-1}]$.
The restriction of the above map to $\complex[X_{1},X_{2},\ldots,X_{d}]$
yields the usual \highlightTerm{polynomial evaluation} for tuples of operators.

Now in
    \cite{vonNeumann1951specThmContractions},
    \cite[Proposition~I.8.3 and Notes, p.~54]{Nagy1970},
    \cite[Theorem~1.2]{Pisier2001bookCBmaps},
    and
    \cite[Theorem~1.1]{Shalit2021DilationBook}
the following bound is proved for
commuting $d$-tuples $(S_{1},S_{2},\ldots,S_{d})$ of contractions
and $p\in\complex[X_{1},X_{2},\ldots,X_{d}]$:

    \begin{maths}[mc]{c}
        \norm{p(S_{1},S_{2},\ldots,S_{d})}
            \leq
                \displaystyle
                \sup_{\boldsymbol{\lambda}\in\Torus^{d}}
                    \abs{p(\lambda_{1},\lambda_{2},\ldots,\lambda_{d})},\\
    \end{maths}

\continueparagraph
provided $d\in\{1,2\}$.
In \cite[pp.~488--489]{Parrott1970counterExamplesDilation} an example of ${d=3}$ commuting contractions
is provided, which satisfies the above bounds,
but admits no \highlightTerm{simultaneous unitary power dilation}.\footnote{
    A tuple
        $(S_{1},S_{2},\ldots,S_{d}) \in \BoundedOps{\HilbertRaum}$
    of commuting operators,
    is said to have a \highlightTerm{simultaneous unitary power dilation},
    if a Hilbert space $\HilbertRaum^{\prime}$ exists,
    as well as $d$ commuting unitaries
        $(U_{1},U_{2},\ldots,U_{d}) \in \BoundedOps{\HilbertRaum^{\prime}}$
    and $r\in\BoundedOps{\HilbertRaum}{\HilbertRaum^{\prime}}$ (necessarily isometric),
    such that
        $\prod_{i=1}^{d}S_{i}^{n_{i}} = r^{\ast}(\prod_{i=1}^{d}U_{i}^{n_{i}})r$
    for all $\mathbf{n}=(n_{i})_{i=1}^{d}\in\naturalsZero^{d}$.
}
The proper correspondence between polynomial bounds and unitary dilations
can be found in \cite[Corollary~4.9]{Pisier2001bookCBmaps} (see also Corollary~4.13 in this book).
There it is shown (in particular for $d \geq 3$)
that a simultaneous unitary power dilation exists
if and only if
a strengthened version of the above inequalities hold
with $\complex$-valued polynomials replaced by
    ${n \times n}$ \emph{matrices} $(p_{ij})_{ij}$ with polynomial entries
    for all $n\in\naturalsPos$
and norms/absolute values replaced by
    appropriate matrix norms.

Further interesting connections can be found in recent literature.
For example, in \cite{Arcozzi2021vNpoly}
the authors define the notion of \emph{Siegel-dissipativity}
(which bears no relation to complete dissipativity),
and show that this condition suffices for a certain variation of the above bounds.
And in \cite{Hartz2021dilationFiniteDim,Hartz2022vNrowcontractions} finite dimensional dilations for operator systems
and polynomial bounds for finite dimensional commuting matrices
are investigated.



Moving to the continuous setting of multi-parameter (contractive) $\Cnought$-semigroups,
we can define a similar problem.

\begin{defn}
\makelabel{defn:polynomial-inequalities-semigroups:sig:article-dilation-problem-raj-dahya}
    Let $T$ be a $d$-parameter $\Cnought$-semigroup over $\HilbertRaum$.
    Say that $T$ satisfies \highlightTerm{polynomial bounds} if

    \begin{maths}[mc]{c}
    \eqtag[eq:vN-polynomial-inequality:sig:article-dilation-problem-raj-dahya]
        \norm{p(T_{1}(t_{1}),T_{2}(t_{2}),\ldots,T_{d}(t_{d}))}
            \leq
                \displaystyle
                \sup_{\boldsymbol{\lambda}\in\Torus^{d}}
                    \abs{p(\lambda_{1},\lambda_{2},\ldots,\lambda_{d})}\\
    \end{maths}

    \continueparagraph
    for all polynomials $p \in \complex[X_{1},X_{2},\ldots,X_{d}]$
    and all $\mathbf{t}=(t_{i})_{i=1}^{d}\in\realsNonNeg^{d}$.
    We say $T$ satisfies
        \highlightTerm{regular polynomial bounds},
    if the above holds
        for all $p \in \complex[X_{1},X_{1}^{-1},X_{2},X_{2}^{-1},\ldots,X_{d},X_{d}^{-1}]$
        and all $\mathbf{t} \in \realsNonNeg^{d}$.
    We say that $T$ satisfies
        \highlightTerm{regular polynomial bounds in a neighbourhood of $\zerovector$},
    if for some neighbourhood $W\subseteq\realsNonNeg^{d}$ of $\zerovector$,
    the above holds
        for all $p \in \complex[X_{1},X_{1}^{-1},X_{2},X_{2}^{-1},\ldots,X_{d},X_{d}^{-1}]$
        and all $\mathbf{t} \in W$.

    The \highlightTerm{%
        (regular) von Neumann polynomial inequality problem
        for $d$-parameter $\Cnought$-semigroups%
    }
    shall refer to determining whether
    $T$ satisfies (regular) polynomial bounds
    for all $d$-parameter contractive $\Cnought$-semigroups $T$ over a Hilbert space $\HilbertRaum$.
\end{defn}



Clearly, satisfaction of regular polynomial bounds
implies satisfaction of polynomial bounds.
Simple examples of the stronger condition are as follows:

\begin{prop}
\makelabel{prop:unitary-semigroups-satisfy-regular-vN-polynomial-inequalities:sig:article-dilation-problem-raj-dahya}
    Every $d$-parameter unitary $\Cnought$-semigroup $U$ over $\HilbertRaum$
    satisfies regular polynomial bounds.
\end{prop}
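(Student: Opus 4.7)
The plan is to use the Gelfand representation of the commutative unital $C^{\ast}$-algebra generated by the relevant unitaries and reduce the operator norm to a supremum of scalar magnitudes on $\Torus^{d}$.

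First, fix an arbitrary $\mathbf{t}=(t_{i})_{i=1}^{d}\in\realsNonNeg^{d}$ and let $\cal{A}\subseteq\BoundedOps{\HilbertRaum}$ denote the commutative unital $C^{\ast}$-algebra generated by the set $\{U_{1}(t_{1}),U_{2}(t_{2}),\ldots,U_{d}(t_{d})\}$; commutativity follows because $U$ is a multi-parameter semigroup, so the marginals commute. By Gelfand--Naimark, $\cal{A}$ is isometrically $\ast$-isomorphic to $C(\Sigma)$ for some compact Hausdorff space $\Sigma$, via $a\mapsto\hat{a}$. Since each $U_{i}(t_{i})$ is unitary, its Gelfand transform $\hat{U}_{i}(t_{i})\in C(\Sigma)$ takes values in $\Torus$.

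Next, I would verify that the regular polynomial evaluation respects the Gelfand transform in the natural way. For a monomial $p=\prod_{i=1}^{d}X_{i}^{n_{i}}$ with $\mathbf{n}\in\integers^{d}$, expand
\begin{maths}[mc]{rcl}
    \widehat{p(U_{1}(t_{1}),\ldots,U_{d}(t_{d}))}(\sigma)
    &= &\Big(\displaystyle\prod_{i\in\supp(\mathbf{n}^{-})}\hat{U}_{i}(t_{i})(\sigma)^{-n_{i}}\Big)^{\ast}\Big(\displaystyle\prod_{i\in\supp(\mathbf{n}^{+})}\hat{U}_{i}(t_{i})(\sigma)^{n_{i}}\Big)\\
    &= &\displaystyle\prod_{i=1}^{d}\hat{U}_{i}(t_{i})(\sigma)^{n_{i}},
\end{maths}
where the last equality uses that complex conjugation of elements of $\Torus$ coincides with inversion (so $\overline{z^{-n_{i}}}=z^{n_{i}}$ for $z\in\Torus$ and $n_{i}<0$). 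By linearity, for a general $p\in\complex[X_{1},X_{1}^{-1},\ldots,X_{d},X_{d}^{-1}]$, the Gelfand transform of $p(U_{1}(t_{1}),\ldots,U_{d}(t_{d}))$ is the continuous function $\sigma\mapsto p(\hat{U}_{1}(t_{1})(\sigma),\ldots,\hat{U}_{d}(t_{d})(\sigma))$, where the Laurent polynomial $p$ is evaluated in the ordinary scalar sense (legitimate because each $\hat{U}_{i}(t_{i})(\sigma)\in\Torus\subseteq\complex^{\times}$).

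Finally, taking norms and using that the Gelfand transform is an isometry yields
\begin{maths}[mc]{c}
    \norm{p(U_{1}(t_{1}),\ldots,U_{d}(t_{d}))}
    =\displaystyle\sup_{\sigma\in\Sigma}\abs{p(\hat{U}_{1}(t_{1})(\sigma),\ldots,\hat{U}_{d}(t_{d})(\sigma))}
    \leq\displaystyle\sup_{\boldsymbol{\lambda}\in\Torus^{d}}\abs{p(\lambda_{1},\ldots,\lambda_{d})},
\end{maths}
since the tuple $(\hat{U}_{1}(t_{1})(\sigma),\ldots,\hat{U}_{d}(t_{d})(\sigma))$ lies in $\Torus^{d}$ for each $\sigma\in\Sigma$. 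As $\mathbf{t}$ and $p$ were arbitrary, $U$ satisfies the regular polynomial bounds.

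The only nontrivial step is verifying that regular polynomial evaluation is compatible with the Gelfand transform for monomials containing negative exponents; this is where unitarity is genuinely used, since it ensures both invertibility ($\hat{U}_{i}(t_{i})$ is non-vanishing) and the identification of the adjoint with inversion on the spectrum. Everything else is a direct application of the Gelfand--Naimark theorem.
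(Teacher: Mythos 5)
Your proof is correct and follows essentially the same route as the paper: the paper simultaneously diagonalises the commuting unitaries $U_{i}(t_{i})$ as multiplication operators $M_{e^{\imagunit\theta_{i}}}$ via the spectral theorem and then reads off the norm as an $L^{\infty}$-supremum over torus-valued functions, which is exactly your Gelfand-transform argument in its concrete form (your version just stops at the abstract isometric isomorphism with $C(\Sigma)$, which suffices since only norms are needed). The one point you gloss over --- that the generated $C^{\ast}$-algebra is commutative, i.e.\ that the adjoints also commute with everything --- is immediate for unitaries since $U_{j}(t_{j})^{\ast}=U_{j}(t_{j})^{-1}$, so there is no gap.
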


    \begin{proof}
        Let $p \in \complex[X_{1},X_{1}^{-1},X_{2},X_{2}^{-1},\ldots,X_{d},X_{d}^{-1}]$
            and
            $\mathbf{t}=(t_{i})_{i=1}^{d}\in\realsNonNeg^{d}$
        be arbitrary and fixed.
        Since $\{ U_{i}(t_{i}) \mid i \in \{1,2,\ldots,d\} \}$
        are doubly commuting unitary operators,
        one can apply the spectral mapping theorem for commutative $C^{\ast}$-algebras
        to simultaneously diagonalise these to multiplication operators over a semi-finite measure space
        (%
            see
            \cite[Theorem~1.3.6]{Murphy1990},
            \cite[3.3.1 and 3.4.1]{Pedersen2018cstaralg},
            and
            \cite{Haase2020spectralTheory}%
        ).
        That is, one can find
            a semi-finite measure space $(X,\mu)$,
            measurable $\reals$-valued functions
            $\theta_{1},\theta_{2},\ldots,\theta_{d}\in L^{\infty}(X,\mu)$,
            and
            a unitary operator $u\in\BoundedOps{\HilbertRaum}{L^{2}(X,\mu)}$,
        such that
            $%
                U_{i}(t_{i}) = u^{\ast} M_{e^{\imagunit \theta_{i}(\cdot)}} u
            $
            for all $i\in\{1,2,\ldots,d\}$.
        One can readily verify that
            $
                p(U_{1}(t_{1}),U_{2}(t_{2}),\ldots,U_{d}(t_{d}))
                = u^{\ast} M_{
                    p(%
                        e^{\imagunit \theta_{1}(\cdot)},
                        e^{\imagunit \theta_{2}(\cdot)},
                        \ldots
                        e^{\imagunit \theta_{d}(\cdot)}%
                    )
                } u
            $
        and thus

            \begin{maths}[mc]{rcl}
                \norm{p(U_{1}(t_{1}),U_{2}(t_{2}),\ldots,U_{d}(t_{d}))}
                    &=
                        &\norm{%
                            M_{
                                p(%
                                    e^{\imagunit \theta_{1}(\cdot)},
                                    e^{\imagunit \theta_{2}(\cdot)},
                                    \ldots
                                    e^{\imagunit \theta_{d}(\cdot)}%
                                )
                            }
                        }_{L^{2}(X,\mu)}\\
                    &=
                        &\norm{%
                            p(%
                                e^{\imagunit \theta_{1}(\cdot)},
                                e^{\imagunit \theta_{2}(\cdot)},
                                \ldots
                                e^{\imagunit \theta_{d}(\cdot)}%
                            )
                        }_{L^{\infty}(X,\mu)}\\
                    &\leq
                        &\displaystyle
                        \sup_{\boldsymbol{\lambda}\in\Torus^{d}}
                            \abs{p(\lambda_{1}, \lambda_{2}, \ldots \lambda_{d})}.\\
            \end{maths}

        \continueparagraph
        Thus $U$ satisfies regular polynomial bounds.
    \end{proof}

\begin{prop}
\makelabel{prop:regular-unitary-dilation-satisfy-regular-vN-polynomial-inequalities:sig:article-dilation-problem-raj-dahya}
    Let $T$ be a $d$-parameter $\Cnought$-semigroup over $\HilbertRaum$.
    If $T$ has a regular unitary dilation,
    then it satisfies regular polynomial bounds.
\end{prop}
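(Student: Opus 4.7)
[of \Cref{prop:regular-unitary-dilation-satisfy-regular-vN-polynomial-inequalities:sig:article-dilation-problem-raj-dahya}]
The plan is to directly exploit the defining property of a regular unitary dilation in order to reduce the bound to the unitary case already handled in
    \Cref{prop:unitary-semigroups-satisfy-regular-vN-polynomial-inequalities:sig:article-dilation-problem-raj-dahya}.
Let $(U,\HilbertRaum^{\prime},r)$ be the regular unitary dilation of $T$ as in
    \Cref{defn:dilation-regular:sig:article-dilation-problem-raj-dahya},
and let $U_{1},U_{2},\ldots,U_{d}$ denote the marginals of $U$,
which are unitary $\Cnought$-semigroups that extend uniquely to $\topSOT$-continuous unitary representations of $(\reals,+,0)$.
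Fix $\mathbf{t}=(t_{i})_{i=1}^{d}\in\realsNonNeg^{d}$.

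The first (and central) step will be to verify the operator identity
    $
        p(T_{1}(t_{1}),\ldots,T_{d}(t_{d}))
            = r^{\ast}p(U_{1}(t_{1}),\ldots,U_{d}(t_{d}))r
    $
for every $p\in\complex[X_{1},X_{1}^{-1},\ldots,X_{d},X_{d}^{-1}]$.
By linearity of both sides, it suffices to check this on monomials $p=\prod_{i=1}^{d}X_{i}^{n_{i}}$
with $\mathbf{n}=(n_{i})_{i=1}^{d}\in\integers^{d}$.
Setting $\mathbf{s}\colonequals(n_{i}t_{i})_{i=1}^{d}\in\reals^{d}$,
one has $\mathbf{s}^{+}=(n_{i}^{+}t_{i})_{i=1}^{d}$ and $\mathbf{s}^{-}=(n_{i}^{-}t_{i})_{i=1}^{d}$
(here we use $t_{i}\geq 0$).
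Using the semigroup law $T_{i}(t_{i})^{n}=T_{i}(nt_{i})$ for $n\in\naturalsZero$
and the unitarity $U_{i}(t_{i})^{-n}=U_{i}(t_{i})^{\ast n}=U_{i}(-nt_{i})$ for $n\in\integers$,
the definition of regular polynomial evaluation yields
    $
        p(T_{1}(t_{1}),\ldots,T_{d}(t_{d}))
        = T(\mathbf{s}^{-})^{\ast}T(\mathbf{s}^{+})
    $
and
    $
        p(U_{1}(t_{1}),\ldots,U_{d}(t_{d}))
        = U(\mathbf{s})
    $.
The regular dilation identity then gives
    $
        T(\mathbf{s}^{-})^{\ast}T(\mathbf{s}^{+})
        = r^{\ast}U(\mathbf{s})r
        = r^{\ast}p(U_{1}(t_{1}),\ldots,U_{d}(t_{d}))r
    $,
as desired.

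With this identity in hand, the remainder is immediate.
Since $r$ is an isometry, one has $\norm{r^{\ast}}=\norm{r}=1$,
and thus
    $
        \norm{p(T_{1}(t_{1}),\ldots,T_{d}(t_{d}))}
        \leq \norm{p(U_{1}(t_{1}),\ldots,U_{d}(t_{d}))}
    $.
Because each $U_{i}$ restricts to a unitary $\Cnought$-semigroup on $\realsNonNeg$,
\Cref{prop:unitary-semigroups-satisfy-regular-vN-polynomial-inequalities:sig:article-dilation-problem-raj-dahya}
applies to $U$ and gives the bound
    $
        \norm{p(U_{1}(t_{1}),\ldots,U_{d}(t_{d}))}
            \leq \sup_{\boldsymbol{\lambda}\in\Torus^{d}}\abs{p(\lambda_{1},\ldots,\lambda_{d})}
    $.
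Combining these two inequalities yields the regular polynomial bound for $T$.
The only non-routine point is the careful tracking of positive and negative parts in the monomial identity;
once this is established, the assertion follows from the isometry of $r$ and the previously proved unitary case.
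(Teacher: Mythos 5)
Your proof is correct and follows essentially the same route as the paper's: reduce to monomials, identify $p(T_{1}(t_{1}),\ldots,T_{d}(t_{d}))$ with $T(\mathbf{s}^{-})^{\ast}T(\mathbf{s}^{+})$ for $\mathbf{s}=(n_{i}t_{i})_{i}$, apply the regular dilation identity, extend by linearity, and conclude via the isometry of $r$ together with the unitary case. No gaps.
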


    \begin{proof}
        For any monomial
            $p = \prod_{i=1}^{d}X_{i}^{n_{i}}$
        with $\mathbf{n} \in \integers^{d}$,
        and for any $\mathbf{t} \in \realsNonNeg^{d}$, one computes

            \begin{maths}[mc]{rcl}
            \eqtag[eq:polynomial-applied-to-semigroup:sig:article-dilation-problem-raj-dahya]
                p(T_{1}(t_{1}),T_{2}(t_{2}),\ldots,T_{d}(t_{d}))
                    &= &(\prod_{i\in\supp(\mathbf{n}^{-})}T_{i}(t_{i})^{-n_{i}})^{\ast}
                        (\prod_{i\in\supp(\mathbf{n}^{+})}T_{i}(t_{i})^{n_{i}})\\
                    &= &(\prod_{i\in\supp(\mathbf{n}^{-})}T_{i}(-n_{i}t_{i}))^{\ast}
                        (\prod_{i\in\supp(\mathbf{n}^{+})}T_{i}(n_{i}t_{i}))\\
                    &= &T(\mathbf{n}^{-}\odot\mathbf{t})^{\ast}T(\mathbf{n}^{+}\odot\mathbf{t})\\
                    &= &T((\mathbf{n}\odot\mathbf{t})^{-})^{\ast}T((\mathbf{n}\odot\mathbf{t})^{+}),\\
            \end{maths}

        \continueparagraph
        where we define $\odot : \reals^{d} \times \reals^{d} \to \reals^{d}$
        via
            $\mathbf{s} \odot \mathbf{s}^{\prime} \colonequals (s_{i}s^{\prime}_{i})_{i=1}^{d}$
        for $\mathbf{s}, \mathbf{s}^{\prime} \in \reals^{d}$.

        By assumption, $T$ has a regular unitary dilation $(U,\HilbertRaum^{\prime},r)$.
        Applying the definitions yields

            \begin{longmaths}[mc]{RCL}
                p(T_{1}(t_{1}),T_{2}(t_{2}),\ldots,T_{d}(t_{d}))
                &= &T((\mathbf{n}\odot\mathbf{t})^{-})^{\ast}T((\mathbf{n}\odot\mathbf{t})^{+})\\
                &= &r^{\ast}U(\mathbf{n}\odot\mathbf{t})r\\
                &= &r^{\ast}
                    U((\mathbf{n}\odot\mathbf{t})^{-})^{\ast}
                    U((\mathbf{n}\odot\mathbf{t})^{+})
                    r\\
                &= &r^{\ast}p(U_{1}(t_{1}),U_{2}(t_{2}),\ldots,U_{d}(t_{d}))r,\\
            \end{longmaths}

        \continueparagraph
        whereby the final equality holds by applying
            \eqcref{eq:polynomial-applied-to-semigroup:sig:article-dilation-problem-raj-dahya}
        to $U$ instead of $T$.
        Since the polynomial map is linear
        and the monomials linearly span
            $\complex[X_{1},X_{1}^{-1},X_{2},X_{2}^{-1},\ldots,X_{d},X_{d}^{-1}]$,
        the above computation implies that

            \begin{maths}[mc]{c}
                p(T_{1}(t_{1}),T_{2}(t_{2}),\ldots,T_{d}(t_{d}))
                = r^{\ast}p(U_{1}(t_{1}),U_{2}(t_{2}),\ldots,U_{d}(t_{d}))r\\
            \end{maths}

        \continueparagraph
        for all $p \in \complex[X_{1},X_{1}^{-1},X_{2},X_{2}^{-1},\ldots,X_{d},X_{d}^{-1}]$
        and all $\mathbf{t} \in \realsNonNeg^{d}$.

        Since $U$ satisfies regular polynomial bounds (see \Cref{prop:unitary-semigroups-satisfy-regular-vN-polynomial-inequalities:sig:article-dilation-problem-raj-dahya})
        and $r \in \BoundedOps{\HilbertRaum}{\HilbertRaum^{\prime}}$
        is an isometry and thus contractive,
        one obtains

            \begin{maths}[mc]{rcl}
                \norm{p(T_{1}(t_{1}),T_{2}(t_{2}),\ldots,T_{d}(t_{d}))}
                    &\leq
                        &\norm{r^{\ast}}
                        \norm{p(U_{1}(t_{1}),U_{2}(t_{2}),\ldots,U_{d}(t_{d}))}
                        \norm{r}\\
                    &\leq
                        &1
                        \cdot \displaystyle
                        \sup_{\boldsymbol{\lambda}\in\Torus^{d}}
                            \abs{p(\lambda_{1},\lambda_{2},\ldots,\lambda_{d})}
                        \cdot 1.\\
            \end{maths}

        Since this holds for all polynomials $p \in \complex[X_{1},X_{1}^{-1},X_{2},X_{2}^{-1},\ldots,X_{d},X_{d}^{-1}]$,
        it follows that $T$ satisfies regular polynomial bounds.
    \end{proof}



Now, to prove
    \eqcref{it:3:thm:regular-polynomial-inequality-iff-completely-dissipative:sig:article-dilation-problem-raj-dahya}%
    ~$\Rightarrow$~%
    \eqcref{it:1:thm:regular-polynomial-inequality-iff-completely-dissipative:sig:article-dilation-problem-raj-dahya}
of \Cref{thm:regular-polynomial-inequality-iff-completely-dissipative:sig:article-dilation-problem-raj-dahya},
observe that the dissipation operators involve expressions of the form
    $
        (\prod_{i \in C_{1}}A^{-}_{i})^{\ast}(\prod_{j\in C_{2}}A^{-}_{j})
    $
for $\isPartition{(C_{1},C_{2})}{K}$ and $K\subseteq\{1,2,\ldots,d\}$,
which, in terms of their indices, are fitting for the kinds of expressions
that arise in regular polynomial evaluations.
However, regular polynomial evaluation captures values of the semigroups and not their generators.
Using appropriate approximation arguments, this issue can nonetheless be overcome.
In this way, one may prove \Cref{thm:regular-polynomial-inequality-iff-completely-dissipative:sig:article-dilation-problem-raj-dahya}.

    \def\beweislabel{thm:regular-polynomial-inequality-iff-completely-dissipative:sig:article-dilation-problem-raj-dahya}
    \begin{proof}[of \Cref{\beweislabel}]
        The direction \hinRichtung{1}{2}
            has already been proved in \Cref{prop:regular-unitary-dilation-satisfy-regular-vN-polynomial-inequalities:sig:article-dilation-problem-raj-dahya}
        and \hinRichtung{2}{3} holds trivially.
        To prove \hinRichtung{3}{1},
        assume that $T$ satisfies regular polynomial bounds in a neighbourhood
            $W\subseteq\realsNonNeg^{d}$
        of $\zerovector$.
        By \Cref{thm:classification:sig:article-dilation-problem-raj-dahya},
        to show that $T$ has a regular unitary dilation,
        it is necessary and sufficient to show that
        the generators of $T$ are completely dissipative.
        To this end fix an arbitrary $K \subseteq \{1,2,\ldots,d\}$.
        We need to show that $S_{T,K} \geq \zeromatrix$.

        Now, for each $i \in K$, we know that
            ${t_{i}^{-1}(\onematrix - T_{i}(t_{i})) \longrightarrow -A_{i}}$
        in norm for ${\realsPos \ni t_{i} \longrightarrow 0}$.
        Thus, defining
        for $\mathbf{t} = (t_{i})_{i=1}^{d} \in\realsNonNeg^{d}$
        the product $t_{K} \colonequals \prod_{i \in K}t_{i}$
        and the operator

            \begin{maths}[mc]{c}
                P(\mathbf{t})
                    \colonequals
                        \displaystyle
                        \sum_{\mathclap{
                            \isPartition{(C_{1},C_{2})}{K}
                        }}
                            \displaystyle
                            \prod_{i \in C_{1}}
                                (\onematrix - T_{i}(t_{i})^{\ast})
                            \cdot
                            \displaystyle
                            \prod_{j \in C_{2}}
                                (\onematrix - T_{j}(t_{j})),\\
            \end{maths}

        \continueparagraph
        one has the norm convergence
            ${\frac{1}{2^{\card{K}}t_{K}}P(\mathbf{t}) \longrightarrow S_{T,K}}$
        for ${\realsPos^{d} \ni \mathbf{t} \longrightarrow 0}$.
        Since the subspace $\BoundedOps{\HilbertRaum}_{\geq 0}$
        of $\BoundedOps{\HilbertRaum}$ consisting of positive operators is norm-closed,
        in order to prove that $S_{T,K} \geq \zeromatrix$,
        it suffices to prove that $P(\mathbf{t}) \geq \zeromatrix$
        for all $\mathbf{t}\in\realsPos^{d}$ sufficiently close to $\zerovector$.

        So fix an arbitrary $\mathbf{t} \in W \cap \realsPos^{d}$.
        We shall show that $P(\mathbf{t}) \geq \zeromatrix$.
        Since $P(\mathbf{t}) \in \BoundedOps{\HilbertRaum}$ is clearly a self-adjoint operator,
        it suffices to prove that $\norm{\onematrix - \alpha \, P(\mathbf{t})} \leq 1$
        for sufficiently small $\alpha > 0$.%
        \footnote{
            Let $\cal{A}$ be a unital $C^{\ast}$-algebra
            and $a\in\cal{A}$ be self-adjoint.
            Let
                ${\lambda_{\min}\colonequals\min(\opSpectrum{a})}$
                and
                ${\lambda_{\max}\colonequals\max(\opSpectrum{a})}$
            and
                ${\alpha^{\ast}\colonequals\frac{1}{\max\{\abs{\lambda_{\min}},\,\abs{\lambda_{\max}}\}+1}}$.
            For $\alpha\in(0,\:\alpha^{\ast})$
            one has
                $\min(\opSpectrum{\onematrix - \alpha \, a}) = 1-\alpha\lambda_{\max}$
                and
                $\max(\opSpectrum{\onematrix - \alpha \, a}) = 1-\alpha\lambda_{\min}$,
            which are both positive by the choice of $\alpha^{\ast}$.
            By the spectral theorem it follows that
                $%
                    \norm{\onematrix - \alpha \, a}
                    = 1 - \alpha\lambda_{\min}
                $,
            which does not exceed $1$ if and only if $\lambda_{\min} \geq 0$.
            Now, $a$ is a positive element
            if and only if
                $\lambda_{\min} \geq 0$,
            which by the preceding argument holds
            if and only if
                $\norm{\onematrix - \alpha \, a} \leq 1$
                for all sufficiently small $\alpha\in\realsPos$.
        }

        Now, applying regular polynomial evaluation to

            \begin{maths}[mc]{c}
                p
                    \colonequals
                        \displaystyle
                        \sum_{\mathclap{
                            \isPartition{(C_{1},C_{2})}{K}
                        }}
                            \displaystyle
                            \prod_{i \in C_{1}}
                                (1 - X_{i}^{-1})
                            \cdot
                            \displaystyle
                            \prod_{j \in C_{2}}
                                (1 - X_{j})\\
            \end{maths}

        \continueparagraph
        yields

            \begin{maths}[mc]{c}
                p(T_{1}(t_{1}),T_{2}(t_{2}),\ldots,T_{d}(t_{d}))
                    = P(\mathbf{t}),\\
            \end{maths}

        \continueparagraph
        and since $T$ satisfies regular polynomial bounds in $W$,
        it follows that

            \begin{maths}[mc]{c}
                \norm{\onematrix - \alpha \, P(\mathbf{t})}
                    =
                        \norm{(1 - \alpha \, p)(T_{1}(t_{1}),T_{2}(t_{2}),\ldots,T_{d}(t_{d}))}
                    \leq
                        \displaystyle
                        \sup_{\boldsymbol{\lambda} \in \Torus^{d}}
                            \abs{1 - \alpha \, p(\lambda_{1},\lambda_{2},\ldots,\lambda_{d})}\\
            \end{maths}

        \continueparagraph
        for $\alpha > 0$.
        Thus, to prove that $P(\mathbf{t}) \geq \zeromatrix$,
        it suffices to prove for sufficiently small $\alpha > 0$,
        that
            $\abs{1 - \alpha \, p(\lambda_{1},\lambda_{2},\ldots,\lambda_{d})} \leq 1$
        for all $\boldsymbol{\lambda} \in \Torus^{d}$.
        This in turn holds if $p$,
        when viewed as a function ${p:\Torus^{d}\to\complex}$,
        is bounded, $\reals$-valued, and non-negative.
        Indeed, for each $\boldsymbol{\lambda} \in \Torus^{d}$

            \begin{maths}[mc]{rcl}
                p(\lambda_{1},\lambda_{2},\ldots,\lambda_{d})
                    &= &\displaystyle
                        \sum_{\isPartition{(C_{1},C_{2})}{K}}
                            \displaystyle
                            \prod_{i \in C_{1}}
                                (1 - \lambda_{i}^{-1})
                            \cdot
                            \displaystyle
                            \prod_{j \in C_{2}}
                                (1 - \lambda_{j})\\
                    &= &\displaystyle
                        \prod_{i \in K}
                        \underbrace{
                            \Big(
                                (1 - \lambda_{i}^{-1})
                                +
                                (1 - \lambda_{i})
                            \Big)
                        }_{
                            = 2\Re (1-\lambda_{i})
                            \in [0,\:4]
                        }
                    \:\in\:[0,\:4^{\card{K}}],\\
            \end{maths}

        \continueparagraph
        which completes the proof.
    \end{proof}

\begin{rem}
    The use of the complete dissipativity condition was brought to bear in the above argument.
    If one did not work with this concept, one might attempt to prove that
    satisfaction of regular polynomial bounds
    implies the existence of a regular unitary dilation,
    by showing that the Brehmer positivity criterion holds.
    To achieve this, one could attempt to capture the Brehmer operators
    via regular polynomial evaluation and then again rely on polynomial bounds.
    However, the expressions occurring in the Brehmer operators involve monomials of the form
        $
            (\prod_{i \in C}T_{i}(t))^{\ast}(\prod_{i\in C}T_{i}(t))
        $
    for $C\subseteq\{1,2,\ldots,d\}$,
    which do not seem to be expressions that can be captured by regular polynomial evaluation.
\end{rem}

Combining the examples in \S{}\ref{sec:examples:non-doubly-comm}
with \Cref{thm:regular-polynomial-inequality-iff-completely-dissipative:sig:article-dilation-problem-raj-dahya}
we can prove
    \Cref{cor:counter-examples-regular-polynomial-inequality:sig:article-dilation-problem-raj-dahya},
which negatively solves
    the regular von Neumann polynomial problem for multi-parameter $\Cnought$-semigroups:

    \def\beweislabel{cor:counter-examples-regular-polynomial-inequality:sig:article-dilation-problem-raj-dahya}
    \begin{proof}[of \Cref{\beweislabel}]
        By \Cref{prop:non-doubly-comm-example:sig:article-dilation-problem-raj-dahya}
        there exist $d$-parameter contractive $\Cnought$-semigroups
        whose generators are bounded and have strictly negative spectral bounds,
        and which are not completely dissipative.
        By the classification theorem (\Cref{thm:classification:sig:article-dilation-problem-raj-dahya}),
        these have no regular unitary dilations,
        and thus by \Cref{thm:regular-polynomial-inequality-iff-completely-dissipative:sig:article-dilation-problem-raj-dahya}
        do not satisfy regular polynomial bounds.
    \end{proof}

\begin{rem}
    It would be of interest to know whether the characterisation
    in \Cref{thm:regular-polynomial-inequality-iff-completely-dissipative:sig:article-dilation-problem-raj-dahya}
    also holds without the assumption of bounded generators.
    Certainly the implications
        \eqcref{it:1:thm:regular-polynomial-inequality-iff-completely-dissipative:sig:article-dilation-problem-raj-dahya}%
        ~$\Rightarrow$~%
        \eqcref{it:2:thm:regular-polynomial-inequality-iff-completely-dissipative:sig:article-dilation-problem-raj-dahya}%
        ~$\Rightarrow$~%
        \eqcref{it:3:thm:regular-polynomial-inequality-iff-completely-dissipative:sig:article-dilation-problem-raj-dahya}
    hold, as neither
        \Cref{prop:regular-unitary-dilation-satisfy-regular-vN-polynomial-inequalities:sig:article-dilation-problem-raj-dahya}
    nor
        \Cref{prop:unitary-semigroups-satisfy-regular-vN-polynomial-inequalities:sig:article-dilation-problem-raj-dahya}
    required the semigroups to have bounded generators.
    But our proof of the implication
        \eqcref{it:3:thm:regular-polynomial-inequality-iff-completely-dissipative:sig:article-dilation-problem-raj-dahya}%
        ~$\Rightarrow$~%
        \eqcref{it:1:thm:regular-polynomial-inequality-iff-completely-dissipative:sig:article-dilation-problem-raj-dahya}
    relied heavily on the complete dissipativity condition.
    The latter notion might need to be reformulated for the unbounded setting.
\end{rem}







\firstparagraph
\paragraph{Acknowledgement.}
The author is grateful to
    Tanja Eisner
    for her patience and feedback,
to
    Leonardo Goller
    and
    Rainer Nagel
    for useful discussions,
and to the referee for their constructive feedback.


\bibliographystyle{abbrv}

\begin{thebibliography}{10}

\bibitem{Ando1963pairContractions}
T.~And\^{o}.
\newblock {On a pair of commutative contractions}.
\newblock {\em Acta Sci. Math. (Szeged)}, 24:88--90, 1963.

\bibitem{Arcozzi2021vNpoly}
N.~Arcozzi, N.~Chalmoukis, A.~Monguzzi, M.~M. Peloso, and M.~Salvatori.
\newblock {The Drury-Arveson space on the Siegel upper half-space and a von
  Neumann type inequality}.
\newblock {\em Integral Equations Operator Theory}, 93(6):Paper No. 59, 22,
  2021.

\bibitem{Averson2010DilationOverview}
W.~Arveson.
\newblock {Dilation theory yesterday and today}.
\newblock In {\em {A glimpse at Hilbert space operators}}, volume 207 of {\em
  {Oper. Theory Adv. Appl.}}, pages 99--123. Birkh\"{a}user Verlag, Basel,
  2010.

\bibitem{Barik2022}
S.~Barik and B.~K. Das.
\newblock {Isometric dilations of commuting contractions and Brehmer
  positivity}.
\newblock {\em Complex Anal. Oper. Theory}, 16(5):Paper No. 69, 25, 2022.

\bibitem{Butzer1967semiGrApproximationsBook}
P.~L. Butzer and H.~Berens.
\newblock {\em {Semi-groups of operators and approximation}}.
\newblock {Die Grundlehren der mathematischen Wissenschaften, Band 145}.
  Springer-Verlag New York, Inc., New York, 1967.

\bibitem{EngelNagel2000semigroupTextBook}
K.-J. Engel and R.~Nagel.
\newblock {\em {One-parameter semigroups for linear evolution equations}},
  volume 194 of {\em {Graduate Texts in Mathematics}}.
\newblock Springer-Verlag, New York, 2000.

\bibitem{Fagnola2006bookQSDE}
F.~Fagnola.
\newblock {Quantum stochastic differential equations and dilation of completely
  positive semigroups}.
\newblock In {\em {Open quantum systems. II}}, volume 1881 of {\em {Lecture
  Notes in Math.}}, pages 183--220. Springer, Berlin, 2006.

\bibitem{Fagnola2003qsde}
F.~Fagnola and S.~J. Wills.
\newblock {Solving quantum stochastic differential equations with unbounded
  coefficients}.
\newblock {\em J. Funct. Anal.}, 198(2):279--310, 2003.

\bibitem{Folland2015bookHarmonicAnalysis}
G.~B. Folland.
\newblock {\em {A course in abstract harmonic analysis}}.
\newblock Number~29 in {Textbooks in mathematics}. CRC Press, 2 edition, 2015.

\bibitem{Goldstein1985semigroups}
J.~A. Goldstein.
\newblock {\em {Semigroups of linear operators \& applications}}.
\newblock {Oxford Mathematical Monographs}. The Clarendon Press, Oxford
  University Press, New York, 1985.

\bibitem{GomilkoZwart2007invGenRu}
A.~M. Gomilko, K.~Zvart, and Y.~Tomilov.
\newblock {On the inverse operator of the generator of a \(C_{0}\)-semigroup}.
\newblock {\em Mat. Sb.}, 198(8):35--50, 2007.

\bibitem{Gustafson1997numericalRange}
K.~E. Gustafson and D.~K.~M. Rao.
\newblock {\em {Numerical range}}.
\newblock {Universitext}. Springer-Verlag, New York, 1997.

\bibitem{Haase2020spectralTheory}
M.~Haase.
\newblock {The functional calculus approach to the spectral theorem}.
\newblock {\em Indagationes Mathematicae}, 31(6):1066--1098, 2020.

\bibitem{Hartz2021dilationFiniteDim}
M.~Hartz and M.~Lupini.
\newblock {Dilation theory in finite dimensions and matrix convexity}.
\newblock {\em Israel J. Math.}, 245(1):39--73, 2021.

\bibitem{Hartz2022vNrowcontractions}
M.~Hartz, S.~Richter, and O.~M. Shalit.
\newblock {Von Neumann's inequality for row contractive matrix tuples}.
\newblock {\em Math. Z.}, 301(4):3877--3894, 2022.

\bibitem{Izumi2012E0semigroups}
M.~Izumi.
\newblock {\(E_{0}\)-semigroups: around and beyond Arveson's work}.
\newblock {\em J. Operator Theory}, 68(2):335--363, 2012.

\bibitem{Laca2022lcmDilation}
M.~Laca and B.~Li.
\newblock {Dilation theory for right LCM semigroup dynamical systems}.
\newblock {\em J. Math. Anal. Appl.}, 505(2):Paper No. 125586, 37, 2022.

\bibitem{LeMerdy1996DilMultiParam}
C.~Le~Merdy.
\newblock {On dilation theory for \(c_{0}\)-semigroups on Hilbert space}.
\newblock {\em Indiana Univ. Math. J.}, 45(4):945--959, 1996.

\bibitem{Murphy1990}
G.~J. Murphy.
\newblock {\em {C\textsuperscript{\(\ast\)}-algebras and operator theory}}.
\newblock Academic Press, Inc., Boston, MA, 1990.

\bibitem{Parrott1970counterExamplesDilation}
S.~Parrott.
\newblock {Unitary dilations for commuting contractions}.
\newblock {\em Pacific J. Math.}, 34:481--490, 1970.

\bibitem{Pedersen1989analysisBook}
G.~K. Pedersen.
\newblock {\em {Analysis now}}, volume 118 of {\em {Graduate Texts in
  Mathematics}}.
\newblock Springer-Verlag, New York, 1989.

\bibitem{Pedersen2018cstaralg}
G.~K. Pedersen.
\newblock {\em {C\textsuperscript{\(\ast\)}-algebras and their automorphism
  groups}}.
\newblock {Pure and Applied Mathematics (Amsterdam)}. Academic Press, London,
  2018.

\bibitem{Pisier2001bookCBmaps}
G.~Pisier.
\newblock {\em {Similarity problems and completely bounded maps}}, volume 1618
  of {\em {Lecture Notes in Mathematics}}.
\newblock Springer-Verlag, Berlin, expanded edition, 2001.

\bibitem{Ptak1985}
M.~Ptak.
\newblock {Unitary dilations of multi-parameter semi-groups of operators}.
\newblock {\em Annales Polonici Mathematici}, XLV:237--243, 1985.

\bibitem{Shalit2021DilationBook}
O.~M. Shalit.
\newblock {Dilation theory: a guided tour}.
\newblock In {\em {Operator theory, functional analysis and applications}},
  volume 282 of {\em {Oper. Theory Adv. Appl.}}, pages 551--623.
  Birkh\"{a}user/Springer, Cham, 2021.

\bibitem{Shamovich2017dilationsMultiParam}
E.~Shamovich and V.~Vinnikov.
\newblock {Dilations of semigroups of contractions through vessels}.
\newblock {\em Integral Equations Operator Theory}, 87(1):45--80, 2017.

\bibitem{Slocinski1974}
M.~S\l{}oci\'{n}ski.
\newblock {Unitary dilation of two-parameter semi-groups of contractions}.
\newblock {\em Bull. Acad. Polon. Sci. S\'{e}r. Sci. Math. Astronom. Phys.},
  22:1011--1014, 1974.

\bibitem{Slocinski1982}
M.~S\l{}oci\'{n}ski.
\newblock {Unitary dilation of two-parameter semi-groups of contractions II}.
\newblock {\em Zeszyty Naukowe Uniwersytetu Jagiellońskiego}, 23:191--194,
  1982.

\bibitem{Stinespring1955}
W.~F. Stinespring.
\newblock {Positive functions on \(C^{*}\)-algebras}.
\newblock {\em Proc. Amer. Math. Soc.}, 6:211--216, 1955.

\bibitem{Nagy1953}
B.~Sz.-Nagy.
\newblock {\em {Sur les contractions de l'espace de Hilbert}}, volume~15.
\newblock 1953.

\bibitem{Nagy1970}
B.~Sz.-Nagy and C.~Foias.
\newblock {\em {Harmonic Analysis of Operators on Hilbert Space}}.
\newblock North-Holland, 1970.

\bibitem{Tevzadze1999markhov}
R.~Tevzadze.
\newblock {Markov dilation of diffusion type processes and its application to
  the financial mathematics}.
\newblock {\em Georgian Math. J.}, 6(4):363--378, 1999.

\bibitem{vonNeumann1951specThmContractions}
J.~von Neumann.
\newblock {Eine Spektraltheorie f\"{u}r allgemeine Operatoren eines unit\"aren
  Raumes}.
\newblock {\em Math. Nachr.}, 4:258--281, 1951.

\bibitem{Yosida1995fa}
K.~Yosida.
\newblock {\em {Functional analysis}}, volume 123 of {\em {Grundlehren der
  Mathematischen Wissenschaften}}.
\newblock Springer-Verlag, Berlin-New York, sixth edition, 1980.

\bibitem{Zwart2007invGenEn}
H.~Zwart.
\newblock {Is \(A^{-1}\) an infinitesimal generator?}
\newblock In {\em {Perspectives in operator theory}}, volume~75 of {\em {Banach
  Center Publ.}}, pages 303--313. Polish Acad. Sci. Inst. Math., Warsaw, 2007.

\end{thebibliography}
\def\bibname{References}
\bgroup
\footnotesize

\egroup


\addresseshere
\end{document}
